\newcommand\dmf[1]{\qquad\qquad(#1)}
\newcommand\unit{{\mathbf{1}}}
\newcommand\chfc[1]{1_{#1}}
\newcommand\sg[2]{\left(#2_{#1}\right)_{#1\ge0}}
\newcommand\gr[2]{\left(#2_{#1}\right)_{#1\in\mathbb{R}}}
\newcommand\sequ[2]{\left(#2_{#1}\right)_{#1\in\mathbb{N}}}
\renewcommand\d{{\mathrm{d}}}
\newtheorem{theorem}{Theorem}
\newtheorem{proposition}{Proposition}
\newtheorem{definition}{Definition}
\newtheorem{corollary}{Corollary}
\newtheorem{remark}{Remark}
\newtheorem{example}{Example}
\numberwithin{theorem}{section}
\numberwithin{proposition}{section}
\numberwithin{lemma}{section}
\numberwithin{definition}{section}
\numberwithin{corollary}{section}
\numberwithin{example}{section}
\newenvironment{proof}[1][]{\begin{trivlist}\item[\textit{Proof#1.}]}{\rule{0.3em}{2ex}\end{trivlist}}
\newcommand{\set}[1]{\left\{ #1\right\}}
\newcommand{\norm}[1]{\left\| #1\right\|}
\newcommand{\scpro}[2]{\langle #1,#2\rangle}
\newcommand{\map}[3]{ #1\colon #2\mapsto #3}
\newcommand\uvtimes{{\mathbin{\mathop{~\otimes~}\limits_{u,v}}}}
\newcommand{\Utimes}{\mathbin{\mathop{~\otimes~}\nolimits_{\mskip-12mu\U}}}
\newcommand\uvptimes{{\mathbin{\mathop{~\otimes~}\limits_{u',v'}}}}
\renewcommand\uvtimes{{{}_u{\otimes}_{v}}}
\renewcommand\uvptimes{{{}_{u'}{\otimes}_{v'}}}
\newcommand\NRp{{\mathbb{R}_{\ge0}}}
\newcommand\E{\mathcal{E}}
\newcommand\F{\mathcal{F}}
\newcommand\G{\mathcal{G}}
\newcommand\B{\mathcal{B}}
\newcommand\K{\mathcal{K}}
\newcommand\M{\mathcal{M}}
\newcommand\U{\mathcal{U}}
\newcommand\C{\mathcal{C}}
\def\p@equation{{\upshape(\theequation)}\@eqtest}
\def\p@figure{{Fig. \thefigure}\@eqtest}
\def\p@table{{Tab. \thetable}\@eqtest}
\def\p@example{{Example \theexample}\@eqtest}
\def\p@theorem{{Theorem \thetheorem}\@eqtest}
\def\p@proposition{{{Proposition \theproposition}}\@eqtest}
\def\p@lemma{{Lemma  \thelemma}\@eqtest}
\def\p@definition{{Defn.  \thedefinition}\@eqtest}
\def\@eqtest#1{\ifx\csname#1\expandafter\expandafter\expandafter\@gobble
\expandafter\csname\fi}
\def\p@enumi{\labelenumi\expandafter\@gobble}
\def\p@enumii{\labelenumii\expandafter\@gobble}
\def\p@enumiii{\labelenumiii\expandafter\@gobble}
\def\p@enumiv{\labelenumiv\expandafter\@gobble}
\newcommand{\sB}{\mathcal{B}}
\begin{document}
\title{\Large\bfseries The Relation of  Spatial and  Tensor  Product of Arveson Systems --- The Random Set Point of View}
\author{Volkmar Liebscher}
\date{\day09\month09\year2014\relax\today,17:30:21
}

\maketitle
\begin{abstract}
  We characterise the embedding of the spatial product of two Arveson systems into their  tensor product using the random set technique. An important implication is that the spatial tensor product does not depend on the choice of the reference units, i.e. it  is an intrinsic construction.   There is a continuous range of examples coming from the zero sets of Bessel processes where the two products do not coincide. The lattice of all subsystems of the tensor product is analised in different cases. As a by-product, the Arveson systems coming from Bessel zeros prove to be primitive in the sense of \cite{JMP11a}. 
\end{abstract}
\section{Introduction}
In a series of seminal papers in 1989 and 1990, \textsc{Arveson}  associated with every \emph{$E_0$-semigroup} (a semigroup of unital endomorphisms) on $\sB(H)$ its continuous product system of Hilbert spaces, \emph{Arveson system} for short.  Briefly, it is a measurable family of separable Hilbert spaces $\E=\sg t\E$ with an associative identification
\begin{displaymath}
\E_s\otimes\E_t=\E_{s+t}, \qquad s,t\ge0.
\end{displaymath}
\textsc{Arveson}  showed in \cite{Arv89a} that $E_0$-semi\-groups are classified by their Arveson system up to cocycle conjugacy. By a \emph{spatial} Arveson system we understand a pair $(\E,u)$ of an Arveson system $\E$ and a \emph{normalised unit} $u$. The latter  is a measurable section $u=\sg t u$ of unit vectors $u_t\in\E_t$ that factor as
\begin{displaymath}
u_s\otimes u_t=u_{s+t}, \qquad s,t\ge0
\end{displaymath}
with additionally $\norm{u_t}=1$. For a thorough account on Arveson systems we refer to the monograph \cite{Arv03}.

It is known that the  structure of a spatial Arveson system $(\E,u)$ depends on the choice of the reference unit $\sg t u$. In fact, \textsc{Tsirelson} \cite{Tsi08} and \textsc{Markiewicz and Powers} \cite{MP08a}  showed that for an example Arveson system  $\sg t\E$ with  normalised units $\sg t u$ and $ \sg tv$ that there does not exist an automorphism of $\E$ that sends $\sg t u$ to $\sg t v$. Thus we have to  distinguish Arveson systems and spatial Arveson systems carefully. 

  The focus of the present paper is the spatial product of two spatial Arveson systems $(\E,u)$ and $(\F,v)$, which  is formally given by
 \begin{equation}\label{eq:formindlim1}
    (\E\uvtimes{}\F)_t=\mathop{\mathrm{lim}}\limits_{(t_1,\dots,t_n)\in \Pi^t}\bigotimes^{n}_{i=1}((u_t\otimes v_t^\perp)\oplus(\mathbb{C} u_t\otimes v_t)\oplus  (u_t^\perp\otimes v_t)).
  \end{equation}
Here, the limit is taken over finer and finer partitions of $[0,t]$. This is exactly the description of the product system arising from Powers sum of $E_0-$semigroups, see \cite{Ske03c,BLS08}. It also arises as a special case  of inclusion systems \cite{BM09a}. For this structure,  the two units $u$ and $v$ are glued together into one unit of the product. 

Interestingly, \cite{Ske06d} showed that a similar construction works for product systems of Hilbert \emph{modules}, too. This was very important, since  for general product systems of Hilbert modules, the fibrewise tensor product need not yield a product system. Unfortunately, the random set technique used below was not extended to the module situation yet. Thus, we deal here with Arveson systems only. 

Not spatial Arveson systems as such, but also their spatial product depends \textit{a priori} on the choice of the reference units of its factors. This immediately raises the question whether different choices of references units yield isomorphic products or not. In \cite{BLMS11} this question was answered in the affirmative sense. One aim of the present papers is to show how this universality comes quite naturally from the random set point of view on Arveson systems. Only after knowing the result from a former version of the present paper, \cite{BLMS11} achieved the same goal without explicit reference to random sets.

From \ref{eq:formindlim1} it is easy to see that the spatial product is a subsystem of the tensor product system. Nevertheless, the nature of this embedding  is not completely clarified.  Using the random set construction of \cite{Lie09a}, we  characterise here the embedding of the spatial product into the tensor product easily. This random set  structures arise naturally with any embedding  $\G\subseteq\E$ of Arveson systems in the following way. Consider the projections 
\begin{displaymath}
  \mathrm{P}_{s,t}=\unit_{\E_s}\otimes \mathrm{Pr}_{\G_{t-s}}\otimes\unit_{\E_{1-t}}\qquad\in\B(\E_1=\E_s\otimes \E_{t-s}\otimes\E_{1-t})
\end{displaymath}
on $\E_1$. The  fulfil the relation
\begin{displaymath}
   \mathrm{P}_{r,s}\mathrm{P}_{s,t}=\mathrm{P}_{r,t}\qquad0\le r\le s\le t\le1
\end{displaymath}
It was one of the results of \cite{Lie09a}, inspired by \cite{Tsi00},  to give the interesting part of the (normal and separable) representation theory of these relations, identifying the projection $\mathrm{P}_{s,t}$ with multiplication by the $\set{0,1}$-valued random  variable
\begin{displaymath}
  X_{s,t}(Z)=\left\{
    \begin{array}[c]{cl}
      1&Z\cap[s,t]=\emptyset\\
      0&Z\cap[s,t]\ne\emptyset
    \end{array}
\right.
\end{displaymath}
on the space $\mathcal{C}_{[0,1]}=\set{Z\subseteq[0,1]: Z\mbox{~closed} }$ equipped with a suitable probability  measure. Multiplicity is  encoded in a direct integral of Hilbert spaces as usual, see \ref{th:directintegralps} below.  Having the representation for such projections at hand, it is quite easy to compute functions of those projections. In the present situation, we want to compute the projection onto $(\E\uvtimes{}\F)_1$ which characterises $\E\uvtimes{}\F$ completely. A few basic facts about the relevant measures then yield independence of  the construction from the reference units, solving a question raised by Powers in \cite{Pow04a}. This solution was presented also in \cite{BLMS11} with a different proof not using the random set structure explicitly. But that proof, unobviously,  computed just consequences of the random set structure without reference to it.  We hope to convince the reader that using random sets gives a much more clear derivation of the results and that the present paper is worthwhile.  This result would have been  trivial, if   for any pair of normalised units there would exist an isomorphism of the  product systems  mapping one unit to the other. That property was named \emph{amenability} in \cite{Bha00}. But, since \cite{Tsi08,MP08a} we know that there are examples of product systems without this property  and  our result is nontrivial.

  Note that there are  examples that the two products form  nonisomorphic product systems, provided by  \cite{Pow04a} together with \cite{APP05}. Below, another series of examples is provided. Those examples  use  the Arveson  systems coming from the zero sets of Bessel diffusions as introduced already by \textsc{ Tsirelson} \cite{Tsi00}. Those examples are  all of type $\mathrm{II}_0$ but nonisomorphic. As a by-product, we show that those product systems are really primitive in the sense that they contain only trivial subsystems. Thus they are also prime product systems in the sense of \cite{JMP11a}.  Further, spatial products of the Bessel zero Arveson systems  have a quite similar structure, with a rich group of automorphisms, compared  to the behavior of type $\mathrm{I}_1$ Arveson systems under the (spatial) product.  Still, we do not know whether these examples really differ from those in  \cite{Pow04a}.

\paragraph{Acknowledgements} This work began with a RiP stay at the Oberwolfach institute in 2007. The main part  was a completed during a stay at the Universit\`a de Molisse, Campobasso, 2009. Special   thanks to M.Skeide  for the warm hospitality during the latter stay, and for  a lot of discussion and encouragement later as well. Further,  discussions with B.V.R. Bhat, K.Waldorf, D.Markiewicz and  P. Moerters helped a lot to shape this work.

 \section{Continuous product systems of Hilbert spaces}
Let us start with some definitions.
  \begin{definition}
An \emph{Arveson  system} is a measurable family $\E=(\E_t)_{t\ge0}$ of  separable Hilbert spaces  endowed with a measurable family of  
unitaries $V_{s,t}:\E_s\otimes \E_t\mapsto \E_{s+t}$ for all $s,t\ge0$ which fulfils  for all $r,s,t\ge0$
  \begin{displaymath}
    V_{r,s+t}\circ(\unit_{\E_r}\otimes V_{s,t})= V_{r+s,t}\circ( V_{r,s}\otimes\unit_{\E_t}).
  \end{displaymath}
  \end{definition}
  \begin{definition}
    A unit $ u$ of an Arveson  system is a measurable non-zero section $\sg t u$ through $\sg t\E$, which satisfies for all $s,t\ge0$
    \begin{displaymath}
       u_{s+t}= V_{s,t}u_s\otimes  u_t=u_s\otimes u_t.
    \end{displaymath}
If $ u$ is normalised ($\norm{ u_t}=1\forall t\ge0$), the pair $(\E, u)$ is also called \emph{spatial} Arveson system.
For any (spatial) Arveson system $\E$ denote $\U_1(\E)$ the set of all normalised units of $\E$. 
  \end{definition}
  \begin{remark}
   We do not make the definition of  measurability   more explicit throughout this paper. For a  thorough discussion see \cite{Lie09a}, especially section 7 there. Most importantly, 
  by  \cite[Theorem 7.7]{Lie09a} existence of a compatible  measurable structure for an Arveson system is determined by the algebraic structure (given by the family $(V_{s,t})_{0\le s\le t}$) alone. The example Arveson systems introduced below obey that condition.     

Another distinction to \cite{Arv89a} is the inclusion of the trivial  0- and 1-dimensional product systems and of time 0. This way  the order structure of Arveson subsystems becomes simpler.  

In the sequel we drop the operators $V_{s,t}$ whenever there is no loss of  precision. 
  \end{remark}

\begin{definition}
Let additionally $\F$ be another Arveson system with unitaries $(W_{s,t})_{0\le s,t}$.

\begin{enumerate}\item 
  We say that $\theta=(\theta_t)_{t\ge0}$ is an isomorphism of
  product systems if $\theta_t:\E_t\mapsto\F_t$ is a unitary for all $t\ge0$ and for all $s,t\ge0$
  \begin{displaymath}
    \theta_{s+t}\circ V_{s,t}=W_{s,t}\circ(\theta_s\otimes\theta_t).
  \end{displaymath}
If $\F=\E$, $\theta$ is called automorphism.
\item We call $\F$ a subsystem of $\E$ if $\F_t\subseteq\E_t$ for all $t\ge0$ and   $W_{s,t}=V_{s,t}\mathop{|}_{\F_s\otimes\F_t}$  for all $s,t\ge0$.
\end{enumerate}
  \end{definition}
Then $$\mathrm{Aut}(\E)=\set{\theta: \theta\mbox{ is an automorphism of $\E$}}$$ is a group under pointwise composition, called gauge group of $\E$.

According to \cite{Ske06d}, \cite{BLS08} we introduce now another product, the spatial  product of Arveson  systems. For this and further use later, observe by  \cite[Theorem 5.7]{Lie09a} that for an Arveson system $\mathcal {E}$ the set
\begin{displaymath}
  \mathscr{S}(\mathcal{E})=\set{\mathcal {F}:\mathcal{F} \mbox{ is an Arveson  subsystem of }\mathcal{E}}
\end{displaymath}
forms a (complete)  lattice with respect to the fibrewise inclusion order.  Thus  $\E'\vee\F'$ denotes the smallest Arveson subsystem containing both $\E'$ and $\F'$. Under slight abuse of notation, we identify normalised units $u$ with the subsystem $\sg t{\mathbb{C}u}$.

\begin{definition}
Let $(\E,u)$ and $(\F,v)$ be two spatial Arveson systems. We define their\emph{ spatial  product} as   
  \begin{displaymath}
    \E\uvtimes{}\F:=(u\otimes\F)\vee(\E\otimes v)\subseteq\E\otimes\F
  \end{displaymath}
\end{definition}

For a more explicit  definition (see e.g.\ \cite{BLS08}), let   
\begin{displaymath}
  \Pi^t=\set{(t_1,\dots,t_n):n\in \set{1,2,\dots}, t_i>0, t_1+\dots+t_n=t}
\end{displaymath}
denote the set of interval partitions of $[0,t]$ (in a suitable parametrisation).
We order $\Pi^t$ by $(t_1,\dots,t_n)\prec(s_1,\dots,s_m)$ if $n\le m$ and there is a strictly increasing map $\varphi:\set{1,\dots,n,n+1}\mapsto\set{1,\dots,m,m+1}$ with $\varphi(1)=1$, $\varphi(n+1)=m+1$, and
\begin{displaymath}
  t_i=s_{\varphi(i)}+\dots+s_{\varphi(i+1)-1}\forall i=1,\dots,n.
\end{displaymath}

Further, for any vector $w$ in a Hilbert space  denote $w^\perp$ its orthogonal complement. 
\def\citt{\cite[Proposition 2.7]{BLMS11}}
\begin{proposition}[\citt]
\label{prop:inductive limit}
Let $(\E, u)$ and $(\F, v)$ be two spatial Arveson systems. Define Hilbert spaces 
\begin{equation}
\label{eq:inclusionsystem}  G^{u,v}_t= u_t\otimes v_t^\perp\oplus\mathbb{C} u_t\otimes v_t\oplus  u_t^\perp\otimes v_t.
\end{equation}

Then for all $t>0$
\begin{equation}
\label{eq:limitinclusionsystem}
    (\E\uvtimes{}\F)_t=\mathop{\mathrm{lim}}\limits_{(t_1,\dots,t_n)\in \Pi^t}G^{u,v}_{t_1}\otimes G^{u,v}_{t_2}\otimes\dots\otimes G^{u,v}_{t_{n-1}}\otimes G^{u,v}_{t_n}.
  \end{equation}
\end{proposition}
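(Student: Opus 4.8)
The plan is to prove the two Arveson subsystems coincide by a pair of inclusions, after first rewriting the fibre $G^{u,v}_t$ in a more usable form. Observe that $G^{u,v}_t$ is exactly the orthogonal complement of $u_t^\perp\otimes v_t^\perp$ in $\E_t\otimes\F_t$, equivalently the closed linear span of $\set{u_t\otimes\zeta:\zeta\in\F_t}\cup\set{\eta\otimes v_t:\eta\in\E_t}$; in other words $G^{u,v}_t=(\mathbb{C}u_t\otimes\F_t)+(\E_t\otimes\mathbb{C}v_t)$ is precisely the fibrewise sum of the two generating subsystems $u\otimes\F$ and $\E\otimes v$ of the spatial product $\E\uvtimes{}\F=(u\otimes\F)\vee(\E\otimes v)$. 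Write $\G$ for the system defined by the right-hand side of \ref{eq:limitinclusionsystem}; I will show $\G=\E\uvtimes{}\F$.

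First I would establish that $(G^{u,v}_t)_{t>0}$ is an inclusion system, i.e.\ $G^{u,v}_{s+t}\subseteq G^{u,v}_s\otimes G^{u,v}_t$ as subspaces of $\E_{s+t}\otimes\F_{s+t}=(\E_s\otimes\F_s)\otimes(\E_t\otimes\F_t)$. Using the span description together with the factorisations $u_{s+t}=u_s\otimes u_t$ and $v_{s+t}=v_s\otimes v_t$, any generator $u_{s+t}\otimes\zeta$ with $\zeta=\sum_k\zeta^s_k\otimes\zeta^t_k\in\F_s\otimes\F_t$ reorders to $\sum_k(u_s\otimes\zeta^s_k)\otimes(u_t\otimes\zeta^t_k)\in(u_s\otimes\F_s)\otimes(u_t\otimes\F_t)\subseteq G^{u,v}_s\otimes G^{u,v}_t$, and symmetrically for $\eta\otimes v_{s+t}$; since such vectors span $G^{u,v}_{s+t}$, the inclusion follows. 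Consequently, refining a partition can only enlarge the space $\bigotimes_i G^{u,v}_{t_i}$ inside $\E_t\otimes\F_t$, so the inductive limit in \ref{eq:limitinclusionsystem} is realised concretely as the increasing closed union $\G_t=\overline{\bigcup_{(t_1,\dots,t_n)\in\Pi^t}\bigotimes_i G^{u,v}_{t_i}}\subseteq\E_t\otimes\F_t$, with $G^{u,v}_t\subseteq\G_t$ coming from the trivial partition.

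For the inclusion $\G\subseteq\E\uvtimes{}\F$, I would use that $\K:=\E\uvtimes{}\F$ is by definition an Arveson subsystem of $\E\otimes\F$ whose fibres satisfy $\K_t\supseteq\mathbb{C}u_t\otimes\F_t$ and $\K_t\supseteq\E_t\otimes\mathbb{C}v_t$, hence $\K_t\supseteq G^{u,v}_t$ for every $t$; iterating the subsystem identity $\K_{s+t}=\K_s\otimes\K_t$ over a partition gives $\bigotimes_i G^{u,v}_{t_i}\subseteq\bigotimes_i\K_{t_i}=\K_t$, and passing to the closed union yields $\G_t\subseteq\K_t$. For the reverse inclusion I would check that $\G$ is itself a subsystem: the partitions of $[0,s+t]$ having $s$ as a break point are cofinal, and along this cofinal net $\bigotimes G^{u,v}$ factors as $(\bigotimes_{\pi_1}G^{u,v})\otimes(\bigotimes_{\pi_2}G^{u,v})$ with $\pi_1\in\Pi^s$, $\pi_2\in\Pi^t$, so passing to closed unions gives $\G_{s+t}=\G_s\otimes\G_t$. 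Since moreover $\G_t\supseteq G^{u,v}_t$ contains both $\mathbb{C}u_t\otimes\F_t$ and $\E_t\otimes\mathbb{C}v_t$, the subsystem $\G$ contains $u\otimes\F$ and $\E\otimes v$, whence minimality of the join forces $\K=(u\otimes\F)\vee(\E\otimes v)\subseteq\G$. Combining the two inclusions gives $\G_t=\K_t=(\E\uvtimes{}\F)_t$.

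The main obstacle is the inclusion-system property $G^{u,v}_{s+t}\subseteq G^{u,v}_s\otimes G^{u,v}_t$, since everything downstream is lattice bookkeeping once it is in place; the span characterisation of $G^{u,v}_t$ is what renders it transparent. A secondary point requiring care is that the abstract inductive limit in \ref{eq:limitinclusionsystem} genuinely coincides with the concrete increasing union inside $\E_t\otimes\F_t$ (the connecting maps being the concrete inclusions), and that the resulting $\G$ carries a bona fide measurable Arveson structure; the latter is automatic here, because the two inclusions identify $\G$ with the join $\K$, which is an Arveson subsystem of $\E\otimes\F$ by \cite[Theorem 5.7]{Lie09a}.
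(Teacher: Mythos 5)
First, a point of comparison: the paper itself does not prove this proposition at all --- it is imported verbatim from \cite[Proposition 2.7]{BLMS11} --- so there is no internal proof to measure your argument against, and it must be judged on its own merits. Most of it is sound. The rewriting $G^{u,v}_t=(\mathbb{C}u_t\otimes\F_t)+(\E_t\otimes\mathbb{C}v_t)$ is correct, and with it the inclusion-system property $G^{u,v}_{s+t}\subseteq G^{u,v}_s\otimes G^{u,v}_t$; so is the resulting monotonicity under refinement, which lets you realise the inductive limit concretely as the closed increasing union $\G_t\subseteq\E_t\otimes\F_t$; so is the inclusion $\G_t\subseteq\K_t$ for $\K=\E\uvtimes\F$, obtained by iterating the subsystem identity $\K_{s+t}=\K_s\otimes\K_t$ over partitions; and so is the factorisation $\G_{s+t}=\G_s\otimes\G_t$, using cofinality of the partitions having $s$ as a break point.

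The genuine gap is in the last step, and your own closing remark makes it circular. To conclude $\K=(u\otimes\F)\vee(\E\otimes v)\subseteq\G$ from minimality of the join, you must know that $\G$ belongs to the lattice $\mathscr{S}(\E\otimes\F)$ in which that join is formed; by the paper's definition a subsystem is itself an Arveson system, so membership requires the family $(\G_t)_{t\ge0}$ to carry a compatible \emph{measurable} structure, cf.\ \cite[Theorem 5.7]{Lie09a}. You acknowledge this point but then dismiss it as ``automatic because the two inclusions identify $\G$ with the join $\K$'' --- yet the inclusion $\K\subseteq\G$ is exactly the step for which lattice membership of $\G$ is a prerequisite, so the conclusion is being used to justify a step needed to reach it. Nor is the repair a one-liner: the tempting shortcut that partitions with rational (or dyadic) break points form a countable cofinal subfamily is false, since in the refinement order a finer partition must contain every break point of the coarser one, so a partition with an irrational break point is never dominated by a rational one. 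One genuinely needs either a continuity argument (using strong continuity of the projections attached to the measurable units, to show that rational partitions already exhaust $\G_t$) or the automatic-measurability machinery of \cite[Section 7]{Lie09a} to certify $\G\in\mathscr{S}(\E\otimes\F)$. Once that is supplied, your two inclusions combine correctly to give the proposition.
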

\begin{remark}
The work on inclusion systems \cite{BM09a} is a direct generalisation of this inductive limit technique.   
\end{remark}


The   main question now is whether the  inclusion $\E\uvtimes\F\subseteq \E\otimes\F$ might be proper. 
The  answer is reported later.

For any (spatial) Arveson system we introduce its type \textrm{I} part 
\begin{displaymath}
  \E^\U=\bigvee_{u\in\U_1(\E)}u,
\end{displaymath}
the Arveson subsystem generated by its units.  $\E$ is called type \textrm{I}, if $\E=\E^\U$, type \textrm{II} if $\E^\U\ne 0,\E$,  and type \textrm{III} if $\E^\U=0$ or $\U_1(\E)=\emptyset$.  $\E^\U$ is isomorphic to an Arveson system $(\Gamma(L^2([0,t],\K)))_{t\ge0}$ of symmetric Fock spaces  for some separable Hilbert space $\K$ \cite{Arv89a}. $\dim \K$ is an invariant called \emph{index of $\E$}. We subclassify  the types $\mathrm{I},\mathrm{II}$ according to their index. This means, e.g., that for $n\in \mathbb{N}$ an Arveson system of type $\mathrm{I}_n$ is isomorphic to   $((\Gamma(L^2([0,t],\mathbb{C}^n))))_{t\ge0}$ \cite{Arv89a}.  It is easy to see that the index is additive under both the tensor product and the spatial product.   
  \section{Product Systems and Random Sets}
If $\E$ is an Arveson system, there is an important unitary one parameter group  $(\tau_t)_{t\in\mathbb{R}}\subset\B(\E_1)$ acting for $t\in(0,1)$ with regard to  the representations
$\E_{1-t}\otimes\E_t\cong\E_1\cong\E_t\otimes\E_{1-t}$ as \emph{flip}:
\begin{equation}
\label{eq:defflipgroup}
  \tau_t x_{1-t}\otimes x_t=x_t\otimes  x_{1-t}\dmf{x_{1-t}\in\E_{1-t},~ x_t\in\E_t}.
\end{equation}
The operators $\tau_t$ for $t\notin(0,1)$ are obtained by $1-$periodic continuation. These unitaries yield via $\Theta_t(a)=\tau_t^*a\tau_t$, $a\in\B(\E_1) $, a periodic one parameter  automorphism group $\gr t\Theta$ on $\mathcal{B}(\mathcal{E}_1)$.

Observe that any  Arveson  subsystems  $\G$ of an Arveson system $\E$ yields a family  $(\mathrm{P}^\G_{s,t})_{0\le s<t\le1}$ of projections   
\begin{equation}\label{eq:defPst}
  \mathrm{P}^\G_{s,t}=\unit_{\E_s}\otimes\mathrm{Pr}_{\G_{t-s}}\otimes\unit_{\E_{1-t}}\qquad\in\B(\E_1=\E_s\otimes \E_{t-s}\otimes\E_{1-t}).
\end{equation}
This family  fulfils the following relations
\begin{eqnarray*}
  \mathrm{P}^\G_{s,t}\mathrm{P}^\G_{t,u}&=&\mathrm{P}^\G_{s,u}\qquad0\le s\le t\le u\le1\\
\mathrm{P}^\G_{s+u,t+u}&=&\Theta_u(\mathrm{P}^\G_{s,t})\qquad0\le s\le t\le1,-s\le u\le 1-t.
\end{eqnarray*}

The following theorem makes the r\^ole of (distributions of) random sets in Arveson systems apparent. Thereby, let  $\mathcal{C}_{[0,1]}$ denote the space of closed subsets of the unit interval. It is a separable compact space itself, with a corresponding $\sigma$-field of Borel sets. We implicitly assume all probability measures on $\mathcal{C}_{[0,1]}$ to be defined on this $\sigma$-field. 
\def\citt{\cite[Theorem 3.16]{Lie09a}}
\begin{theorem}[\citt]\label{th:existencemuomega}
   Let $\E$ be an Arveson system,   $\omega$ be a faithful  normal state  on  $\B(\E_1)$ and $\G$ be an Arveson  subsystem of $\E$. 

Then there is a unique probability measure  $\mu_\omega$ on  $\mathcal{C}_{[0,1]}$ with  
  \begin{displaymath}
    \label{eq:mueta}
   \mu_\omega(\set{Z:Z\cap\left(\smash{\bigcup_i}[s_i,t_i]\right)=\emptyset})=\omega(\mathrm{P}^\G_{s_1,t_1}\cdots \mathrm{P}^\G_{s_k,t_k})\dmf{0\le s_i<t_i\le1}
  \end{displaymath}

Further, there is a unique    normal isomorphism  $j_\G$, 
\begin{displaymath}
  j_\G:L^\infty(\mu_\omega)\mapsto\set{\smash{\mathrm{P}^\G_{s,t}}:0\le s<t\le1}''\subset\B(\E_1),
\end{displaymath}
with
\begin{displaymath}
  j_\G(\chfc{\set{Z\cap[s,t]=\emptyset}})=\mathrm{P}^\G_{s,t}\dmf{0\le s<t\le1}.
\end{displaymath}
\end{theorem}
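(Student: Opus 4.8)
The plan is to construct the measure $\mu_\omega$ and the isomorphism $j_\G$ together by first building the representation of the abelian von Neumann algebra generated by the projections, and then transporting the spectral measure to $\mathcal{C}_{[0,1]}$. Because the $\mathrm{P}^\G_{s,t}$ commute --- this follows from the semigroup relation $\mathrm{P}^\G_{s,t}\mathrm{P}^\G_{t,u}=\mathrm{P}^\G_{s,u}$ together with the fact that two projections of this form with disjoint or nested index intervals commute, so the algebra $\mathcal{A}=\set{\mathrm{P}^\G_{s,t}:0\le s<t\le1}''$ is abelian --- the von Neumann algebra $\mathcal{A}$ is a commutative von Neumann subalgebra of $\B(\E_1)$ and hence is $*$-isomorphic to some $L^\infty(\Omega,\nu)$. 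The normal state $\omega$ restricts to a state on $\mathcal{A}$, and since $\omega$ is faithful and $\E_1$ is separable, $\mathcal{A}$ acts with a cyclic and separating structure making the restriction of $\omega$ a faithful normal state; this gives a concrete probabilistic model in which each $\mathrm{P}^\G_{s,t}$ becomes an indicator random variable $\chfc{\set{Z\cap[s,t]=\emptyset}}$.

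The heart of the argument is to identify the abstract spectrum $\Omega$ with the space $\mathcal{C}_{[0,1]}$ of closed subsets. First I would define, for a point of the spectrum (a character $\chi$ of $\mathcal{A}$), the set $Z_\chi=\set{x\in[0,1]:\chi(\mathrm{P}^\G_{s,t})=0\text{ whenever }s<x<t}$, i.e.\ the set of points not covered by any ``empty'' interval; equivalently $Z_\chi$ is the complement of the union of all open intervals $(s,t)$ whose indicator character value is $1$. One checks this is closed, and that the map $\chi\mapsto Z_\chi$ is measurable and essentially a Borel isomorphism onto its image after discarding a null set. The semigroup relation $\mathrm{P}^\G_{s,t}\mathrm{P}^\G_{t,u}=\mathrm{P}^\G_{s,u}$ is precisely what guarantees consistency: the event $\set{Z\cap[s,u]=\emptyset}$ factors as the intersection of $\set{Z\cap[s,t]=\emptyset}$ and $\set{Z\cap[t,u]=\emptyset}$, matching $\mathrm{P}^\G_{s,u}=\mathrm{P}^\G_{s,t}\mathrm{P}^\G_{t,u}$. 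Pushing $\omega|_{\mathcal{A}}$ forward under this identification produces $\mu_\omega$, and the defining formula $\mu_\omega(\set{Z:Z\cap(\bigcup_i[s_i,t_i])=\emptyset})=\omega(\mathrm{P}^\G_{s_1,t_1}\cdots\mathrm{P}^\G_{s_k,t_k})$ holds because the product of the indicators equals the indicator of the intersection of the avoidance events.

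The map $j_\G$ is then defined as the inverse of the $*$-isomorphism $\mathcal{A}\cong L^\infty(\mu_\omega)$, sending $\chfc{\set{Z\cap[s,t]=\emptyset}}$ to $\mathrm{P}^\G_{s,t}$; it is normal and multiplicative because it is built from a von Neumann algebra isomorphism, and it is uniquely determined since the indicator functions $\chfc{\set{Z\cap[s,t]=\emptyset}}$ generate $L^\infty(\mu_\omega)$ as a von Neumann algebra (their preimages generate $\mathcal{A}$ by definition). Uniqueness of $\mu_\omega$ follows because the avoidance probabilities of finite unions of intervals form a determining class for measures on $\mathcal{C}_{[0,1]}$: these events generate the Borel $\sigma$-field, by a monotone class / Dynkin argument using that closed sets are determined by which rational intervals they avoid.

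The main obstacle I expect is the measure-theoretic identification of the abstract maximal ideal space of $\mathcal{A}$ with $\mathcal{C}_{[0,1]}$, specifically verifying that the avoidance events $\set{Z\cap[s,t]=\emptyset}$ generate the Borel structure of $\mathcal{C}_{[0,1]}$ and that the natural map respects it modulo null sets. One must handle the subtlety that $\mathcal{C}_{[0,1]}$ carries the Hausdorff/Vietoris topology, that the family of $\mathrm{P}^\G_{s,t}$ is indexed by a continuum, and that separability of $\E_1$ is what allows reduction to a countable (e.g.\ rational-endpoint) generating family; the consistency afforded by the semigroup relation and the covariance under $\Theta_u$ is exactly what makes the continuum-indexed family reduce cleanly to this countable skeleton. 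The rest --- normality, faithfulness transfer, and the product formula --- is routine once this identification is in place.
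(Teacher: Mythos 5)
First, a point of context: the paper you were given does not prove this theorem at all --- it is imported verbatim from \cite[Theorem 3.16]{Lie09a} --- so your attempt must be measured against the proof in that memoir. Your overall architecture is the natural one and matches it in spirit: the $\mathrm{P}^\G_{s,t}$ commute (your argument can be tightened by noting $\mathrm{Pr}_{\G_{a+b}}=\mathrm{Pr}_{\G_a}\otimes\mathrm{Pr}_{\G_b}$, valid because $\G$ is a subsystem), the abelian von Neumann algebra $\mathcal{A}$ together with the faithful normal state $\omega$ yields a standard probability space, characters are sent to closed sets, and uniqueness of $\mu_\omega$ and of $j_\G$ follows from a $\pi$-system argument because every open subset of $[0,1]$ is an increasing union of finite unions of closed intervals. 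Those parts are sound.

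The genuine gap is in the existence half, precisely at the words ``essentially a Borel isomorphism onto its image after discarding a null set''. One inclusion is automatic: if $Z_\chi\cap[s,t]=\emptyset$, compactness produces a finite chain of overlapping intervals witnessing $\chi(\mathrm{P}^\G_{s,t})=1$. The converse fails pointwise: $\chi(\mathrm{P}^\G_{s,t})=1$ only forces $Z_\chi\cap(s,t)=\emptyset$ and says nothing about the endpoints (also, with your definition via open intervals, $Z_\chi$ always contains $0$ and $1$; you need the relatively open intervals $[0,b)$ and $(a,1]$ as well). Consequently, what your pushforward construction actually delivers is
\begin{displaymath}
\mu_\omega\left(\set{Z:Z\cap[s,t]=\emptyset}\right)=\omega\left(\bigvee\nolimits_{a<s,\,b>t}\mathrm{P}^\G_{a,b}\right)
\end{displaymath}
(one-sided at the boundary), whereas the theorem demands that this supremum equal $\mathrm{P}^\G_{s,t}$. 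This outer regularity is not a technicality to be discarded: it is a \emph{necessary} condition for any representing measure to exist (disjoint compact sets have positive distance, so $\set{Z\cap[a,b]=\emptyset}\uparrow\set{Z\cap[s,t]=\emptyset}$ as $[a,b]\downarrow[s,t]$; $\sigma$-additivity plus faithfulness of $\omega$ then force $\bigvee_{a<s,\,b>t}\mathrm{P}^\G_{a,b}=\mathrm{P}^\G_{s,t}$), and it does not follow from anything you actually use. The semigroup identity and commutativity alone are satisfied by families with no representing measure: on the one-dimensional space $L^2(\delta_{\set{1/2}})$ the multiplication operators $\tilde{\mathrm{P}}_{s,t}=\chfc{\set{Z:Z\cap[s,t)=\emptyset}}$ obey $\tilde{\mathrm{P}}_{r,s}\tilde{\mathrm{P}}_{s,t}=\tilde{\mathrm{P}}_{r,t}$, yet their avoidance numbers would require both $1/2\in Z$ a.s.\ and $1/2\notin Z$ a.s. Covariance under $\Theta_u$, which you mention but never bring to bear, does not repair this either: the needed continuity of $(s,t)\mapsto\mathrm{P}^\G_{s,t}$ is where the \emph{measurability} of $\E$ and of the subsystem $\G$ enters, and any correct proof (in particular the one in \cite{Lie09a}) must establish it before the spectral identification can be carried out. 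Note also that without this continuity the rational-endpoint projections need not generate $\mathcal{A}$, so your ``reduction to a countable skeleton'' and the surjectivity of $j_\G$ onto $\set{\mathrm{P}^\G_{s,t}:0\le s<t\le1}''$ depend on the very same missing lemma.
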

  \section{Stationary factorising   measure  types}
We saw above that the space $L^\infty(\mu_\omega)$ seems to play a more fundamental r\^ole than the measure $\mu_\omega$ itself. That means, equivalent measures yield the same structure. We want to formalise this. 

Recall that 
   a \emph{measure type} is  an equivalence class  of probability measures, where  equivalence of measures $\mu$ and $\nu$ (symbol $\mu\sim\nu$) means that    $\mu$ and $\nu$ have the same null sets.

On $\mathcal{C}_{[0,1]}$, we have the natural operations of restriction $Z\mapsto Z_{s,t}=Z\cap[s,t]$ and circular shift $Z\mapsto Z+t:=\overline{Z+t\pmod 1}$. The first gives rise to an image measure $\mu_{s,t}$, the second to the image measure $\mu+t$. The convolution  associated with $\cup$ is denoted by $\ast$. These notions transfer naturally to measure types.
\begin{definition}
A measure type $\M$     on  $\mathcal{C}_{[0,1]}$ is
   \emph{stationary factorising}
if 
\begin{eqnarray*}
  \M_{r,t}&=&\M_{r,s}\ast\M_{s,t}\dmf{0\le r<s<t\le 1}\\
  \M_{r,s}+t&=&\M_{r+t,s+t}\dmf{0\le r<s<s+t\le 1}
\end{eqnarray*}  
 \end{definition}
\def\citt{\cite[Theorem 3.22 and Corollary 6.2]{Lie09a}}
\begin{theorem}[\citt]
 In the situation of  \ref{th:existencemuomega},
 \begin{displaymath}
\M^\G=\set{\mu_\omega:\omega\mbox{~faithful~}}
\end{displaymath}
is a stationary factorising measure
type.
\end{theorem}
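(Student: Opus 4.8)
The plan is to split the assertion into two independent claims: that all measures $\mu_\omega$ with $\omega$ faithful are mutually equivalent, so that $\M^\G$ is genuinely a single measure type; and that this type satisfies the two defining relations of a stationary factorising measure type. Throughout I would use that the operations of restriction $Z\mapsto Z\cap[r,t]$, circular shift $Z\mapsto Z+t$ and the convolution $\ast$ send equivalent measures to equivalent measures --- image measures and products of equivalent probability measures are again equivalent --- so that all three descend to measure types, and the restricted type $\M_{r,t}$ may be computed from any faithful witness $\omega$.

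For the first claim, fix faithful normal states $\omega_1,\omega_2$. By \ref{th:existencemuomega} there are normal isomorphisms $j_\G^{(i)}\colon L^\infty(\mu_{\omega_i})\to\set{\mathrm{P}^\G_{s,t}:0\le s<t\le1}''$ sending $\chfc{\set{Z\cap[s,t]=\emptyset}}$ to $\mathrm{P}^\G_{s,t}$; since each $j_\G^{(i)}$ is onto, these indicators generate $L^\infty(\mu_{\omega_i})$ as a von Neumann algebra. I would then consider $\Phi=(j_\G^{(2)})^{-1}\circ j_\G^{(1)}$, a normal isomorphism $L^\infty(\mu_{\omega_1})\to L^\infty(\mu_{\omega_2})$ fixing every generating indicator. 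The family of Borel sets $A$ for which $\Phi$ carries $[\chfc{A}]_{\mu_{\omega_1}}$ to $[\chfc{A}]_{\mu_{\omega_2}}$ contains the generating ``miss'' sets and is closed under complementation and, by normality of $\Phi$ (which preserves countable suprema of projections, i.e.\ countable unions of sets), under countable unions; hence it is the whole Borel $\sigma$-field. Therefore $\Phi$ is the canonical identification, and $\mu_{\omega_1}(A)=0$ exactly when $\mu_{\omega_2}(A)=0$, so $\mu_{\omega_1}\sim\mu_{\omega_2}$ and $\M^\G$ is a well-defined measure type.

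For the factorisation $\M_{r,t}=\M_{r,s}\ast\M_{s,t}$ I would produce one convenient witness. With respect to $\E_1=\E_r\otimes\E_{s-r}\otimes\E_{t-s}\otimes\E_{1-t}$ choose a faithful normal product state $\omega_0$. For intervals $[s_i,t_i]\subseteq[r,t]$, splitting each factor that crosses $s$ by $\mathrm{P}^\G_{s_i,t_i}=\mathrm{P}^\G_{s_i,s}\mathrm{P}^\G_{s,t_i}$ rewrites $\prod_i\mathrm{P}^\G_{s_i,t_i}$ as a product $AB$ with $A$ assembled from the traces on $[r,s]$ and $B$ from the traces on $[s,t]$; here $A$ lies in $\unit_{\E_r}\otimes\B(\E_{s-r})\otimes\unit_{\E_{t-s}}\otimes\unit_{\E_{1-t}}$ and $B$ in $\unit_{\E_r}\otimes\unit_{\E_{s-r}}\otimes\B(\E_{t-s})\otimes\unit_{\E_{1-t}}$, so they commute and $\omega_0(AB)=\omega_0(A)\,\omega_0(B)$. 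Since $\omega_0(A)=\mu_{\omega_0,(r,s)}(\set{W\cap\bigcup_i[s_i,t_i]=\emptyset})$ and likewise for $B$, and since $W_1\cup W_2$ misses a compact set iff both $W_1$ and $W_2$ do, this product is exactly the value the convolution $\mu_{\omega_0,(r,s)}\ast\mu_{\omega_0,(s,t)}$ assigns to the same ``miss'' event. As these events form a $\cap$-stable generating family and both measures are probabilities, they coincide; passing to types and invoking the first claim yields the factorisation relation.

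For stationarity $\M_{r,s}+t=\M_{r+t,s+t}$ I would exploit the covariance $\mathrm{P}^\G_{s'+u,t'+u}=\Theta_u(\mathrm{P}^\G_{s',t'})$. For faithful normal $\omega$ put $\omega'=\omega\circ\Theta_{-t}$, again faithful normal; evaluating on the generating ``miss'' values shows $\mu_{\omega'}=\mu_\omega+t$, whence restriction to $[r+t,s+t]$ gives the equality of measures $\mu_{\omega',(r+t,s+t)}=\mu_{\omega,(r,s)}+t$. The left-hand side has type $\M_{r+t,s+t}$ and the right-hand side type $\M_{r,s}+t$, so the two types agree. I expect the genuinely delicate step to be the first claim: everything else is bookkeeping with the relation $\mathrm{P}^\G_{s,t}\mathrm{P}^\G_{t,u}=\mathrm{P}^\G_{s,u}$ and the covariance, once a single adapted (product, resp.\ shifted) faithful state is in hand, whereas the equivalence of all $\mu_\omega$ rests on the normality of $j_\G$ and on the ``miss'' sets generating the Borel structure of $\mathcal{C}_{[0,1]}$.
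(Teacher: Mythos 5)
The paper itself offers no proof of this statement --- it is imported verbatim from \cite[Theorem 3.22 and Corollary 6.2]{Lie09a} --- so your proposal has to be judged on its own merits. Its three steps are sound and are the natural ones: mutual equivalence of the $\mu_\omega$ follows from normality of the isomorphisms $j_\G$ once one knows that the miss-sets $\set{Z:Z\cap[s,t]=\emptyset}$ generate the Borel $\sigma$-field of $\C_{[0,1]}$ (your class of good sets is indeed a $\sigma$-algebra, so the argument closes); the factorisation $\M_{r,t}=\M_{r,s}\ast\M_{s,t}$ follows from an adapted faithful normal product state, the splitting relation $\mathrm{P}^\G_{s_i,t_i}=\mathrm{P}^\G_{s_i,s}\mathrm{P}^\G_{s,t_i}$ and a $\pi$-system uniqueness argument; stationarity follows from $\omega'=\omega\circ\Theta_{-t}$. (In the last step you can avoid the circular shift and its closure pedantry altogether: the definition only demands $\M_{r,s}+t=\M_{r+t,s+t}$ for $s+t\le1$, so every interval in sight stays inside $[0,1]$ and the covariance relation $\mathrm{P}^\G_{s'+u,t'+u}=\Theta_u(\mathrm{P}^\G_{s',t'})$ applies verbatim, with no wrap-around.)

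There is, however, one genuine gap against the literal statement. The paper defines a measure type to be an \emph{equivalence class} of probability measures, so to show that $\M^\G=\set{\mu_\omega:\omega\mbox{~faithful~}}$ \emph{is} a measure type you need two things: (a) all $\mu_\omega$ are mutually equivalent, and (b) conversely, every probability measure $\nu$ with $\nu\sim\mu_{\omega_0}$ is itself of the form $\mu_{\omega'}$ for some faithful normal state $\omega'$. You prove only (a); your conclusion that this makes $\M^\G$ ``genuinely a single measure type'' conflates being \emph{contained in} an equivalence class with being \emph{equal to} one. Claim (b) is true but needs an argument, e.g.: write $f=\d\nu/\d\mu_{\omega_0}>0$ a.e., put $g_n=f\,\chfc{\set{n-1\le f<n}}$ and $\omega'(a)=\sum_{n\ge1}\omega_0\bigl(j_\G(\sqrt{g_n}\,)\,a\,j_\G(\sqrt{g_n}\,)\bigr)$. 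This series of positive normal functionals has total mass $\sum_n\int g_n\,\d\mu_{\omega_0}=1$, so $\omega'$ is a normal state; it is faithful because $\omega'(a)=0$ for $a\ge0$ forces $\sqrt a\,j_\G(\sqrt{g_n}\,)=0$ for all $n$, i.e.\ $\sqrt a$ vanishes on the ranges of the projections $j_\G(\chfc{\set{g_n>0}})$, whose supremum is $\unit$; and $\omega'(j_\G(g))=\int g f\,\d\mu_{\omega_0}=\int g\,\d\nu$, so the uniqueness part of \ref{th:existencemuomega} gives $\mu_{\omega'}=\nu$. With this supplement your proof is complete --- and you get the needed density almost for free if you prove (a) the slightly slicker way: $\omega\circ j_\G$ is a faithful normal state on $L^\infty(\mu_{\omega_0})$, hence of the form $g\,\d\mu_{\omega_0}$ with $g>0$ a.e., whence $\mu_\omega=g\,\mu_{\omega_0}\sim\mu_{\omega_0}$.
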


  \section{The embedding $\E\uvtimes\F\subseteq\E\otimes\F$}
 We use also the following extension of  \ref{th:existencemuomega}: \def\citt{\cite[Proposition 3.32]{Lie09a}}
  \begin{proposition}[\citt]
Suppose for two subsystems $\G_1,\G_2$ of an Arveson system that the projection families  $\mathrm{P}^{\G_1},\mathrm{P}^{\G_2}$ commute.

Then  there exists for all normal states $\omega$ on $\mathcal{B}(\E_1)$ a unique probability measure 
     $\mu_\omega$ on $\mathcal{C}_{[0,1]}\times\mathcal{C}_{[0,1]}$ with
    \begin{displaymath}
      \mu_\omega(\set{(Z_1,Z_2):Z_j\cap\smash{\bigcup_i}[s^j_i,t^j_i]=\emptyset})=\omega(\prod_j\prod_i\mathrm{P}^{\G_j}_{s^j_i,t^j_i}).
  \end{displaymath}
The corresponding measure type is denoted $\M^{\G_1,\G_2}$.

 Further, there exists unique  isomorphism $J_{\G_1,\G_2}:L^\infty(\M^{\G_1,\G_2})\mapsto\mathcal{B}(\E_1)$ with
  \begin{displaymath}
    J_{\G_1,\G_2}(\chfc{\set{(Z_1,Z_2):Z_j\cap[s,t]=\emptyset}})=P^j_{s,t}\dmf{j=1,2}.
  \end{displaymath}
\end{proposition}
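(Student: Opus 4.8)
The plan is to deduce this from the single-subsystem \ref{th:existencemuomega} by using the commutation hypothesis to make the jointly generated algebra abelian. First I would note that, by \ref{th:existencemuomega}, each family $\set{\mathrm{P}^{\G_j}_{s,t}:0\le s<t\le1}$ generates an \emph{abelian} von Neumann subalgebra of $\B(\E_1)$, namely the range of $j_{\G_j}$. Since the two families commute by assumption, the von Neumann algebra
\[
\mathcal{A}=\set{\mathrm{P}^{\G_1}_{s,t},\mathrm{P}^{\G_2}_{s,t}:0\le s<t\le1}''\subseteq\B(\E_1)
\]
is generated by two commuting abelian algebras, hence is itself abelian. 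As $\E_1$ is separable, $\mathcal{A}$ is a separable abelian von Neumann algebra; fixing a normal state $\omega$ and restricting it to $\mathcal{A}$, the spectral theorem supplies a standard probability space $(Y,\nu)$ together with a normal $*$-isomorphism $\Phi\colon\mathcal{A}\to L^\infty(Y,\nu)$ under which $\omega|_{\mathcal{A}}$ becomes integration against $\nu$.

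Next I would realise the two random closed sets on the common space $(Y,\nu)$. Composing the isomorphism $j_{\G_1}$ of \ref{th:existencemuomega} for $\G_1$, whose domain is $L^\infty(\mu^{1}_\omega)$, with $\Phi$ embeds $L^\infty(\mu^{1}_\omega)$ normally and injectively into $L^\infty(Y,\nu)$; by the duality between standard Borel spaces and their measure algebras this embedding is implemented by a measurable map $\kappa_1\colon Y\to\mathcal{C}_{[0,1]}$, characterised mod $\nu$ by $\Phi(\mathrm{P}^{\G_1}_{s,t})=\chfc{\set{y:\kappa_1(y)\cap[s,t]=\emptyset}}$. The same argument applied to $\G_2$ produces $\kappa_2\colon Y\to\mathcal{C}_{[0,1]}$. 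Setting $\kappa=(\kappa_1,\kappa_2)\colon Y\to\mathcal{C}_{[0,1]}\times\mathcal{C}_{[0,1]}$, which is measurable as each coordinate is, I would \emph{define} $\mu_\omega:=\kappa_*\nu$. Writing $A=\set{(Z_1,Z_2):Z_j\cap\bigcup_i[s^j_i,t^j_i]=\emptyset}$, multiplicativity of $\Phi$ gives $\Phi(\prod_j\prod_i\mathrm{P}^{\G_j}_{s^j_i,t^j_i})=\chfc{\kappa^{-1}(A)}$, whence
\[
\mu_\omega(A)=\nu\bigl(\kappa^{-1}(A)\bigr)=\omega\Bigl(\textstyle\prod_{j}\prod_{i}\mathrm{P}^{\G_j}_{s^j_i,t^j_i}\Bigr),
\]
which is the asserted identity.

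Uniqueness is then the easy part: the cylinders $A$ are stable under intersection (an intersection of two of them is again of the same form, with the index ranges merged) and they generate the Borel $\sigma$-field of $\mathcal{C}_{[0,1]}\times\mathcal{C}_{[0,1]}$, so by the $\pi$--$\lambda$ theorem any probability measure is determined by its values on them. For the isomorphism I would take $J_{\G_1,\G_2}:=\Phi^{-1}\circ\kappa^{*}\colon L^\infty(\mu_\omega)\to\mathcal{A}\subseteq\B(\E_1)$, where the pullback $\kappa^{*}$ is an injective normal $*$-homomorphism sending $\chfc{\set{(Z_1,Z_2):Z_j\cap[s,t]=\emptyset}}$ to $\chfc{\set{y:\kappa_j(y)\cap[s,t]=\emptyset}}$; hence $J_{\G_1,\G_2}$ carries this indicator to $\mathrm{P}^{\G_j}_{s,t}=P^j_{s,t}$. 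Surjectivity onto $\mathcal{A}$ holds because these images generate $\mathcal{A}$, and for faithful $\omega$ (so that $\mu_\omega$ has full support and $L^\infty(\M^{\G_1,\G_2})=L^\infty(\mu_\omega)$) the map is an isomorphism; uniqueness of $J_{\G_1,\G_2}$ follows since a normal homomorphism is pinned down by its values on this generating family.

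The step I expect to be the main obstacle is the simultaneous realisation of $\kappa_1$ and $\kappa_2$ as \emph{point} maps on one and the same $(Y,\nu)$: one must verify that passing \ref{th:existencemuomega} through $\Phi$ reproduces the marginal random-set structure inside the common abelian algebra, i.e.\ that the inherited $\set{0,1}$-valued processes are multiplicative and right-continuous enough to define genuine closed random sets, and that the two coordinate maps are jointly measurable into the product space rather than only individually. Everything else---abelianness, the pushforward computation, uniqueness, and the construction of $J_{\G_1,\G_2}$---is then formal.
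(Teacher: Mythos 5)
There is no in-paper proof to compare against here: the paper imports this statement verbatim from \cite[Proposition 3.32]{Lie09a} and never argues it, so your proposal can only be judged on its own terms. On those terms it is essentially correct, and it is the natural ``two-set'' extension of the representation result \ref{th:existencemuomega}: abelianness of $\mathcal{A}=\set{\mathrm{P}^{\G_1}_{s,t},\mathrm{P}^{\G_2}_{s,t}:0\le s<t\le1}''$ (generated by two commuting abelian algebras), identification $\Phi\colon\mathcal{A}\to L^\infty(Y,\nu)$ with $(Y,\nu)$ standard, von Neumann point realisation of the normal embeddings $\Phi\circ j_{\G_j}$ by Borel maps $\kappa_j\colon Y\to\mathcal{C}_{[0,1]}$, pushforward along $\kappa=(\kappa_1,\kappa_2)$, a $\pi$--$\lambda$ uniqueness argument, and $J_{\G_1,\G_2}=\Phi^{-1}\circ\kappa^*$. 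Notably, the obstacle you flag at the end is not actually an obstacle in your own setup: by composing with $j_{\G_j}$, whose domain is already $L^\infty$ of a space of \emph{closed sets}, the point-realisation theorem hands you genuine $\mathcal{C}_{[0,1]}$-valued maps, so no multiplicativity/right-continuity regularisation of $\set{0,1}$-valued processes is needed, and joint measurability of $(\kappa_1,\kappa_2)$ is automatic since the product Borel structure on a product of separable metric spaces is the product of the Borel structures.

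Two slips should be repaired. First, an isomorphism $\Phi\colon\mathcal{A}\to L^\infty(Y,\nu)$ carrying $\omega|_{\mathcal{A}}$ to integration against $\nu$ exists only when $\omega|_{\mathcal{A}}$ is \emph{faithful} (integration against $\nu$ is faithful on $L^\infty(\nu)$), whereas the proposition quantifies over all normal states. The routine fix: run your construction once with a fixed faithful normal state $\omega_0$ (one exists since $\E_1$ is separable), obtain $J_{\G_1,\G_2}$, and then for arbitrary normal $\omega$ define $\mu_\omega(A)=\omega\bigl(J_{\G_1,\G_2}(\chfc{A})\bigr)$; countable additivity follows from normality of $\omega$ and of $J_{\G_1,\G_2}$. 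Second, in the uniqueness step the two-sided miss-cylinders alone (both index ranges nonempty) do \emph{not} generate the Borel $\sigma$-field of $\mathcal{C}_{[0,1]}\times\mathcal{C}_{[0,1]}$: no such cylinder contains any pair whose second entry is $[0,1]$, so the generated $\sigma$-field cannot separate such pairs. You must allow empty index ranges (the empty product being $\unit$), i.e.\ include the one-sided cylinders $\set{Z_1\cap U=\emptyset}\times\mathcal{C}_{[0,1]}$ and their mirrors; these are permitted by the formula in the statement, the enlarged family is still a $\pi$-system under your merging argument, and it does generate the product $\sigma$-field. Finally, a one-line addendum is needed to justify that $\M^{\G_1,\G_2}$ is well defined: for faithful $\omega,\omega'$ one has $\mu_\omega(A)=0$ iff the projection $J_{\G_1,\G_2}(\chfc{A})$ vanishes iff $\mu_{\omega'}(A)=0$, so all faithful states yield equivalent measures.
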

  Denote for a closed set $Z\subseteq \NRp$ the set of its limit points  by ${\hat{Z}}$. I.e.,
  \begin{displaymath}
    {\hat{Z}}=\set{t\in Z:t\in \overline{Z\setminus\set t}}=\set{t\in Z:\exists Z\ni t_n\ne t, n\in \mathbb{N}, t=\lim_{n\to\infty} t_n}.
  \end{displaymath}
  This means that $Z\setminus {\hat{Z}}$ is the countable set of isolated points of $Z$. 
\def\citt{\cite[Proposition 3.33]{Lie09a}}
\begin{example}[\citt]
 Consider  $\G_1=\mathbb{C}u$ for a unit $\sg tu$ and  $\G_2=\E^{\mathscr{U}}$. Then 
\begin{displaymath}
    J_{\G_1,\G_2}(f)=J_{u,\E^\U}(f)=J_u(g)
  \end{displaymath}
where $g(Z)=f(Z,{\hat{Z}})$.
\end{example}

  \begin{proposition}\label{thm:inclusion}
For spatial Arveson systems $(\E,u)$, $(\F,v)$ it holds 
    \begin{equation}\label{eq:projectionspatialproduct}
      \mathrm{P}^{\E\uvtimes\F}_{s,t}=J_{\E\otimes v,u\otimes\F}(\chfc{\set{(Z_1,Z_2):Z_1\cap Z_2\cap[s,t]=\emptyset}})
    \end{equation}
  \end{proposition}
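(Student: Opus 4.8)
The plan is to sidestep a direct description of the join fibre $(\E\uvtimes\F)_{t-s}$ and instead realise the projection $\mathrm{P}^{\E\uvtimes\F}_{s,t}$ as a strong limit of finite products of projections on which the isomorphism $J:=J_{\E\otimes v,u\otimes\F}$ can be evaluated factor by factor. First I would verify the hypotheses of the preceding Proposition for $\G_1=\E\otimes v$ and $\G_2=u\otimes\F$. Since the fibre projections of these subsystems are $\unit_{\E_r}\otimes\mathrm{Pr}_{\mathbb{C}v_r}$ and $\mathrm{Pr}_{\mathbb{C}u_r}\otimes\unit_{\F_r}$, under $(\E\otimes\F)_1=\E_1\otimes\F_1$ one gets $\mathrm{P}^{\E\otimes v}_{s,t}=\unit_{\E_1}\otimes\mathrm{P}^{\mathbb{C}v}_{s,t}$ and $\mathrm{P}^{u\otimes\F}_{s,t}=\mathrm{P}^{\mathbb{C}u}_{s,t}\otimes\unit_{\F_1}$; living on different tensor legs these commute for all indices, so $\M^{\E\otimes v,u\otimes\F}$ and $J$ exist, with $Z_1$ attached to $\E\otimes v$ and $Z_2$ to $u\otimes\F$.

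The combinatorial core is a set identity. For a partition $\pi\colon s=r_0<\dots<r_n=t$ with $\ell_i=r_i-r_{i-1}$ set
\[
A_\pi=\bigcap_{i=1}^n\set{Z_1\cap[r_{i-1},r_i]=\emptyset\ \text{or}\ Z_2\cap[r_{i-1},r_i]=\emptyset}.
\]
I would show that $A_\pi$ grows along refinements and that $\bigcup_\pi A_\pi=\set{Z_1\cap Z_2\cap[s,t]=\emptyset}$: the inclusion ``$\subseteq$'' is clear, and for ``$\supseteq$'' one uses that two disjoint closed subsets of the compact interval $[s,t]$ have strictly positive distance, so any partition of sufficiently small mesh separates $Z_1$ and $Z_2$ into distinct subintervals and hence lies in $A_\pi$. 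Consequently $\chfc{A_\pi}\nearrow\chfc{\set{Z_1\cap Z_2\cap[s,t]=\emptyset}}$.

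Applying the normal isomorphism $J$, which is multiplicative and preserves increasing limits, gives $J(\chfc{A_\pi})=\prod_{i}\big(\mathrm{P}^{\E\otimes v}_{r_{i-1},r_i}\vee\mathrm{P}^{u\otimes\F}_{r_{i-1},r_i}\big)$, the ``or'' on each subinterval turning into the join $P+Q-PQ$ of the two commuting projections. On the middle leg $(\E\otimes\F)_{\ell_i}=\E_{\ell_i}\otimes\F_{\ell_i}$ this join equals $\unit-\mathrm{Pr}_{u_{\ell_i}^\perp}\otimes\mathrm{Pr}_{v_{\ell_i}^\perp}=\mathrm{Pr}_{G^{u,v}_{\ell_i}}$, exactly the space in \ref{eq:inclusionsystem}; since the factors for distinct $i$ act on distinct legs, their product is $\unit_{(\E\otimes\F)_s}\otimes\mathrm{Pr}_{\bigotimes_i G^{u,v}_{\ell_i}}\otimes\unit_{(\E\otimes\F)_{1-t}}$. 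Refining $\pi$, the left-hand side tends to $J(\chfc{\set{Z_1\cap Z_2\cap[s,t]=\emptyset}})$ by normality, while $\bigotimes_i G^{u,v}_{\ell_i}\nearrow(\E\uvtimes\F)_{t-s}$ by \ref{eq:limitinclusionsystem}, so the right-hand side tends to $\unit\otimes\mathrm{Pr}_{(\E\uvtimes\F)_{t-s}}\otimes\unit=\mathrm{P}^{\E\uvtimes\F}_{s,t}$. Equating the two limits yields \ref{eq:projectionspatialproduct}.

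The one genuine obstacle is the limit interchange in the last step: one must know that the increasing net of subspaces $\bigotimes_i G^{u,v}_{\ell_i}$ has strong closure exactly $(\E\uvtimes\F)_{t-s}$, i.e.\ that the inductive-limit fibre coincides with the closed union of the partition subspaces. This is precisely the content of \ref{prop:inductive limit}, so once the set identity and the local computation $\mathrm{Pr}_{G^{u,v}_r}=\mathrm{P}^{\E\otimes v}\vee\mathrm{P}^{u\otimes\F}$ are in hand, the convergence follows routinely from monotonicity and the normality of $J$.
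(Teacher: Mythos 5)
Your proposal is correct and follows essentially the same route as the paper: both rest on the inductive limit description of \ref{prop:inductive limit}, the identification $\mathrm{Pr}_{G^{u,v}_{\ell}}=\mathrm{P}^{\E\otimes v}\vee\mathrm{P}^{u\otimes\F}$ on each subinterval, normality of $J_{\E\otimes v,u\otimes\F}$ to interchange the partition limit with $J$, and the positive-distance argument for disjoint closed sets. The only (harmless) difference is that you carry out the computation directly on $[s,t]$, whereas the paper treats $[0,1]$ and then notes that $\mathrm{P}^{\E\uvtimes\F}_{0,1}$ already determines the subsystem $\E\uvtimes\F$.
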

  \begin{remark}
    Compare this expression to \cite[Theorem 2.1]{Pow04a}, which seems to compute $ \mathrm{P}^{\G}_{0,1}$ in a special case. Observe that the latter projection identifies already the corresponding Arveson subsystem.  
  \end{remark}
  \begin{proof}
We use  \ref{prop:inductive limit}. Using the notation \ref{eq:inclusionsystem} we derive
\begin{displaymath}
  G^{u,v}_t\otimes \E_{1-t}\otimes\F_{1-t}= J_{\E\otimes v,u\otimes\F}(\chfc{\set{(Z_1,Z_2):Z_1\cap [0,t]=\emptyset\mbox{~or~}Z_2\cap [0,t]=\emptyset}}).
\end{displaymath}
By normality of $J_{\E\otimes v,u\otimes\F}$ we obtain
\begin{eqnarray*}
  \mathrm{P}^{\E\uvtimes\F}_{0,1}&=&\mathop{\mathrm{lim}}\limits_{(t_1,\dots,t_n)\in \Pi^1}\mathrm{Pr}_{G^{u,v}_{t_1}}\otimes\mathrm{Pr}_{ G^{u,v}_{t_2-t_1}}\otimes\dots\otimes \mathrm{Pr}_{G^{u,v}_{t_n-t_{n-1}}}\\
&=& \mathop{\mathrm{lim}}\limits_{(t_1,\dots,t_n)\in \Pi^1}J_{\E\otimes v,u\otimes\F}(\chfc{\set{(Z_1,Z_2): \forall i: Z_1\cap [\sum_{j=1}^it_j,\sum_{j=1}^{i+1}t_{j}]=\emptyset\mbox{~or~}Z_2\cap [\sum_{j=1}^it_j,\sum_{j=1}^{i+1}t_{j}]=\emptyset}})\\
&=&J_{\E\otimes v,u\otimes\F}( \mathop{\mathrm{lim}}\limits_{(t_1,\dots,t_n)\in \Pi^1}\chfc{\set{(Z_1,Z_2):\forall i: Z_1\cap [\sum_{j=1}^it_j,\sum_{j=1}^{i+1}t_{j}]=\emptyset\mbox{~or~}Z_2\cap [\sum_{j=1}^it_j,\sum_{j=1}^{i+1}t_{j}]=\emptyset}})\\
&=&J_{\E\otimes v,u\otimes\F}(\chfc{\set{(Z_1,Z_2):Z_1\cap Z_2=\emptyset}}).
\end{eqnarray*}
Formula \ref{eq:projectionspatialproduct} for $s\ne0$ or $t\ne 1$ follows immediately since  $\mathrm{P}^{\E\uvtimes\F}_{0,1}$ determines the whole Arveson system $\E\uvtimes\F$.
This completes the proof.
\end{proof}
  \begin{proposition}\relax\label{cor:alsoZhatdoesit}
The relation $\E\uvtimes\F=\E\otimes\F$ is valid  if and only if 
    \begin{equation}\label{eq:emptyintersection}
      Z_1\cap Z_2=\emptyset\dmf{\M^u \otimes\M^v-\mbox{a.s.}}
    \end{equation} if and only if \label{prop:alsoZhatdoesit} 
    \begin{equation}\label{eq:emptyintersectionb}
      \widehat{Z_1}\cap \widehat{Z_2}=\emptyset\dmf{\M^u \otimes\M^v-\mbox{a.s.}}
    \end{equation}
  \end{proposition}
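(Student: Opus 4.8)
The plan is to work with the measure-type picture established in the preceding Proposition \ref{thm:inclusion}. By that result, the projection $\mathrm{P}^{\E\uvtimes\F}_{0,1}$ equals $J_{\E\otimes v,u\otimes\F}(\chfc{\set{(Z_1,Z_2):Z_1\cap Z_2=\emptyset}})$, while the ambient projection $\mathrm{P}^{\E\otimes\F}_{0,1}=\unit_{\E_1\otimes\F_1}$ corresponds to the constant function $1$. Since $J_{\E\otimes v,u\otimes\F}$ is an isomorphism of von Neumann algebras, and since a subsystem of an Arveson system is determined by its projection $\mathrm{P}_{0,1}$ (as noted in the remark after Proposition \ref{thm:inclusion}), the equality $\E\uvtimes\F=\E\otimes\F$ holds if and only if the two indicator functions agree in $L^\infty(\M^{\E\otimes v,u\otimes\F})$. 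This is exactly the assertion that $\set{(Z_1,Z_2):Z_1\cap Z_2\ne\emptyset}$ is a null set for the relevant measure type, i.e.\ that $Z_1\cap Z_2=\emptyset$ almost surely. First I would make this equivalence precise, being careful that the relevant measure type here is $\M^{\E\otimes v,u\otimes\F}$ rather than the product $\M^u\otimes\M^v$.

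The next step is to reconcile the measure type $\M^{\E\otimes v,u\otimes\F}$ on $\mathcal{C}_{[0,1]}\times\mathcal{C}_{[0,1]}$ with the product type $\M^u\otimes\M^v$ appearing in \ref{eq:emptyintersection}. The idea is that the first coordinate $Z_1$, governing the subsystem $\E\otimes v$, sees only the $v$-side random set structure on the $\F$ factor, while $Z_2$, governing $u\otimes\F$, sees only the $u$-side on the $\E$ factor; moreover the two projection families $\mathrm{P}^{\E\otimes v}$ and $\mathrm{P}^{u\otimes\F}$ act on independent tensor legs of $\E_1\otimes\F_1$, so for a product state the joint distribution factorizes. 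Concretely, I would choose $\omega=\omega_\E\otimes\omega_\F$ a product of faithful normal states and verify via the defining formula in the quoted Proposition \cite[Proposition 3.32]{Lie09a} that $\mu_\omega$ factorizes as $\mu^{v}_{\omega_\F}\otimes\mu^{u}_{\omega_\E}$ (up to swapping coordinates), whence the measure type $\M^{\E\otimes v,u\otimes\F}$ coincides with $\M^v\otimes\M^u$ (equivalently $\M^u\otimes\M^v$ after relabeling). This establishes the equivalence between the equality of products and \ref{eq:emptyintersection}.

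For the second equivalence, between \ref{eq:emptyintersection} and \ref{eq:emptyintersectionb}, the plan is to reduce $Z_1\cap Z_2$ to $\widehat{Z_1}\cap\widehat{Z_2}$ by discarding isolated points. Since $Z_j\setminus\widehat{Z_j}$ is a countable set (the isolated points), I would argue that $Z_1\cap Z_2=\emptyset$ a.s.\ is equivalent to $\widehat{Z_1}\cap\widehat{Z_2}=\emptyset$ a.s., using that under a stationary factorising measure type the probability that two independent random sets share a prescribed single point is zero; this kind of statement should follow from the continuity/non-atomicity of the marginals and the factorisation, so that countably many isolated points contribute nothing to the intersection almost surely. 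The relevant tool is Example \cite[Proposition 3.33]{Lie09a} above, which relates the joint law involving $\E^\U$ to $\widehat Z$ via $g(Z)=f(Z,\widehat Z)$, indicating that passing between a set and its limit points is harmless at the level of the measure type.

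The main obstacle I anticipate is the factorisation step: justifying rigorously that $\M^{\E\otimes v,u\otimes\F}$ equals $\M^u\otimes\M^v$. The subtlety is that the subsystem $\E\otimes v$ lives in the tensor product $\E\otimes\F$ and its random set $Z_1$ must be shown to depend only on the $v$-structure of $\F$ (and not at all on $\E$), and symmetrically for $Z_2$; this requires unpacking how the projections $\mathrm{Pr}_{(\E\otimes v)_{t-s}}=\unit_{\E_{t-s}}\otimes\mathrm{Pr}_{\mathbb{C}v_{t-s}}$ decompose across the two tensor legs, and then invoking the uniqueness clause of \cite[Proposition 3.32]{Lie09a} to conclude the product form. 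Once the factorisation is in hand, both equivalences follow cleanly, so I would devote most of the care to this identification of measure types.
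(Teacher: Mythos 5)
Your plan coincides in structure with the paper's own proof: the first equivalence is read off from \ref{thm:inclusion} together with the identification of the joint measure type $\M^{\E\otimes v,u\otimes\F}$ with $\M^u\otimes\M^v$ (the paper compresses all of this into ``the first assertion is clear''; your product-state argument via the uniqueness clause of \cite[Proposition 3.32]{Lie09a} is the right way to make it precise, since $\mathrm{P}^{u\otimes\F}_{s,t}=\mathrm{P}^u_{s,t}\otimes\unit_{\F_1}$ and $\mathrm{P}^{\E\otimes v}_{s,t}=\unit_{\E_1}\otimes\mathrm{P}^v_{s,t}$ act on different tensor legs), and the second equivalence is handled, as in the paper, by discarding the countably many isolated points $Z_j\setminus\widehat{Z_j}$.

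There is, however, one gap in your second step. You justify the harmlessness of the isolated points by a non-atomicity property which you claim ``should follow from the continuity/non-atomicity of the marginals and the factorisation''. It does not follow from stationarity and factorisation alone: the measure type of the constant random set $Z\equiv[0,1]$ is stationary and factorising, it is atomic at every point, and for it conditions \ref{eq:emptyintersection} and \ref{eq:emptyintersectionb} genuinely differ (take the other component a.s.\ finite and nonempty with positive probability; then \ref{eq:emptyintersectionb} holds while \ref{eq:emptyintersection} fails). The missing ingredient is spatiality: because $u$ and $v$ are units, $\mu_\omega(\set{Z=\emptyset})=\omega(\mathrm{P}^u_{0,1})>0$ for faithful $\omega$, so $Z_1$ and $Z_2$ are a.s.\ different from $[0,1]$; for such non-degenerate stationary factorising random sets, \cite[Proposition 4.4]{Lie09a} states exactly that any fixed countable set is a.s.\ avoided, and a Fubini argument over the (random, countable) set of isolated points of the other component then yields $Z_1\cap Z_2=\widehat{Z_1}\cap Z_2=\widehat{Z_1}\cap\widehat{Z_2}$ a.s., which is the chain the paper writes down. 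Note also that the correct tool here is \cite[Proposition 4.4]{Lie09a}, not the example computing $J_{u,\E^\U}$ via $g(Z)=f(Z,\widehat Z)$ that you invoke; that example is what later drives the proof of \ref{thm1}, but it plays no r\^ole in this proposition.
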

  \begin{proof}
    The first assertion is clear. The second one follows from the fact that $Z_1\setminus \widehat{Z_1}$ and  $Z_2\setminus \widehat{Z_2}$  are countable. Since $(\E,u)$ and $(\F,v)$ are spatial, both $Z_1$and $Z_2$ are different from $[0,1]$ almost surely.  Then  we know from \cite[Proposition 4.4]{Lie09a} that such a stationary factorising random set almost never meets a countable set and we conclude
    \begin{displaymath}
Z_1\cap Z_2=\widehat{Z_1}\cap Z_2=\widehat{Z_1}\cap \widehat{Z_2}\dmf{\M^u \otimes\M^v-\mbox{a.s.} }
\end{displaymath}
This completes the proof. 
 \end{proof}
\begin{corollary}
    If the lattice $\mathscr{S}(\E\uvtimes\F)$ has finite depth and \ref{eq:emptyintersection} is not fulfilled, $\E\otimes\F\not\cong \E\uvtimes\F$.

  \end{corollary}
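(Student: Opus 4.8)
The plan is to argue by contradiction: I would show that an abstract isomorphism $\E\otimes\F\cong\E\uvtimes\F$ together with the proper inclusion $\E\uvtimes\F\subsetneq\E\otimes\F$ forces an infinite strictly descending chain of subsystems of $\E\uvtimes\F$, which is impossible once $\mathscr{S}(\E\uvtimes\F)$ has finite depth.

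First I would record that, by \ref{cor:alsoZhatdoesit}, the failure of \ref{eq:emptyintersection} is exactly equivalent to the strict inclusion $\E\uvtimes\F\subsetneq\E\otimes\F$. To lighten notation set $K=\E\otimes\F$ and $L=\E\uvtimes\F$, so that $L\subsetneq K$ as Arveson systems while $\mathscr{S}(L)$ has finite depth by hypothesis. Suppose toward a contradiction that $K\cong L$ and fix an isomorphism $\theta=(\theta_t)_{t\ge0}\colon K\to L$. Since $\theta(K)=L$ and $L\subsetneq K$, I would apply $\theta$ repeatedly to $L$, setting $M_n=\theta^n(L)$ for $n\ge0$. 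Each $M_n$ is a subsystem of $L$, because $M_n\subseteq L\subseteq K$ gives $\theta(M_n)\subseteq\theta(K)=L$; and all inclusions are strict: from $L\subsetneq K$ and the injectivity of $\theta$ one obtains $M_1=\theta(L)\subsetneq\theta(K)=L=M_0$, and inductively $M_{n+1}=\theta(M_n)\subsetneq\theta(M_{n-1})=M_n$. This yields an infinite strictly descending chain $L=M_0\supsetneq M_1\supsetneq M_2\supsetneq\cdots$ in $\mathscr{S}(L)$, contradicting finite depth; hence $K\not\cong L$, which is the assertion.

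The step that needs genuine care — and which I regard as the only non-formal point — is the claim that $\theta$ induces an order embedding of the subsystem lattice $\mathscr{S}(K)$ into $\mathscr{S}(L)$ preserving strict inclusions. For a subsystem $\G\subseteq K$ one checks that $\theta(\G):=(\theta_t(\G_t))_{t\ge0}$ has closed fibres, and the intertwining relation $\theta_{s+t}\circ V_{s,t}=W_{s,t}\circ(\theta_s\otimes\theta_t)$ shows that $W_{s,t}$ restricts to a unitary from $\theta_s(\G_s)\otimes\theta_t(\G_t)$ onto $\theta_{s+t}(\G_{s+t})$, so $\theta(\G)$ is again a subsystem of $L$. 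Injectivity of each unitary $\theta_t$ then guarantees that $\G_1\subsetneq\G_2$ implies $\theta(\G_1)\subsetneq\theta(\G_2)$. Once this is in place the remainder of the argument is purely order-theoretic, with the finite-depth hypothesis entering solely to exclude the infinite descending chain produced by the self-similarity $L\cong K\supsetneq L$.
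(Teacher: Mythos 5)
Your proposal is correct and follows essentially the same route as the paper: the paper's proof also notes that failure of \ref{eq:emptyintersection} makes $\E\uvtimes\F$ a proper subsystem of $\E\otimes\F$ and then iterates this observation under a hypothetical isomorphism to produce an infinite chain in $\mathscr{S}(\E\uvtimes\F)$, contradicting finite depth. You have merely spelled out the details the paper leaves implicit (that the isomorphism $\theta$ maps subsystems to subsystems and preserves strict inclusions), which is a faithful elaboration rather than a different argument.
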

  \begin{proof}
If \ref{eq:emptyintersection} is not valid, $\E\uvtimes\F$ is a proper subsystem of $\E\otimes\F$. If both were isomorphic, iteration of this observation would yield an infinite chain of Arveson subsystems in  $\mathscr{S}(\E\uvtimes\F)$. 
  \end{proof}

  \begin{corollary}
In the following cases we have that $ \E\uvtimes\F= \E\otimes\F$:
    \begin{enumerate}
    \item One of $\E$ or $\F$ is type $\mathrm{I}$.
    \item $Z$ is countable $\M^{u}$-a.s. or $\M^v$-a.s.
   \end{enumerate}
  \end{corollary}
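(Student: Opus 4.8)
The plan is to read both items off the random-set criterion of Proposition~\ref{cor:alsoZhatdoesit}. Writing $Z_1,Z_2$ for the two independent random sets with marginals $\M^u$ and $\M^v$ (the sets attached to the reference units of $\E$ and $\F$), that proposition tells us $\E\uvtimes\F=\E\otimes\F$ precisely when $Z_1\cap Z_2=\emptyset$ holds $\M^u\otimes\M^v$-almost surely, equivalently when $\widehat{Z_1}\cap\widehat{Z_2}=\emptyset$ almost surely. So in each case it suffices to verify one of these empty-intersection statements.

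I would dispose of (ii) first, as it carries the analytic content. Suppose $Z$ is countable $\M^u$-a.s. (the $\M^v$ case being symmetric). Then for $\M^u\otimes\M^v$-almost every pair $(Z_1,Z_2)$ the set $Z_1$ is countable, while by spatiality of $(\F,v)$ the set $Z_2$ is a stationary factorising random set different from $[0,1]$ almost surely. Conditioning on $Z_1$ and invoking \cite[Proposition 4.4]{Lie09a}—exactly as in the proof of Proposition~\ref{cor:alsoZhatdoesit}, where such a random set almost never meets a prescribed countable set—yields $Z_1\cap Z_2=\emptyset$ for almost every $Z_2$; by Fubini this upgrades to $Z_1\cap Z_2=\emptyset$ $\M^u\otimes\M^v$-a.s., and the criterion gives $\E\uvtimes\F=\E\otimes\F$.

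For (i) I would show that type $\mathrm{I}$ of $\E$ forces the associated derived set to vanish. If $\E$ is type $\mathrm{I}$ then $\E^\U=\E$, and for the full system the projection collapses to $\mathrm{P}^\E_{s,t}=\unit$, so the random set attached to $\E$ is empty almost surely. On the other hand, the Example above (the case $\G_2=\E^\U$, see \cite[Proposition 3.33]{Lie09a}) identifies the random set of $\E^\U$ with $\widehat Z$ for $Z$ distributed according to $\M^u$. Combining the two, $\widehat Z=\emptyset$ $\M^u$-a.s. This already gives $\widehat{Z_1}\cap\widehat{Z_2}=\emptyset$ a.s. and hence the conclusion via the second form of the criterion; equivalently, a closed subset of the compact interval $[0,1]$ with no accumulation points is finite, so $Z$ is finite, hence countable, $\M^u$-a.s., and (ii) applies. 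The case where $\F$ is type $\mathrm{I}$ is symmetric.

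The step I expect to demand the most care is the implication ``type $\mathrm{I}\Rightarrow\widehat Z=\emptyset$ a.s.'': it relies on reading the Example correctly so that the \emph{marginal} random set of $\E^\U$ is $\widehat Z$, together with the elementary but essential topological fact that an infinite closed subset of $[0,1]$ must accumulate. A secondary point is the measure-theoretic bookkeeping in (ii), where \cite[Proposition 4.4]{Lie09a} is applied to the random—rather than deterministic—countable set $Z_1$; this is legitimate precisely because $Z_1$ and $Z_2$ are independent under $\M^u\otimes\M^v$, so one may condition on $Z_1$ and then integrate.
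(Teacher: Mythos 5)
Your proposal is correct and follows essentially the same route as the paper: both items are reduced to the criterion of Proposition~\ref{cor:alsoZhatdoesit}, with case (ii) handled via \cite[Proposition 4.4]{Lie09a} plus Fubini, and case (i) via $\widehat Z=\emptyset$ a.s.\ and the second form of the criterion. The only (immaterial) difference is in case (i), where the paper cites \cite[Proposition 3.33]{Lie09a} for a.s.\ finiteness of $Z$ to get $\widehat Z=\emptyset$, while you derive $\widehat Z=\emptyset$ directly by combining the Example's identification of the random set of $\E^\U$ with $\widehat Z$ and the observation that the full system's projections are the identity.
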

  \begin{proof}
    1. Suppose $\F$ is type $\mathrm{I}$. Then $\widehat{Z}=\emptyset$  $\M^v$-a.s., since $Z$ is  $\M^v$-a.s.\ finite by \cite[Proposition 3.33]{Lie09a}. \ref{eq:emptyintersectionb} gives the desired conclusion.

2. \cite[Proposition 4.4]{Lie09a} shows that for any countable $Z_2\in \mathcal{C}_{[0,1]}$ $Z\cap Z_2=\emptyset$ for  $\M^u$-a.a. $Z$. This yields again the conclusion.

  \end{proof}

\section{The spatial product does not depend on the units}

A direct consequence of \ref{cor:alsoZhatdoesit} is that $\E\uvtimes\F$ is intrinsic, i.e. it does not depend on the choice of $u$ and $v$. You can find  another a bit  more complicated  formulation of the proof without explicit reference to random sets in \cite[Theorem 3.1]{BLMS11}.    

  \begin{theorem}\label{thm1}
Let $(\E, u)$, $(\E, u')$, $(\F, v)$ and  $(\F, v')$ be  spatial Arveson systems.

Then
\begin{displaymath}
  \E\uvtimes\F=\E\uvptimes\F.
\end{displaymath}
  \end{theorem}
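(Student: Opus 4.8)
The plan is to reduce the claimed equality $\E\uvtimes\F=\E\uvptimes\F$ to the random-set criterion furnished by Proposition~\ref{cor:alsoZhatdoesit}, and then to show that this criterion is insensitive to the choice of reference units. The key observation is that Proposition~\ref{thm:inclusion} expresses the defining projection of the spatial product purely in terms of the event $\set{Z_1\cap Z_2\cap[s,t]=\emptyset}$ under the joint measure type $\M^{\E\otimes v,u\otimes\F}$. By the Example following Proposition~1 (on $J_{u,\E^\U}$), the relevant marginals collapse: the subsystem $u\otimes\F$ sees the first coordinate through $\E^\U$-type data and the unit $u$, and symmetrically for $\E\otimes v$. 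Concretely, I would first argue that the pair of random sets $(Z_1,Z_2)$ governing the embedding can be replaced, up to the passage to limit points, by the canonical sets attached to $\E^\U$ and $\F^\U$ rather than to the particular units $u,v$.

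\emph{First step.} I would make precise that $\M^u$, the measure type attached to the one-dimensional subsystem $\mathbb{C}u\subseteq\E$, has limit-point set $\widehat{Z}$ that does not depend on $u$. The heuristic is that $\widehat{Z}$ is the set of accumulation points, and accumulation structure is a property of the Arveson subsystem generated rather than of the chosen section; by the Example, $J_{u,\E^\U}(f)=J_u(g)$ with $g(Z)=f(Z,\widehat Z)$, so $\widehat Z$ is exactly the random set associated with $\E^\U$, which is manifestly unit-independent. Thus $\widehat{Z_1}$ depends only on $\E$ (through $\E^\U$) and $\widehat{Z_2}$ only on $\F$, not on the references $u,u',v,v'$.

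\emph{Second step.} I would invoke Proposition~\ref{cor:alsoZhatdoesit} in its second (equivalent) form: $\E\uvtimes\F=\E\otimes\F$ holds if and only if $\widehat{Z_1}\cap\widehat{Z_2}=\emptyset$ almost surely with respect to $\M^u\otimes\M^v$. Combining with the first step, the condition $\widehat{Z_1}\cap\widehat{Z_2}=\emptyset$ is phrased entirely in terms of the $\E^\U$- and $\F^\U$-random sets, whose joint measure type is independent of $u,v$. Hence the criterion has the same truth value whether one uses $(u,v)$ or $(u',v')$. More strongly, since the criterion already determines whether the spatial product fills the whole tensor product, and since the embedding projection $\mathrm{P}^{\E\uvtimes\F}_{0,1}$ of Proposition~\ref{thm:inclusion} is itself a function of the unit-independent limit-point sets, both spatial products are realised as the \emph{same} subsystem of $\E\otimes\F$, giving the equality (not merely isomorphism) asserted in the theorem.

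\emph{Main obstacle.} The delicate point is passing from ``the defining projection is a function of $\widehat{Z_1},\widehat{Z_2}$'' to the genuine identity of subsystems. Proposition~\ref{thm:inclusion} writes $\mathrm{P}^{\E\uvtimes\F}_{0,1}=J_{\E\otimes v,u\otimes\F}(\chfc{\set{Z_1\cap Z_2=\emptyset}})$, but a priori the joint measure type $\M^{\E\otimes v,u\otimes\F}$ and the isomorphism $J$ still carry the units $u,v$ in their definition. The crux is therefore to show that on the relevant $\sigma$-subalgebra the event $\set{Z_1\cap Z_2=\emptyset}$ reduces, via the identity $Z_1\cap Z_2=\widehat{Z_1}\cap\widehat{Z_2}$ almost surely established in the proof of Proposition~\ref{cor:alsoZhatdoesit}, to the unit-free event $\set{\widehat{Z_1}\cap\widehat{Z_2}=\emptyset}$, and that the value $J_{\E\otimes v,u\otimes\F}$ assigns to this event coincides for the two unit choices. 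I expect this to follow by exhibiting both projections as multiplication by the same indicator on a common direct-integral model driven by the $\E^\U,\F^\U$ random sets, but verifying that the two models agree as subalgebras of $\B((\E\otimes\F)_1)$ — rather than merely abstractly isomorphic — is where the real work lies.
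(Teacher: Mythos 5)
Your proposal follows essentially the same route as the paper's proof: express $\mathrm{P}^{\E\uvtimes\F}_{s,t}$ via Proposition \ref{thm:inclusion}, replace $Z_1\cap Z_2$ by $\widehat{Z_1}\cap\widehat{Z_2}$ almost surely (as in the proof of Proposition \ref{cor:alsoZhatdoesit}), and observe that what remains involves only the type $\mathrm{I}$ parts $\E^\U$, $\F^\U$. The one point left open is the obstacle you flag at the end, and it is in fact no obstacle: the Example you invoke (\cite[Proposition 3.33]{Lie09a}) is already an identity of \emph{operators} in $\B(\E_1)$, not merely a statement about distributions or abstractly isomorphic models. It asserts $J_{u,\E^\U}(f)=J_u(g)$ with $g(Z)=f(Z,\widehat Z)$, both sides lying in the same von Neumann subalgebra of $\B(\E_1)$; taking $f$ depending only on the second variable gives $J_u(h\circ\widehat{\cdot\,})=J_{\E^\U}(h)$ as operators. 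Applying this identity in each tensor factor converts the formula of Proposition \ref{thm:inclusion} directly into
\begin{displaymath}
\mathrm{P}^{\E\uvtimes\F}_{s,t}
=J_{\E\otimes v,u\otimes\F}\left(\chfc{\set{(Z_1,Z_2):\widehat{Z_1}\cap\widehat{Z_2}\cap[s,t]=\emptyset}}\right)
=J_{\E\otimes\F^\U,\E^\U\otimes\F}\left(\chfc{\set{(Z_1,Z_2):Z_1\cap Z_2\cap[s,t]=\emptyset}}\right),
\end{displaymath}
whose right-hand side makes no reference to $u$ or $v$; equality of the projection families then gives equality (not just isomorphism) of the subsystems, and no separate direct-integral comparison is needed. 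This is precisely the paper's two-line argument. One caution on your second step: comparing the truth values of the criterion of Proposition \ref{cor:alsoZhatdoesit} for $(u,v)$ and $(u',v')$ is not by itself sufficient --- if both spatial products are proper subsystems, agreement of the criterion does not rule out that they are \emph{different} proper subsystems --- but your ``more strongly'' clause, i.e.\ the unit-independence of the projection $\mathrm{P}^{\E\uvtimes\F}_{s,t}$ itself, is the argument that actually carries the proof.
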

  \begin{proof}
We know from \cite[Proposition 3.33]{Lie09a} for $f\in L^\infty(\M^u)$ that $J_u(f\circ \widehat{\cdot})=J_{\E^\mathscr{U}}(f)$. By \ref{prop:alsoZhatdoesit} this shows
    \begin{eqnarray*}
      \mathrm{P}^{\E\uvtimes\F}_{s,t}&=&J_{\E\otimes v,u\otimes\F}(\chfc{\set{(Z_1,Z_2):\widehat{Z_1}\cap \widehat{Z_2}\cap[s,t]=\emptyset}})\\
&=&J_{\E\otimes \F^{\mathscr{U}},\E^{\mathscr{U}}\otimes\F}(\chfc{\set{(Z_1,Z_2): Z_1\cap Z_2\cap[s,t]=\emptyset}}).
    \end{eqnarray*}
The last expression  is independent of $u$ and $v$.
  \end{proof}

  \begin{corollary}
    It holds
    \begin{displaymath}
      \E\uvtimes\F=(\E^\U\otimes\F)\vee(\E\otimes\F^\U).
    \end{displaymath}
  \end{corollary}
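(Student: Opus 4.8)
The plan is to prove the two inclusions separately, with the nontrivial direction resting entirely on the unit-independence established in Theorem \ref{thm1}. The inclusion $\E\uvtimes\F\subseteq(\E^\U\otimes\F)\vee(\E\otimes\F^\U)$ is immediate from the definitions: since $u$ is a normalised unit of $\E$ we have $\mathbb{C}u\subseteq\E^\U$, whence $u\otimes\F\subseteq\E^\U\otimes\F$; symmetrically $\mathbb{C}v\subseteq\F^\U$ gives $\E\otimes v\subseteq\E\otimes\F^\U$. Forming the join of these two containments and recalling $\E\uvtimes\F=(u\otimes\F)\vee(\E\otimes v)$ yields the claim.

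For the reverse inclusion I would exploit Theorem \ref{thm1}. Fix an arbitrary normalised unit $w\in\U_1(\E)$; then $(\E,w)$ and $(\F,v)$ are spatial and Theorem \ref{thm1} gives $\E\,{}_{w}{\otimes}_{v}\,\F=\E\uvtimes\F$. As $w\otimes\F\subseteq\E\,{}_{w}{\otimes}_{v}\,\F$ by the very definition of the spatial product, we obtain $w\otimes\F\subseteq\E\uvtimes\F$ for every $w\in\U_1(\E)$. Since $\mathscr{S}(\E\otimes\F)$ is a complete lattice and $\E\uvtimes\F$ is an upper bound of the family $\set{w\otimes\F:w\in\U_1(\E)}$, it dominates their join $\bigvee_{w}(w\otimes\F)$. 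Varying instead the reference unit of $\F$ gives $\E\otimes w'\subseteq\E\uvtimes\F$ for every $w'\in\U_1(\F)$, hence $\E\uvtimes\F\supseteq\bigvee_{w'}(\E\otimes w')$.

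It then remains to identify these joins with the tensor factors, i.e.\ to show $\bigvee_{w\in\U_1(\E)}(w\otimes\F)=\E^\U\otimes\F$ and symmetrically $\bigvee_{w'\in\U_1(\F)}(\E\otimes w')=\E\otimes\F^\U$. The inclusion $\subseteq$ is clear. For $\supseteq$ I would use the product structure: a fibre vector of $\E^\U\otimes\F$ at time $t$ is a limit of products $(w^{(1)}_{t_1}\otimes\eta_1)\otimes\cdots\otimes(w^{(n)}_{t_n}\otimes\eta_n)$ over partitions $t=t_1+\dots+t_n$ with each $w^{(i)}$ a normalised unit; regrouping the $\E$- and $\F$-factors exhibits such a vector as lying in $\bigvee_{w}(w\otimes\F)$, and the statement that the products $w^{(1)}_{t_1}\otimes\cdots\otimes w^{(n)}_{t_n}$ span $\E^\U_t$ is precisely the definition $\E^\U=\bigvee_{w}\mathbb{C}w$ of the type $\mathrm{I}$ part. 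Combining both inclusions gives $\E\uvtimes\F=(\E^\U\otimes\F)\vee(\E\otimes\F^\U)$.

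The main obstacle is this last identification $\bigvee_{w}(w\otimes\F)=\E^\U\otimes\F$. One must resist the naive fibrewise argument: the normalised units of $\E$ do \emph{not} span the single fibre $\E^\U_t$ (at a fixed time they only generate a one-mode piece of the Fock fibre), so the equality genuinely requires the product/partition structure together with the completeness of the subsystem lattice $\mathscr{S}$ from \cite[Theorem 5.7]{Lie09a}, rather than a mere closed linear span at one time. Everything else is formal lattice manipulation once Theorem \ref{thm1} is available.
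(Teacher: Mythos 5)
Your proof is correct, but it takes a genuinely different route from the paper's. The paper gives the corollary no separate proof because it is read off directly from the last display in the proof of \ref{thm1}: there one obtains
\begin{displaymath}
  \mathrm{P}^{\E\uvtimes\F}_{s,t}=J_{\E\otimes \F^{\U},\E^{\U}\otimes\F}\bigl(\chfc{\set{(Z_1,Z_2):\, Z_1\cap Z_2\cap[s,t]=\emptyset}}\bigr),
\end{displaymath}
and the same partition-limit computation as in the proof of \ref{thm:inclusion}, run for the commuting pair of subsystems $\E\otimes\F^\U$ and $\E^\U\otimes\F$ in place of $\E\otimes v$ and $u\otimes\F$, identifies the right-hand side as the projection family of $(\E^\U\otimes\F)\vee(\E\otimes\F^\U)$; since the family $\mathrm{P}_{s,t}$ determines the subsystem, equality follows. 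You instead use \ref{thm1} purely as a black box: unit-independence gives $w\otimes\F\subseteq\E\,{}_{w}{\otimes}_{v}\,\F=\E\uvtimes\F$ for every $w\in\U_1(\E)$, and symmetrically for $\F$, after which everything reduces to the lattice identity $\bigvee_{w\in\U_1(\E)}(w\otimes\F)=\E^\U\otimes\F$, which you establish via the partition-product description of joins from \cite[Theorem 5.7]{Lie09a}. Your caveat about the naive fibrewise argument is well taken --- at a fixed time the units of $\E$ span only the exponential vectors of constant functions, not $\E^\U_t$ --- and your regrouping argument (for a fixed partition the $\F$-legs already span $\F_t$, while the unit-products over all partitions span $\E^\U_t$) closes exactly that gap. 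As for what each approach buys: yours never touches random sets and stays inside the subsystem lattice $\mathscr{S}(\E\otimes\F)$, so it is portable to any framework in which intrinsicness of the spatial product is known (this is close in spirit to \cite{BLMS11}); the paper's derivation is essentially free once the random-set machinery is in place, and it yields more than the bare equality, namely the explicit projection formula for $\E\Utimes\F$ in terms of $J_{\E\otimes\F^\U,\E^\U\otimes\F}$, which is what the later sections actually exploit.
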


Thus, we use $\E\Utimes\F$ as new symbol for $\E\uvtimes\F$.  This is also consistent with the amalgamation procedure from \cite{BM09a}.
Note that \cite{BLMS11} introduced the symbol $\E\otimes^0\F$. 
 \section{From measure types to   Hilbert spaces}
Before we study special examples of Arveson systems, we want to present the general mechanism for constructing those examples. It dates back  to \textsc{Tsirelson} \cite{Tsi00}.

If $\mu\sim\mu'$ are two measures on the same space (here $\mathcal{C}_{[0,1]}$), the abelian von Neumann algebras  $L^\infty(\mu)$ and $L^\infty(\mu')$ coincide, and we observe a canonical space $L^\infty(\M)$ if $\M$ is the measure type of $\mu$ and $\mu'$. Now we want to present an intrinsic  construction of a Hilbert space $L^2(\M)$. In this we follow   
 \cite{Tsi00,Tsi03} or originally  \cite{Acc76}.

 Define  for any $\mu,\mu'\in\M$ a unitary  ${U_{\mu,\mu'}}:{L^2(\mu)}\mapsto{L^2(\mu')}$  through 
\begin{equation}
  \label{eq:defUmumus}
U_{\mu,\mu'}\psi(Z)=\sqrt{\frac{\mathrm{d}\mu'}{\mathrm{d}\mu\mskip2mu}(Z)}\psi(Z)  \dmf{\psi\in L^2(\mu),\mu-\mbox{a.a.~}Z\in \mathcal{C}_{[0,1]}}.
\end{equation}
Then 
 \begin{equation}
\label{eq:defL2measuretype}
   L^2(\M)=\set{(\psi_\mu)_{\mu\in\M}: \psi_\mu\in L^2(\mu)\forall \mu\in\M,\psi_{\mu'}=U_{\mu,\mu'}\psi_\mu \forall \mu,\mu'\in\M }
 \end{equation}
is a Hilbert space with the inner product
\begin{displaymath}
  \scpro{\psi}{\psi'}_{L^2(\M)}=\int \overline{\psi_\mu}\psi'_\mu\mathrm{d}\mu.
\end{displaymath}
This inner product is independent from the choice of $\mu\in\M$. 

Now we obtain 
\def\citt{\cite[Proposition 4.3]{Lie09a}}
\begin{proposition}[\citt]
Let $\M$ be a stationary factorising measure type on $\mathcal{C}_{[0,1]}$. Define operators $V_{s,t}:L^2(\M_{0,s})\otimes L^2(\M_{0,t})\mapsto L^2(\M_{0,s+t})$ for $0\le s,t$, $s+t\le1$ through  
  \begin{displaymath}
   (V_{s,t} \psi\otimes  \psi')_{\mu_s\ast(\mu'_t+s)}(Z)=\psi_{\mu_s}(Z\cap[0,s])\psi'_{\mu'_t}(Z\cap[s,s+t]-s)
  \end{displaymath}
Then $V_{s,t}$ are well-defined unitaries and give rise to an Arveson   system $\E=\E^\M=\sg t\E$ with   $\E_t=L^2(\M_{0,t})$ for  $0\le t\le1$.

A unit $\sg tu$ of $\E$ is determined  by  
\begin{displaymath}
  (u_t)_{\mu_{0,t}}(Z)=\mu_{0,t}(\set{\emptyset})^{-1/2}\,\chfc{\set{\emptyset}}(Z)
\end{displaymath}
for $t\in[0,1]$. 
Then $\M^u=\M$.\label{prop:existencepsL1M}
\end{proposition}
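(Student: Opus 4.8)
The plan is to build everything from one structural identity: the convolution $\ast$ attached to $\cup$ is exactly the image of a product measure under the union map. Concretely, for $\mu_s\in\M_{0,s}$ and $\mu'_t\in\M_{0,t}$ the measure $\mu_s\ast(\mu'_t+s)$ is the push-forward of $\mu_s\otimes\mu'_t$ under $(Z_1,Z_2)\mapsto Z_1\cup(Z_2+s)$, whose almost sure inverse is the restriction map $Z\mapsto(Z\cap[0,s],\,Z\cap[s,s+t]-s)$ (the overlap at the single point $s$ is negligible, since a stationary factorising random set almost never contains a prescribed point). The stationary factorising axioms give first that this convolution again lies in $\M_{0,s+t}$, because $\M_{0,s+t}=\M_{0,s}\ast\M_{s,s+t}$ and $\M_{0,t}+s=\M_{s,s+t}$. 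Thus the defining formula for $V_{s,t}$ at least parses into an element of $L^2(\M_{0,s+t})$, and the whole proof reduces to transporting the required facts across this push-forward.

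First I would settle well-definedness, which I expect to be the main obstacle. The formula fixes representatives $\mu_s,\mu'_t$ and prescribes the output only on the single representative $\mu_s\ast(\mu'_t+s)$; I must check that passing to $\nu_s\sim\mu_s$, $\nu'_t\sim\mu'_t$ yields the same element of $L^2(\M_{0,s+t})$, i.e.\ that the two prescriptions are intertwined by $U_{\mu_s\ast(\mu'_t+s),\,\nu_s\ast(\nu'_t+s)}$. Writing both sides out with \ref{eq:defUmumus}, this amounts exactly to the factorisation of Radon--Nikodym derivatives
\[
 \frac{\d(\nu_s\ast(\nu'_t+s))}{\d(\mu_s\ast(\mu'_t+s))}(Z)=\frac{\d\nu_s}{\d\mu_s}(Z\cap[0,s])\cdot\frac{\d\nu'_t}{\d\mu'_t}(Z\cap[s,s+t]-s),
\]
which holds because the derivative of a product measure factorises and the union map is an almost sure isomorphism of measure spaces; the square roots in \ref{eq:defUmumus} then match term by term.

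Granting this, unitarity is a direct computation: $\norm{V_{s,t}(\psi\otimes\psi')}^2=\int|\psi_{\mu_s}(Z\cap[0,s])|^2\,|\psi'_{\mu'_t}(Z\cap[s,s+t]-s)|^2\,\d(\mu_s\ast(\mu'_t+s))$, and the push-forward identity turns this into $\int|\psi_{\mu_s}|^2\d\mu_s\cdot\int|\psi'_{\mu'_t}|^2\d\mu'_t=\norm{\psi}^2\norm{\psi'}^2$; surjectivity follows since product functions are total in $L^2(\mu_s)\otimes L^2(\mu'_t)$ and the union map is an almost sure bijection, while measurability of the family is free by the criterion recalled earlier. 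The Arveson associativity axiom comes the same way: on $\psi\otimes\psi'\otimes\psi''$ both composites produce the single function $Z\mapsto\psi(Z\cap[0,r])\,\psi'(Z\cap[r,r+s]-r)\,\psi''(Z\cap[r+s,r+s+t]-(r+s))$, using associativity of $\cup$ and of $\ast$. For the unit, $u_t=\mu_{0,t}(\set\emptyset)^{-1/2}\chfc{\set\emptyset}$ is a genuine element of $L^2(\M_{0,t})$ because the atom $\set\emptyset$ has derivative $\mu'(\set\emptyset)/\mu(\set\emptyset)$, so the normalisation is precisely the compatibility factor (this needs $\mu_{0,t}(\set\emptyset)>0$, which holds since $t\mapsto\mu_{0,t}(\set\emptyset)$ is multiplicative under $\ast$); and $V_{s,t}(u_s\otimes u_t)$ is supported on $\set{Z=\emptyset}$ with constant value $(\mu_s(\set\emptyset)\,\mu'_t(\set\emptyset))^{-1/2}=(\mu_s\ast(\mu'_t+s))(\set\emptyset)^{-1/2}$, i.e.\ equals $u_{s+t}$.

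Finally, for $\M^u=\M$, the key observation is that on $\E_1=L^2(\M_{0,1})$ the vacuum projection $\mathrm{P}^{\mathbb{C}u}_{s,t}=\unit\otimes\mathrm{Pr}_{\mathbb{C}u_{t-s}}\otimes\unit$ is simply multiplication by $\chfc{\set{Z\cap[s,t]=\emptyset}}$: in the factorised coordinates the rank-one projection onto the middle vacuum sends $\Psi(Z_1,Z_2,Z_3)$ to $\chfc{\set\emptyset}(Z_2)\,\Psi(Z_1,\emptyset,Z_3)$, and the indicator forces $Z_2=\emptyset$ wherever it is nonzero, so this coincides with $\chfc{\set{Z_2=\emptyset}}\Psi$. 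These multiplication operators commute, so \ref{th:existencemuomega} applies, and any faithful normal state $\omega$ on $\B(\E_1)$ restricts on $L^\infty(\M_{0,1})$ to integration against an almost everywhere positive density $\rho$ relative to some $\mu\in\M$. Hence $\mu_\omega(\set{Z:Z\cap\bigcup_i[s_i,t_i]=\emptyset})=\omega(\prod_i\mathrm{P}^{\mathbb{C}u}_{s_i,t_i})=\int_{\set{Z\cap\bigcup_i[s_i,t_i]=\emptyset}}\rho\,\d\mu$, and since the hitting events generate the Borel $\sigma$-field of $\mathcal{C}_{[0,1]}$ this forces $\mu_\omega=\rho\,\mu\sim\mu$, whence the type $\M^u$ equals $\M$.
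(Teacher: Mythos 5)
You should first note that the paper itself contains no proof of this statement: it is imported verbatim as \cite[Proposition 4.3]{Lie09a}, so there is no in-paper argument to compare against. Judged on its own merits, your architecture is the right one and is essentially the standard proof: the convolution $\mu_s\ast(\mu'_t+s)$ is the push-forward of $\mu_s\otimes\mu'_t$ under the union map, whose a.s.\ inverse is restriction; from this one identity you correctly derive well-definedness (via the factorising Radon--Nikodym derivative), isometry and surjectivity, associativity, the unit, and — with a nice, correct computation showing that $\mathrm{P}^{\mathbb{C}u}_{s,t}$ acts as multiplication by $\chfc{\set{Z:Z\cap[s,t]=\emptyset}}$ together with a $\pi$-system argument — the equality $\M^u=\M$.

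There are, however, two concrete gaps, both with the same root. First, the load-bearing fact ``a stationary factorising random set almost never contains a prescribed point'' is asserted without proof and is \emph{false} under the bare hypotheses as you state them: the measure type of $\delta_{[0,1]}$ is stationary factorising and contains every point. This fact is exactly \cite[Proposition 4.4]{Lie09a}, which this paper cites elsewhere, and always with the caveat that $Z\ne[0,1]$ almost surely; without it (i.e.\ without a.s.\ injectivity of the union map, equivalently a.s.\ agreement of the events $\set{s\in Z_1}$ and $\set{0\in Z_2}$), your isometry computation collapses — the change of variables in $\int|\psi(Z\cap[0,s])|^2|\psi'(Z\cap[s,s+t]-s)|^2\,\d(\mu_s\ast(\mu'_t+s))$ is precisely what is at stake. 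You need to exclude the degenerate type explicitly (the unit statement implicitly forces this) and then either cite or prove the avoidance property; it is a genuine lemma, not a throwaway remark. Second, your justification that $\mu_{0,t}(\set\emptyset)>0$ ``since $t\mapsto\mu_{0,t}(\set\emptyset)$ is multiplicative under $\ast$'' is a non sequitur: the identically vanishing function is also multiplicative, and indeed for the degenerate type the atom \emph{is} zero and the asserted unit does not exist, so the proposition as literally stated fails there. A correct argument runs, for instance: if $\mu_{0,t}(\set\emptyset)=0$ for some $t$, then factorising and induction produce a nested sequence of intervals each a.s.\ hit by $Z$, compactness yields a fixed point a.s.\ contained in $Z$, and stationarity then forces $Z=[0,1]$ a.s.\ — so positivity of the atom is equivalent to non-degeneracy, the very assumption you must isolate.
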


All   examples of such measure types used in this paper  come from hitting sets of strong Markov processes $\sg tX$.  Basically, such sets are constructed by
\begin{displaymath}
  Z=\set{t+\tau:X_t=x^*}
\end{displaymath}
where $x^*$ is a suitable point and $\tau$ is a  random variable independent from $\sg tX$ with law equivalent to Lebesgue measure on $\NRp$. Please note that only almost sure properties of these random sets are important, not the special probabilistic structure. E.g., without loss of generality, we may assume $\tau\sim \mathrm{Exp_1}$.  

If $x^*$ is a suitable point then there is a nonnegative right-continuous increasing process $\sg sM$ with stationary independent increments upto a certain life time such that conditional on $X_0=x^*$,
\begin{displaymath}
  \set{t:X_t=x^*}=\overline{\set{M_s:s\ge0} }.
\end{displaymath}
$\sg sM$ is called \emph{subordinator}, see \cite{Ber02} for a thorough account on these processes and their range.

For a coarse classification of those random sets, remember the definition of Hausdorff-dimension of a set $Z$.  For $\alpha>0$ the $\alpha$-dimensional \emph{Hausdorff measure}  of a Borel set $Z$  is defined as 
\begin{equation}
  \label{eq:defHausdorff}
 H^\alpha(Z)=\sup_{\varepsilon>0}H^\alpha_\varepsilon(Z),
\end{equation}
where
  \begin{equation}
  \label{eq:defHausdorffa}
 H^\alpha_\varepsilon(Z)= \inf\set{\sum_{i\in\mathbb{N}}\Delta(B_i)^\alpha:\mbox{$\sequ iB$ are sets with  $\Delta(B_i)\le\varepsilon$ and $\bigcup_{i\in\mathbb{N}}B_i\supseteq Z$}},
  \end{equation}
denoting $\Delta(B)$ the diameter of $B$. Then  the \emph{Hausdorff dimension} $\dim_{H}Z$ of a Borel set $Z$ is  defined by 
\begin{displaymath}
  \dim_H Z=\inf\set{\alpha>0:H^{\alpha}(Z)>0  }.
\end{displaymath} 
We consider even more special sets, coming from Bessel diffusions: 
 \begin{example}[\cite{Tsi00}]\label{ex:bessel0}
Let $\sg t{X^{(d)}}$ be a Bessel diffusion  with parameter $d >0$ starting in a point $x_0>0$. This means $\sg t{X^{(d)}}$  is a strong Markov (diffusion) process on $\mathbb{R}_{\ge0}$ with generator
\begin{displaymath}
 \frac{\mathrm{d} \mathbb{E}_xf(X^{(d)}_t)}{\mathrm{d}t}\big\vert_{t=0}= \frac12f''(x)+\frac{d -1}{2x}f'(x).
\end{displaymath}
Throughout this work, $\mathbb{E}_x$ and $\mathbb{P}_x$ denote  the conditional expectation and conditional probability given $X_0=x$ respectively.
For $d \in \mathbb{N}$ we could realise this process via  $X^{(d)}_t=\norm{B^d_t}$, where $\sg t{B^d}$ is $d$-dimensional Brownian motion.
In the general case, the Bessel process   is also  defined as the (unique) nonnegative  solution of the stochastic differential equation  
\begin{displaymath}
\mathrm{d}X_t = \mathrm{d}W_t + \frac{d -1}{2}\frac 1{X_t}\mathrm{d}t. 
\end{displaymath}
Then we write $\sg t{X^{(d)}}\sim \mathrm{BES}(d,x_0)$.

According to the above mentioned scheme, define a random closed set $Z\in \C_{[0,1]}$ by 
\begin{displaymath}
  Z=\set{t\ge0:X^{(d)}_t=0}\cap[0,1]
\end{displaymath}
Observe that in this case the subordinator is stable of index $d $ \cite{Ber02}.
This means
\begin{displaymath}
  \mathbb{E}\mathrm{e}^{-\lambda M_s}=\mathrm{e}^{s\lambda^{d }}.
\end{displaymath}
Moreover,  for $d  \ge2$ $Z=\emptyset$ a.s. So we restrict to $d\in(0,2)$ for the rest of the paper.

Then the measure type 
$\M_d =\set{\mu:\mu\sim \mathscr{L}(Z)}$, which does not depend on $x_0$, is stationary factorising. Moreover, 
$\M_d $-a.s. the set $Z$ has  Hausdorff dimension $1-\frac d 2$ near every of its points. This means  for all $(s,t)$ with $Z\cap(s,t)\ne\emptyset$ it holds  $\dim_{H} (Z\cap(s,t))=1-\frac d2$.

As a consequence $Z$ has no isolated points: $\widehat Z=Z$. This immediately implies that the Arveson  system $\sg t\E$ determined by  $\E_t=L^2(\M_{0,t})$, $t\in[0,1]$, is type $\mathrm{II}_0$ \cite[Corollary 4.7]{Lie09a}, \cite{Tsi00}.   In the sequel, we denote this Arveson system by $\E^d$. Further, $\widehat Z=Z$ $\M_d-$a.s.\ also implies  $\M_d=\M^\U$. The latter measure type is an invariant of $\E$ by \cite[Theorem 3.22]{Lie09a} and we conclude that $\E^d\not\cong\E^{d'}$ for $d'\ne d$ (as long as both are $<2$), see also \cite{Tsi00}.

One more construction is useful in the sequel: The local time of the diffusion in 0. This local time, denoted $\sg tL$,  is the inverse of the subordinator $\sg sM$:
\begin{displaymath}
  L_t=\sup\set{s>0:\tau+M_s\le t}   
\end{displaymath}
Since $t\mapsto L_t$ is a random increasing nonnegative function, it is the cumulative distribution function of a random measure. It is easy to see that the support of this measure is just $Z$.  By results of \cite{FP71} this measure is just the restriction of a certain Hausdorff measure to $Z$. Thus this random measure depends on $Z$ only and we write $L_t(Z)$.
 \end{example}

\section{Bessel zeros yields primitive Arveson systems }

\begin{definition}
  A spatial Arveson system $(\E,u)$ is primitive, if $\mathscr{S}(\E)=\set{0,u,\E}$.

  A spatial Arveson system $(\E,u)$ is prime (spatially prime), if for Arveson systems $\F,\G$ with  $\F\otimes \G=\E$ ($\F\Utimes\G=\E$) it follows that either $\F$ or $\G$ is trivial, i.e.\ it equals $(\mathbb{C})_{t\ge0}$.  
\end{definition}
 According to \cite[Proposition 4.32, Note 4.33]{Lie09a} for all $k=1,2\dots$ there are uncountably many examples of type $\mathrm{II}_k$  Arveson systems which are prime and spatially prime. We now focus on examples of prime type $\mathrm{II}_0$ Arveson systems. 

In \cite{JMP11a} there was derived  a useful criterion for Arveson systems to be prime:

\begin{proposition}
  If for a spatial  Arveson  system $(\E,u)$  the lattice  $\mathscr{S}(\E)$ is totally ordered   then $\E$ is both prime and spatially prime. 

Especially, primitive Arveson systems  are both prime and spatially prime. 
\end{proposition}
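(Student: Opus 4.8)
The plan is to treat primeness and spatial primeness in parallel, reducing both to a single elementary observation about equality of simple tensors, and then to obtain the assertion about primitive systems as the two-step instance of the chain hypothesis. In each case the mechanism is the same: a decomposition of $\E$ supplies two subsystems whose comparability (forced by total order) collapses one of the factors to $(\mathbb{C})_{t\ge0}$.

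First I would settle spatial primeness, where the relevant subsystems are handed to us for free. Suppose $\E=\F\Utimes\G$ for spatial Arveson systems $(\F,a)$ and $(\G,b)$. By the very definition of the spatial product, $a\otimes\G$ and $\F\otimes b$ are Arveson subsystems of $\E$, and in fact $\E=(a\otimes\G)\vee(\F\otimes b)$. Since $\mathscr{S}(\E)$ is totally ordered, these two subsystems are comparable, and I claim either inclusion forces a factor to be trivial. Indeed, if $a\otimes\G\subseteq\F\otimes b$, then for every $g\in\G_t$ the vector $a_t\otimes g$ lies in $\F_t\otimes\mathbb{C}b_t$, so $a_t\otimes g=f\otimes b_t$ for some $f\in\F_t$; equality of two nonzero simple tensors forces $g\in\mathbb{C}b_t$, whence $\dim\G_t=1$ for all $t$ and $\G=(\mathbb{C})_{t\ge0}$. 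The reverse inclusion symmetrically makes $\F$ trivial.

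Next I would treat primeness, $\E=\F\otimes\G$. The single extra point is that $\F$ and $\G$ are not assumed spatial, so the subsystems $a\otimes\G$ and $\F\otimes b$ are not immediately available. Here I would invoke the classical factorisation of units in a tensor product, namely that every unit of $\F\otimes\G$ has the form $a\otimes b$ with $a\in\U_1(\F)$ and $b\in\U_1(\G)$ (equivalently $(\F\otimes\G)^\U=\F^\U\otimes\G^\U$), see \cite{Arv89a,Arv03}. Because $(\E,u)$ is spatial we have $\U_1(\E)\ne\emptyset$, so $\F$ and $\G$ each carry a normalised unit; a routine check that the multiplication $V_{s,t}=V^\F_{s,t}\otimes V^\G_{s,t}$ restricts correctly shows that $a\otimes\G$ and $\F\otimes b$ are genuine subsystems of $\E$, and the comparability argument of the previous paragraph then applies verbatim to force $\F$ or $\G$ to be trivial.

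Finally, if $(\E,u)$ is primitive, then $\mathscr{S}(\E)=\set{0,u,\E}$; since $0\subseteq u\subseteq\E$, this three-element lattice is a chain, hence totally ordered, and the first two parts yield that $\E$ is both prime and spatially prime. The only nonroutine ingredient is the factorisation of units (needed for primeness but not for spatial primeness), which I expect to be the main obstacle to keep honest: I would either cite it directly or recover it from the identity $(\F\otimes\G)^\U=\F^\U\otimes\G^\U$ already implicit in the additivity of the index. Everything else is the simple-tensor lemma together with the hypothesis of total order, and care is only needed to confirm that the two candidate subsystems respect the product structure.
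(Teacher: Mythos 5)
The paper's own ``proof'' of this proposition is the single sentence ``Analogous to \cite{JMP11a}'', so there is no detailed argument in the paper to compare against; your proposal is the natural self-contained version of what that citation points to, and it is essentially correct. The spatial-primeness half and the primitive case are exactly right and need nothing beyond the definitions: $a\otimes\G$ and $\F\otimes b$ are subsystems of $\F\Utimes\G$ by the very definition of the spatial product, total order of $\mathscr{S}(\E)$ makes them comparable, and equality of nonzero simple tensors collapses one factor to $(\mathbb{C})_{t\ge0}$.

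The only delicate point is the one you flag yourself, needed for primeness: that spatiality of $\E=\F\otimes\G$ forces both $\F$ and $\G$ to be spatial. Two remarks on how you justify this. First, the factorisation of units of a tensor product is indeed classical, but it is Arveson's \emph{addition-formula} theorem; the safe citation is \cite{Arv03} (or Arveson's separate addition-formula paper) rather than the memoir \cite{Arv89a}. Second, your fallback --- recovering it from additivity of the index --- does not work as stated: index additivity, as usually formulated (and as asserted in this paper), is a statement about spatial systems, so it is silent about exactly the case you must exclude, namely a non-spatial factor $\G$ with $\F\otimes\G$ spatial. If you want to avoid the citation entirely, there is an elementary substitute: the reduced density operators $\rho_t$ of $u_t$ on $\F_t$ (partial trace over $\G_t$) satisfy $\rho_{s+t}=\rho_s\otimes\rho_t$; hence the top eigenvalue is multiplicative and of the form $\mathrm{e}^{-ct}$, its eigenspace satisfies $V_{s+t}=V_s\otimes V_t$ and so has finite, multiplicative, hence one-dimensional fibres, and a measurable section of top eigenvectors becomes a normalised unit of $\F$ after trivialising the resulting scalar multiplier (measurable $\mathbb{T}$-valued multipliers on $(0,\infty)$ are coboundaries). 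This also shows the fact holds with no spatiality assumption on the factors. With either repair, your argument is complete, and it is almost certainly the same argument intended by the reference to \cite{JMP11a}.
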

\begin{proof}
  Analogous to \cite{JMP11a}. 
\end{proof}

The aim of the present section is the proof of
\begin{theorem}\label{cor:besseltrivial}
  $\E^d$ is primitive for all $0<d<2$. 
\end{theorem}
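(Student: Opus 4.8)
The plan is to combine the type $\mathrm{II}_0$ classification of $\E^d$ with the rigidity of the stable subordinator range and to realise the whole lattice $\mathscr{S}(\E^d)$ as a lattice of stationary factorising random sets. First I would record what type $\mathrm{II}_0$ buys us. Since the index of $\E^d$ is $0$, its type I part is $(\E^d)^\U=u=\sg t{\mathbb{C}u}$, the one–dimensional subsystem spanned by the essentially unique normalised unit, and $\mathscr{S}(u)=\set{0,u}$. Consequently every nonzero subsystem $\G$ has $\G^\U\in\set{0,u}$, so $u$ is the unique minimal nonzero subsystem carrying a unit. The statement to be proved therefore reduces to the single claim: the only stationary factorising random sets arising from subsystems of $\E^d$ are $\emptyset$, $Z$ and $[0,1]$, corresponding to $\E^d$, $u$ and $0$ respectively.

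For the translation I would use that, by the remark following Proposition \ref{thm:inclusion}, the projection $\mathrm{P}^\G_{0,1}=\mathrm{Pr}_{\G_1}$ already determines $\G$, while Theorem \ref{th:existencemuomega} identifies the commutative family $(\mathrm{P}^\G_{s,t})_{0\le s<t\le1}$ with the indicator observables $\chfc{\set{Z^\G\cap[s,t]=\emptyset}}$ of a stationary factorising random set $Z^\G$. The key structural point is that for $\E^d$ this correspondence is \emph{multiplicity free}: because the fibres are the scalar spaces $\E_t=L^2(\M_{0,t})$ of Example \ref{ex:bessel0} and the index is $0$, there is no Fock–type multiplicity in which extra subsystems could hide, so $\mathscr{S}(\E^d)$ embeds as an ordered set into the lattice of stationary factorising random subsets of $[0,1]$. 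For a subsystem with $u\subseteq\G\subseteq\E^d$ the inequalities $\mathrm{P}^u_{s,t}\le\mathrm{P}^\G_{s,t}\le\unit$ give, after coupling via the joint measure of the nested commuting families, $Z^\G\subseteq Z$ almost surely, where $Z$ is the Bessel zero set (recall $\M^u=\M_d$). Thus $Z^\G$ is a stationary factorising random subset of the range of a stable subordinator of index $1-\tfrac d2$.

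The decisive step is a zero–one law: such a $Z^\G$ is almost surely either $\emptyset$ or all of $Z$. Here I would exploit the two symmetries of $Z$ at once, namely the ergodicity of the circular shift (built into the stationarity of the factorising measure type) and the scaling self–similarity of the stable range, so that the event $\set{Z^\G\ne\emptyset}$ becomes a tail/germ event and is forced to have probability $0$ or $1$; since $\widehat Z=Z$ and, by \cite[Proposition 4.4]{Lie09a}, a nontrivial stationary factorising set meets no fixed countable set, there is simply no room for an intermediate set between $\emptyset$ and $Z$. Translating back yields $\G\in\set{u,\E^d}$ for every spatial subsystem containing $u$; running the same argument between $Z$ and $[0,1]$ (equivalently, applying \ref{prop:alsoZhatdoesit} with $\widehat Z=Z$ to a general $Z^\G\subseteq[0,1]$) shows that a nonzero subsystem cannot produce a set strictly between $Z$ and $[0,1]$ either, so $Z^\G\in\set{\emptyset,Z,[0,1]}$ in all cases. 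Hence $\mathscr{S}(\E^d)=\set{0,u,\E^d}$.

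The main obstacle I anticipate lies in step two and three together: rigorously justifying that for the type $\mathrm{II}_0$ system $\E^d$ the assignment $\G\mapsto Z^\G$ is an order isomorphism onto a lattice of random sets — i.e.\ that a factorising subset $Z^\G$ is genuinely controlled by the germ/scaling structure of $Z$ and not by some hidden multiplicity or by a non–spatial (type III) component. This is exactly the place where the hypotheses of Example \ref{ex:bessel0} — that $\widehat Z=Z$ with constant local Hausdorff dimension $1-\tfrac d2$, so that $\E^d$ is type $\mathrm{II}_0$ by \cite[Corollary 4.7]{Lie09a} — must be used in full strength, since for the type $\mathrm{II}_k$ systems with $k\ge1$ the analogous statement fails.
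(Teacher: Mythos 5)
Your overall strategy---encoding subsystems of $\E^d$ as stationary factorising random subsets of the Bessel zero set $Z$ and then proving a dichotomy---is indeed the paper's strategy, but the step you call ``decisive'' is exactly where your argument breaks down. Shift-ergodicity and scaling self-similarity only give zero--one laws for invariant \emph{events}; in particular they tell you that $\mathbb{P}(Z^\G\ne\emptyset)\in\set{0,1}$, which is far weaker than what is needed, namely the pointwise statement that almost surely $Z^\G=\emptyset$ or $Z^\G=Z$. A shift-covariant, restriction-compatible rule selecting a \emph{proper nonempty} random subset of $Z$ is not excluded by any tail/germ-event argument, and your appeal to \cite[Proposition 4.4]{Lie09a} is a non sequitur: that proposition concerns \emph{fixed} countable sets, whereas $Z^\G$ is a random set coupled to $Z$, and $Z$ is uncountable and perfect, so there is plenty of ``room'' a priori. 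The paper closes this gap with \ref{prop:subrandomset}, whose mechanism is genuinely Markovian and not recoverable from symmetries of the measure type alone: for an $\M_d$-local stationary opening $\varphi$ one considers the first hitting time $\tau=\inf\set{t>0:X_t=0}$ of the Bessel process, notes that by locality and shift covariance the event $\set{\tau\in\varphi(Z)}$ is measurable with respect to the germ field of the post-$\tau$ process, and applies the strong Markov property together with the Blumenthal zero--one law; the time symmetry of the associated subordinator gives the analogous statement at last-exit times, and a final argument along rational times $q$ (the variables $Y_q^\pm$), using that $\varphi(Z)$ is \emph{closed}, excludes the mixed cases in which, say, first-hitting points lie in $\varphi(Z)$ while some points of $Z$ do not. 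This case analysis, not a soft ergodicity argument, is the missing idea.

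Your first step also has a gap, although it is one the paper fills by citation. The claim that $\G\mapsto Z^\G$ is an order embedding with $Z^\G\subseteq Z$ a.s.\ for \emph{every} nonzero subsystem (not only those containing $u$, and without excluding hypothetical unit-less subsystems) is precisely \ref{prop:openingsubsystem}, i.e.\ \cite[Lemma 5.14]{Lie09a}: every nonzero subsystem $\F$ of a type $\mathrm{II}_0$ system $\E^\M$ satisfies $\mathrm{P}^\F_{s,t}=\chfc{\set{Z:\varphi(Z)\cap[s,t]=\emptyset}}$ for an $\M$-local stationary opening $\varphi$, whence $\mathrm{P}^\F_{s,t}\ge\mathrm{P}^u_{s,t}$ and $\F\supseteq u$. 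Theorem \ref{th:existencemuomega} alone does not deliver this, since it produces a measure for each subsystem separately and neither couples $Z^\G$ to $Z$ nor rules out multiplicity; what really underlies your ``multiplicity free'' assertion is that $j_u(L^\infty(\M_d))$ is maximal abelian in $\B(\E_1)$, which is part of what the cited lemma establishes and which you flag as an obstacle without resolving. Once both ingredients are in place the theorem is immediate, and this is literally the paper's proof: apply \ref{prop:openingsubsystem} and \ref{prop:subrandomset} with $x^*=0$ and $\sg tX\sim\mathrm{BES}(d,x_0)$, so the opening of any nonzero subsystem is a.s.\ empty (giving $\F=\E^d$) or a.s.\ the identity (giving $\F=u$), i.e.\ $\mathscr{S}(\E^d)=\set{0,u,\E^d}$.
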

\begin{remark}
  This solves a question raised in \cite[Example 5.16]{Lie09a}. To our knowledge, these are  the first proven nontrivial examples of  primitive Arveson  systems. 
\end{remark}
For a proof, we still need some more structure. 

\begin{definition}
Suppose $\M$ is a stationary factorising measure type on $\mathcal{C}_{[0,1]}$. Then an $\M$-local stationary opening is a 
  measurable  map $\varphi:\mathcal{C}_{[0,1]}\mapsto\mathcal{C}_{[0,1]}$ with
\def\labelenumi{(\roman{enumi})}
\begin{enumerate}
\item
$\varphi(Z)\subseteq Z$ for all $ Z\in \mathcal{C}_{[0,1]}$,
\item $\varphi(Z+t)=\varphi(Z)+t$ for all $t\ge0$ and   $\M$-a.a. $Z$, and
\item 
\begin{displaymath}
  \varphi(Z\cap[s,t])=\varphi(Z)\cap[s,t]
\end{displaymath}
for all $0< s<t\le1$ for $\M$-a.a. $Z$.
\end{enumerate}
\end{definition}
\begin{remark}
  The name ``opening'' for  operators  with property (i) is common in  mathematical morphology, see e.g. \cite{Hei:94}. 
\end{remark}
The importance of this notion lies in
\def\citt{\cite[Lemma 5.14]{Lie09a}}
\begin{proposition}[\citt]\label{prop:openingsubsystem}
  Let  $\M$ be  a stationary factorising measure type on $\mathcal{C}_{[0,1]}$ and  $\E=\E^\M$ the associated Arveson system.  Suppose $\E$ is type $\mathrm{II}_0$ and  $\F\ne0$ is a subsystem of $\E$. 

Then there exists an $\M$-local stationary opening $\varphi$ such that
\begin{displaymath}
  \mathrm{P}^\F_{s,t}=\chfc{\set{Z:\varphi(Z)\cap[s,t]=\emptyset}}\dmf{0< s<t\le1}.
\end{displaymath}

Conversely, every $\M$-local stationary opening gives rise to a nonzero Arveson subsystem this way. 
\end{proposition}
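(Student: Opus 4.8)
The plan is to transport the whole question into the maximal abelian algebra $L^\infty(\M)$ and to read off $\varphi$ there. Since $\E=\E^\M$, we have $\E_1=L^2(\M)$, and by \ref{th:existencemuomega} applied to $\G=\mathbb{C}u$ together with $\M^u=\M$ from the construction (Proposition \ref{prop:existencepsL1M}), the von Neumann algebra $\set{\mathrm{P}^{\mathbb{C}u}_{s,t}:0\le s<t\le1}''$ is exactly $L^\infty(\M)$ acting by multiplication on $L^2(\M)$, hence is maximal abelian in $\B(\E_1)$. The decisive step will therefore be to show that every projection $\mathrm{P}^\F_{s,t}$ already lies in $L^\infty(\M)$; once this is known, $\mathrm{P}^\F_{s,t}$ is multiplication by an indicator $\chfc{A_{s,t}}$ of a measurable set $A_{s,t}\subseteq\mathcal{C}_{[0,1]}$, and the opening is reconstructed from the family $(A_{s,t})$.

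First I would prove $\mathbb{C}u\subseteq\F$. Put $c_t=\norm{\mathrm{Pr}_{\F_t}u_t}^2$. Because $\F$ is a subsystem it factors, $\F_{s+t}=\F_s\otimes\F_t$, so $\mathrm{Pr}_{\F_{s+t}}u_{s+t}=\mathrm{Pr}_{\F_s}u_s\otimes\mathrm{Pr}_{\F_t}u_t$ and $c_{s+t}=c_sc_t$. The measurable structure of $\F$ (with $\F_0=\E_0=\mathbb{C}$, whence $c_0=1$) forces the nonvanishing solution $c_t=\mathrm{e}^{-\lambda t}$ with $\lambda\in[0,\infty)$. If $0<\lambda<\infty$, then $w_t:=\mathrm{Pr}_{\F_t}u_t/\norm{\mathrm{Pr}_{\F_t}u_t}$ is a measurable section of unit vectors with $w_{s+t}=w_s\otimes w_t$, i.e.\ a normalised unit of $\E$ lying in $\F$. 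As $\E$ is type $\mathrm{II}_0$, its type $\mathrm I$ part is $\mathbb{C}u$ and its only normalised units are the phase multiples $\mathrm{e}^{\mathrm{i}\alpha t}u_t$; hence $w_t\in\mathbb{C}u_t$, so $u_t\in\F_t$, contradicting $c_t<1$. Thus $\lambda=0$, i.e.\ $u_t\in\F_t$ for all $t$.

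With $\mathbb{C}u\subseteq\F$ in hand, the commutation $[\mathrm{P}^\F_{s,t},\mathrm{P}^{\mathbb{C}u}_{s',t'}]=0$ is a factor-by-factor computation: refining the endpoints $\set{s,t,s',t'}$ and using that both $\F$ and $\mathbb{C}u$ factor over the refinement, each operator becomes a tensor product of its fibre projections, and on every elementary factor these are nested ($u_{|I|}\in\F_{|I|}$) or trivial, hence commute. Therefore $\mathrm{P}^\F_{s,t}$ commutes with $\set{\mathrm{P}^{\mathbb{C}u}_{s',t'}}''=L^\infty(\M)$, and by maximal abelianness $\mathrm{P}^\F_{s,t}\in L^\infty(\M)$, so $\mathrm{P}^\F_{s,t}=\chfc{A_{s,t}}$. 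The projection relations of \ref{eq:defPst} translate (modulo $\M$-null sets) into $A_{s,t}\cap A_{t,u}=A_{s,u}$ and, via the identification of $\Theta_r$ with the circular shift $Z\mapsto Z+r$ under $j_u$, into $A_{s+r,t+r}=A_{s,t}+r$; moreover $\mathrm{P}^\F_{s,t}\ge\mathrm{P}^{\mathbb{C}u}_{s,t}$ gives $\set{Z:Z\cap[s,t]=\emptyset}\subseteq A_{s,t}$, and $[s',t']\subseteq[s,t]$ gives $A_{s,t}\subseteq A_{s',t'}$.

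Finally I would set, for $\M$-a.a.\ $Z$,
\begin{displaymath}
  \varphi(Z)=[0,1]\setminus\bigcup\set{(s,t):s,t\in\mathbb{Q},\ Z\in A_{s,t}},
\end{displaymath}
a closed set. The inclusion $\set{Z\cap[s,t]=\emptyset}\subseteq A_{s,t}$ shows $\varphi(Z)$ omits every open interval disjoint from $Z$, whence $\varphi(Z)\subseteq Z$, giving (i); stationarity of $(A_{s,t})$ gives (ii), the localisation relations give (iii), and measurability is clear from the countable construction, while the semigroup and monotonicity relations guarantee $Z\in A_{s,t}\Leftrightarrow\varphi(Z)\cap(s,t)=\emptyset$ a.s., so that $\mathrm{P}^\F_{s,t}=\chfc{\set{Z:\varphi(Z)\cap[s,t]=\emptyset}}$. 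For the converse, any $\M$-local stationary opening $\varphi$ yields $Q_{s,t}=\chfc{\set{\varphi(Z)\cap[s,t]=\emptyset}}\in L^\infty(\M)$; set theory ($[s,u]=[s,t]\cup[t,u]$) gives the semigroup relation and (ii) the stationarity, while (iii) makes $Q_{s,t}$ localised on $[s,t]$, so that $\F_{t-s}:=Q_{0,t-s}\E_{t-s}$ defines a subsystem with $\mathrm{P}^\F_{s,t}=Q_{s,t}$, nonzero because $\varphi(Z)\subseteq Z$ forces $Q_{s,t}\ge\mathrm{P}^{\mathbb{C}u}_{s,t}$ and hence $u\in\F$. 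The main obstacle is this first step — establishing $\mathbb{C}u\subseteq\F$ and thereby that $\mathrm{P}^\F_{s,t}$ lands in the masa $L^\infty(\M)$; once inside this commutative algebra everything reduces to bookkeeping with the sets $A_{s,t}$.
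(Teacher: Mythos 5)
The paper contains no proof of this proposition to compare against: it is imported verbatim from \cite[Lemma 5.14]{Lie09a}. So your attempt can only be judged against what a complete proof requires. Your overall strategy is the natural one and matches the machinery the paper is built on: realise $\set{\mathrm{P}^{\mathbb{C}u}_{s,t}:0\le s<t\le1}''$ as the multiplication algebra $L^\infty(\M)$ on $\E_1=L^2(\M)$, which is maximal abelian; show each $\mathrm{P}^\F_{s,t}$ commutes with it and hence equals an indicator $\chfc{A_{s,t}}$; and rebuild $\varphi$ from the sets $A_{s,t}$. The bookkeeping part is essentially sound: the abutting-interval relation $A_{s,t}\cap A_{t,r}=A_{s,r}$ iterates to give the overlapping-interval relation, which together with compactness yields the needed equivalence $Z\in A_{s,t}\Leftrightarrow\varphi(Z)\cap(s,t)=\emptyset$ a.s., and the endpoint and null-set issues are routine (e.g.\ replace $\varphi(Z)$ by $\varphi(Z)\cap Z$ so that property (i) holds for \emph{all} $Z$, not just a.a.\ $Z$). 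The converse direction is also fine.

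The genuine gap sits exactly at the step you yourself single out as decisive. From $c_{s+t}=c_sc_t$, $c_0=1$ and measurability you cannot conclude $c_t=\mathrm{e}^{-\lambda t}$ with $\lambda<\infty$: the degenerate solution $c_0=1$, $c_t=0$ for all $t>0$ (i.e.\ $u_t\perp\F_t$ for every $t>0$) is measurable and multiplicative, so the sentence ``$c_0=1$ forces the nonvanishing solution'' is a non sequitur. This case cannot be waved away, because in it your commutation argument still goes through ($\mathrm{Pr}_{\mathbb{C}u_r}$ and $\mathrm{Pr}_{\F_r}$ commute when orthogonal just as when nested), so you would still obtain $\mathrm{P}^\F_{s,t}=\chfc{A_{s,t}}$, but now with $A_{s,t}$ disjoint from $\set{Z:Z\cap[s,t]=\emptyset}$, and the construction of $\varphi$ would violate $\varphi(Z)\subseteq Z$; the proposition would actually fail if this case could occur. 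Excluding it needs a separate argument in which the type $\mathrm{II}_0$ hypothesis enters a second time: if $u_t\perp\F_t$ for all $t>0$, then every $f\in\F_1$ is a limit of vectors factorising over arbitrarily fine partitions, all of whose factors vanish on the atom $\emptyset$, hence $f$ vanishes $\M$-a.e.\ on $\bigcup_{n}\bigcup_{k}\set{Z:Z\cap[\frac{k-1}{n},\frac{k}{n}]=\emptyset}$, whose complement is the single configuration $\set{[0,1]}$; and $\M(\set{[0,1]})=0$, since otherwise $t\mapsto\mu_{0,t}(\set{[0,t]})^{-1/2}\chfc{\set{[0,t]}}$ would define a normalised unit orthogonal to $u$, contradicting type $\mathrm{II}_0$. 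Hence $\F_1=0$, contradicting $\F\ne0$. With this inserted your proof closes; without it, it has a hole precisely where you located the main difficulty.
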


The next  proposition is concerned with the probabilistic characterisation of Arveson subsystems of $\E^d$, or more generally Arveson systems arising from measure types of hitting sets of strong Markov processes. 
For a  stochastic process $\sg tX$ on a probability space $(\Omega,\Sigma,\mathbb{P})$ introduce the canonical (augmented) filtration $\Sigma^X$,
\begin{displaymath}
  \Sigma_t^X=\bigcap_{\varepsilon>0}\sigma(\set{X_s:s\le t+\varepsilon}\cup\set{B\in\Sigma:\mathbb{P}(B)=0})
\end{displaymath} 
 \begin{proposition}\label{prop:subrandomset}
   Let  $\sg tX$ be a strong Markov process with a.s.\ continuous paths in $\mathbb{R}^m$  such that for some $x^*\in \mathbb{R}^m$ the distribution of     
 \begin{displaymath}
  Z^X=\set {t\in[0,1]: X_t=x^*}\in \mathcal{C}_{[0,1]}
\end{displaymath}
is quasistationary and quasifactorising with measure type $\M$. 

If the filtration $\Sigma^X$ is right continuous then any $\M$-local stationary opening 
fulfils either $\varphi(Z)=\emptyset$ $\mathbb{P}$-a.s. or $\varphi(Z)=Z$ $\mathbb{P}$-a.s.
\end{proposition}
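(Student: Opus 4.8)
The plan is to exploit the strong Markov property at the first hitting time of $x^*$ together with the defining properties of a local stationary opening. The key idea is that a local stationary opening $\varphi$ produces, for $\M$-a.a.\ $Z$, a subset $\varphi(Z)\subseteq Z$ that commutes with both restriction and shift. I want to encode the event $\set{\varphi(Z)\ne\emptyset}$ as a measurable event and show it is, after conditioning on the process hitting $x^*$, a $0$–$1$ event. Concretely, for a closed set $Z$ with $Z\ne\emptyset$, let $\sigma(Z)=\min Z$ be the first hitting time of $x^*$ (so $\sigma = \inf\set{t:X_t=x^*}$, finite on the event $Z^X\ne\emptyset$). The properties (i)--(iii) of $\varphi$ strongly constrain how $\varphi(Z)$ behaves under cutting at $\sigma$.

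First I would argue a locality/$0$–$1$ dichotomy at a single time point. Fix a small interval and use property (iii) to write $\varphi(Z\cap[s,t]) = \varphi(Z)\cap[s,t]$; combined with shift-covariance (ii) this means whether $\varphi$ ``keeps'' the portion of $Z$ in a window depends only on $Z$ restricted to that window (up to the shift), so $\varphi$ is built from a translation-invariant local rule. The heart of the matter is to show this rule is trivial, i.e.\ it either discards everything or keeps everything. For this I would introduce the \emph{right-continuity of the filtration} $\Sigma^X$: the event that $\varphi(Z)$ contains a point in an arbitrarily small right-neighbourhood of a given hitting time is, by right-continuity, measurable with respect to the germ $\sigma$-field $\Sigma^X_{\sigma+}=\bigcap_{\varepsilon>0}\Sigma^X_{\sigma+\varepsilon}$ at the stopping time $\sigma$. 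By the strong Markov property at $\sigma$, the post-$\sigma$ process $(X_{\sigma+r})_{r\ge0}$ is again a copy of the process started at $x^*$, independent of $\Sigma^X_\sigma$; and by Blumenthal's $0$–$1$ law applied to this fresh start, any event in the germ field $\Sigma^X_{\sigma+}$ has conditional probability $0$ or $1$.

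Thus I would define the event $A=\set{\sigma\in\varphi(Z)}$ — whether the first hitting point survives the opening. By property (i) $\varphi(Z)\subseteq Z$, and by local stationarity combined with property (iii) the decision ``is $\min Z$ kept?'' depends only on the germ of $Z$ (hence of $X$) just after $\sigma$, so $A\in\Sigma^X_{\sigma+}$. Blumenthal's $0$–$1$ law then forces $\mathbb{P}_{x^*}(A)\in\set{0,1}$, and by the strong Markov property this transfers to the original law. If $\mathbb{P}(A)=0$, then almost surely the first hitting point is discarded; iterating the same argument at the first surviving hitting point (or, more cleanly, applying translation-invariance and the fact that every point of $Z$ is a first-hitting time for a suitably shifted process, using property (ii)) shows no point survives, so $\varphi(Z)=\emptyset$ a.s. If $\mathbb{P}(A)=1$, the same translation-invariant propagation shows every hitting point is kept, whence $\varphi(Z)=Z$ a.s.

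The main obstacle I anticipate is making precise the claim that the survival of a given point of $Z$ under $\varphi$ is measurable with respect to the post-time germ $\sigma$-field, so that Blumenthal's law genuinely applies. Properties (ii) and (iii) of the local stationary opening only hold for $\M$-a.a.\ $Z$ and for the \emph{given} stationary/factorising structure, so I must be careful that the null sets do not accumulate when I localise at the random time $\sigma$ and when I propagate the dichotomy from one hitting point to all of $Z$. The cleanest route is probably to show that the germ-field measurability of $A$ follows from writing $\varphi(Z)\cap[\sigma,\sigma+\varepsilon]=\varphi(Z\cap[\sigma,\sigma+\varepsilon])$ via (iii) and then letting $\varepsilon\downarrow0$, using right-continuity of $\Sigma^X$ to capture the limit inside $\Sigma^X_{\sigma+}$; handling the measurability of $\varphi$ jointly with the random shift by $\sigma$ is the delicate point, and strong Markovness at the stopping time $\sigma$ is exactly what licenses replacing the shifted process by a fresh copy started at $x^*$.
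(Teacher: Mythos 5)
Your opening move is exactly right and coincides with the paper's: the event $A=\set{\sigma\in\varphi(Z^X)}$ is, via property (iii) and closedness of $\varphi(Z^X)$, an event of the germ $\sigma$-field of the post-$\sigma$ process, so the strong Markov property plus Blumenthal's $0$--$1$ law give $\mathbb{P}(A)\in\set{0,1}$. The gap is in the propagation from this single $0$--$1$ event to the global dichotomy. Your key claim that ``every point of $Z$ is a first-hitting time for a suitably shifted process'' is false for the sets in question: for hitting sets such as Bessel zeros, $Z$ is a.s.\ a perfect set, and a point $t\in Z$ arises as $\inf\left(Z\cap(q,\infty)\right)$ for some earlier (rational) $q$ only when $t$ is isolated from the left in $Z$, i.e.\ $t$ is a right endpoint of one of the countably many complementary gaps. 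These points are dense in $Z$ but form a countable, hence very proper, subset. In the case $\mathbb{P}(A)=1$ this is repairable: all first-hitting points $\tau_q$, $q\in\mathbb{Q}$, lie in $\varphi(Z)$ a.s., they are dense in $Z$, and $\varphi(Z)$ is closed, whence $\varphi(Z)\supseteq\overline{\set{\tau_q:q\in\mathbb{Q}}}=Z$; this density-plus-closedness step (which you never invoke) is exactly how the paper argues.

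In the case $\mathbb{P}(A)=0$, however, your argument collapses and cannot be patched the same way: knowing that no $\tau_q$ belongs to $\varphi(Z)$ does \emph{not} force $\varphi(Z)=\emptyset$, because a closed subset of $Z$ can avoid the countable dense set of gap endpoints and still be nonempty (it could consist of two-sided accumulation points of $Z$). Your proposed fix, ``iterating the same argument at the first surviving hitting point,'' is not licensed: $\min\varphi(Z)$ is not a stopping time of $X$ (it depends on the whole path through $\varphi$), so neither the strong Markov property nor Blumenthal's law can be applied at that random time. This is precisely why the paper does something you omit entirely: besides the forward variables $Y_q^+$ (your $A$ localised after rational times), it introduces time-reversed variables $Y_q^-$ built on the last hitting time before $q$, justified by the representation $Z^X=\overline{\set{\tau+M_s:s\ge0}}$ through a subordinator and the time symmetry of subordinator ranges, and then runs a four-case analysis on the a.s.\ constant value of $(Y_q^-,Y_q^+)$, eliminating the mixed cases $(0,1)$ and $(1,0)$ by the rational-approximation/closedness argument. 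Without some substitute for this backward half (or another mechanism ruling out openings whose output consists only of two-sided accumulation points), the implication $\mathbb{P}(A)=0\Rightarrow\varphi(Z)=\emptyset$ remains unproven in your proposal.
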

\begin{proof}
 For realisations with $Z^X=\emptyset$  there is nothing to prove.
  We introduce the random variable $\tau=\inf\set{t>0:X_t=x^*}$ such that $X_\tau=x^*$. Then the random variable
  \begin{displaymath}
    Y=\left\{
      \begin{array}[c]{cl}
        1&\tau\in \varphi(Z^X)\\
0&\tau\notin \varphi(Z^X)
      \end{array}
\right.
  \end{displaymath}
is well-defined. 
Now by the strong Markov property, the process $\sg t{\tilde X}$, $\tilde X_t=X_{\tau+t}$, is distributed according to $\mathbb{P}_{x^*}$. By definition and  locality of $\varphi$, $Y$ is $\bigcap_{\varepsilon>0}\Sigma^{\tilde X}_\varepsilon$-measurable.  Thus, by the Blumenthal 0-1 law, $\mathbb{P}(Y=1)\in\set{0,1}$. 
Moreover, from \cite[Theorem 22.13]{Kal01} we know that
\begin{displaymath}
  Z^X=\overline{\set{M_s+\tau:s\ge0}}\cap[0,1]
\end{displaymath}
where $\sg sM$ is the  subordinator associated with $X$ and $x^*$ which is  independent of $\tau$. It follows from the time symmetry of subordinators that we can apply the same arguments to the set $T-Z^X_{0,T}$. This means for $\tau_T=\sup\set{t<T:X_t=x^*}$ that $\mathbb{P}({\tau_T}\in \varphi(Z^X))\in\set{0,1}$, too.    
Introduce for all  $q\in \mathbb{Q}\cap\NRp$ random variables $Y_q^\pm\in\set{0,1}$:
\begin{displaymath}
Y_q^+=\left\{
\begin{array}[c]{cl}
1& \mbox{ if \quad$\inf (Z^X\cap(q,\infty))=\inf (\varphi(Z^X)\cap(q,\infty))$}\\ 0&\mbox{ otherwise}
\end{array}\right.
\end{displaymath}
and
\begin{displaymath}
Y_q^-=\left\{
\begin{array}[c]{cl}
1& \mbox{ if \quad$\sup (Z^X\cap(0,q))=\sup (\varphi(Z^X)\cap(0,q))$}\\ 0&\mbox{ otherwise}
\end{array}\right.
.
\end{displaymath}
 It is easy to see from  quasistationarity and quasifactorisation that there exists a  fixed $y\in \set{(0,0),(0,1), (1,0),(1,1)}$  such that it holds $\mathbb{P}$-a.s. $(Y_q^-,Y_q^+)=y $ for all positive $q\in \mathbb{Q}\cap\NRp$. 

It is clear that $y=(0,0)$ implies $\varphi(Z)=\emptyset$. Similarly,   $y=(1,1)$ implies $\varphi(Z)=Z$. 

Let us exclude $y=(0,1)$. Choose some $t\in Z^X\setminus\varphi(Z^X)$ and $q_n\nearrow_{n\to\infty}t$, $q_n\in \mathbb{Q}\cap (0,t)$. Then $Y^+_{q_n}=1$ indicates that there are $t_n\in \varphi(Z^X)$, $q_n<t_n<t$. This implies $\lim_{n\to\infty}t_n=t$. Since $\varphi(Z^X)$ is closed, $t\in \varphi(Z^X)$ contradicting  $t\in Z^X\setminus\varphi(Z^X)$. 

 The case $y=(1,0)$ is excluded by the same arguments.  This completes the proof.  
\end{proof}

\begin{remark}
  There is the more general bar code construction of \cite{Tsi03} giving a vast resource for examples of quasistationary quasifactorising random sets from hitting times sets of diffusions. Unfortunately,  \ref{prop:subrandomset} does not apply in general, for the hitting set is not point like in most situations.  
\end{remark}
\begin{proof}[ of \ref{cor:besseltrivial}]
   The claim follows from application of the previous result  and \ref{prop:openingsubsystem} to   $x^*=0$ and   $\sg tX\sim\mathrm{BES}(d,x_0)$.
\end{proof}

We can prove even more than primitivity for $\E^d$, its  gauge group is twodimensional. Remember the definition of $L_t(Z)$ from \ref{ex:bessel0} 
\begin{theorem}\label{th:autEdtrivial}
  Any $\theta\in \mathrm{Aut}(\E^d)$ has the form
  \begin{displaymath}
    \theta_tf(Z)=\mathrm{e}^{\mathrm{i}(\gamma_0 t+\gamma_1 L_t(Z))}f(Z)
  \end{displaymath}
for some real $\gamma_0,\gamma_1$. 
\end{theorem}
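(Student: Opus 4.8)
The plan is to show first that every $\theta_t$ acts by multiplication, and then to classify the resulting phase functional.

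First I would exploit that the type \textrm{I} part $\E^\U=\mathbb{C}u$ (recall $\E^d$ is type $\mathrm{II}_0$, so the Fock index is $0$ and the only units are the $\mathrm{e}^{\mathrm{i}\alpha t}u_t$) is intrinsic and hence preserved by every automorphism. Consequently $\theta_t u_t=\lambda_t u_t$ with $|\lambda_t|=1$, and the unit relation forces $\lambda_{s+t}=\lambda_s\lambda_t$, i.e.\ $\lambda_t=\mathrm{e}^{\mathrm{i}\gamma_0 t}$ for some real $\gamma_0$. Preservation of $\mathbb{C}u$ also gives $\theta_1\mathrm{P}^{\mathbb{C}u}_{s,t}=\mathrm{P}^{\mathbb{C}u}_{s,t}\theta_1$, since under $\E_1=\E_s\otimes\E_{t-s}\otimes\E_{1-t}$ one has $\theta_1=\theta_s\otimes\theta_{t-s}\otimes\theta_{1-t}$ and $\theta_{t-s}$ fixes $\mathrm{Pr}_{\mathbb{C}u_{t-s}}$. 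By \ref{th:existencemuomega} the projections $\mathrm{P}^{\mathbb{C}u}_{s,t}$ generate through $j_{\mathbb{C}u}$ the algebra $L^\infty(\M^u)=L^\infty(\M_d)$ of multiplication operators on $\E_1=L^2(\M_{0,1})$; since the events $\set{Z\cap[s,t]=\emptyset}$ generate the Borel $\sigma$-field of $\C_{[0,1]}$, this algebra is maximal abelian. Hence $\theta_1$ lies in it and is multiplication by a unimodular $h_1$. Running the same argument over each subinterval $[0,t]$ (the Bessel zero set is self-similar, so the restricted system is again of this kind) yields $\theta_tf(Z)=h_t(Z)f(Z)$ with $|h_t|=1$.

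Next I would turn the cocycle identity into a statement about an additive functional. Inserting the explicit unitaries $V_{s,t}$ of \ref{prop:existencepsL1M} into $\theta_{s+t}V_{s,t}=V_{s,t}(\theta_s\otimes\theta_t)$ gives, for $\M$-a.a.\ $Z$,
\begin{displaymath}
  h_{s+t}(Z)=h_s(Z\cap[0,s])\,h_t\bigl((Z\cap[s,s+t])-s\bigr).
\end{displaymath}
Writing $h_t=\mathrm{e}^{\mathrm{i}A_t}$ with a locally lifted real $A_t$, this says precisely that $A_t(Z)$ is a stationary additive functional of the random set: its increment over $[t_1,t_2]$ depends only on $Z\cap[t_1,t_2]$, shifted to the origin, and the first step fixes $A_t(\emptyset)=\gamma_0 t$.

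Finally I would classify such functionals, which is the crux. Writing $\d A$ for the associated (signed) measure on $[0,1]$, I would split it along $Z$ and its complement. On any gap of $Z^c$ the increment of $A$ equals its value on the empty configuration, namely $\gamma_0$ times the length; as $Z$ is Lebesgue-null this accounts for a term $\gamma_0\,\d t$. The remaining part of $\d A$ is a stationary additive functional carried by $Z$. Restriction-consistency excludes any contribution indexed by the gaps or their endpoints, since a gap straddling the cut point would be split and contradict the factorisation; so this part is continuous, and as $Z$ is a regenerative set without isolated points ($\widehat Z=Z$) the canonical continuous additive functional supported on it is unique up to a scalar, namely the local time $L_t(Z)$ from \ref{ex:bessel0}. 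Thus the $Z$-part equals $\gamma_1 L_t(Z)$ and $A_t(Z)=\gamma_0 t+\gamma_1 L_t(Z)$, as claimed. The hard part is exactly this last identification, i.e.\ ruling out exotic singular additive functionals on $Z$; I expect the cleanest route to be a Blumenthal $0$--$1$ argument in the spirit of \ref{prop:subrandomset}, applied to the Radon--Nikodym density of the $Z$-part with respect to $\d L$, forcing it to be a.s.\ the constant $\gamma_1$.
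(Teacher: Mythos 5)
Your first half is, in substance, the paper's own argument: invariance of $\E^\U=\mathbb{C}u$ gives $\theta_t u_t=\mathrm{e}^{\mathrm{i}\gamma_0 t}u_t$, commutation with the projections $\mathrm{P}^u_{s,t}$ places $\theta_1$ in the maximal abelian algebra they generate, so $\theta_1$ is multiplication by a unimodular measurable function, and the product structure turns this multiplier into a stationary, factorising $\mathbb{T}$-valued functional $\lambda$ with $\lambda(\emptyset)=1$. Up to that point you match the paper (which works only with $\theta_1$ and notes at the end that $\theta_1$ determines $\theta$).

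The genuine gap is the passage from this multiplicative functional to an additive one. You write $h_t=\mathrm{e}^{\mathrm{i}A_t}$ ``with a locally lifted real $A_t$'' and then treat $\d A$ as a signed measure on $[0,1]$. But a priori $t\mapsto h_t(Z)=\lambda(Z\cap[0,t])$ is merely measurable; it could jump, and jump densely, in which case no continuous lift along the covering $\mathbb{R}\to\mathbb{T}$ exists and there is no measure $\d A$ to decompose. Your restriction-consistency argument rules out contributions indexed by \emph{gaps}, but the dangerous case is jumps carried by points of $Z$ itself (parametrised by local time), and nothing in your proposal excludes these. This exclusion is precisely the heart of the paper's proof: one reparametrises the phase along the subordinator, $\eta_s(Z)=\lambda(Z\cap[0,\tau+M_s])$, so that by the strong Markov property $\eta$ has stationary independent multiplicative increments; the times of jumps of size $>\varepsilon$ are then locally finite, and their image $\phi(Z)=\set{\tau+M_s:s\in S^\varepsilon(Z)}$ is an $\M_d$-local stationary opening with $\phi(Z)\subsetneq Z$ (since $Z$ is perfect). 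By \ref{prop:openingsubsystem} together with \ref{prop:subrandomset} (i.e.\ primitivity, \ref{cor:besseltrivial}), $\phi(Z)=\emptyset$ a.s., so $\eta$ is continuous; only then does $\zeta_t=\frac1{\mathrm{i}}\log\lambda(Z\cap[0,t])$ exist as a continuous additive functional vanishing off $Z$, and \cite[Theorem 19.24]{Kal01} identifies it as $\gamma_1L_t$. Your proposed repair --- a Blumenthal $0$--$1$ argument applied to the Radon--Nikodym density of the $Z$-part of $\d A$ with respect to $\d L$ --- presupposes both that $\d A$ exists and that its $Z$-part is absolutely continuous with respect to local time; the latter is exactly what Kallenberg's uniqueness theorem for continuous additive functionals delivers and may not be assumed. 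So the missing idea is the use of primitivity, via local stationary openings, to kill the jumps of the phase; without it the classification step does not get off the ground.
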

\begin{remark}
  A similar theorem holds for endomorphisms.
\end{remark}
\begin{proof}
  We know that $\theta$ should leave $\E^\U$ invariant. Thus there is $\gamma_0\in \mathbb{R}$ such that
  \begin{displaymath}
    \theta_t u_t=\mathrm{e}^{\mathrm{i}\gamma_0 t}u_t
  \end{displaymath}
for the standard unit of $\E^d$.  
Without loss of generality, let $\gamma_0=0$. 
Then $\theta_1$ shall commute with all the projections $\mathrm{P}^u_{s,t}$ defined by the unit through \ref{eq:defPst}. But those projections generate a maximal abelian von Neumann subalgebra of $\mathcal{B}(\E_1)$. Thus $\theta_1$ is in this subalgebra and we find a measurable function $\map \lambda{\C_{[0,1]}}{\mathbb{T}}$ such that 
\begin{math}
    \theta_1f(Z)=\lambda(Z)f(Z).
  \end{math}
Now we obtain from $\gamma_0=0$ that $\lambda(\emptyset)=1$. Furthermore,  for all $t$ and $\M_d-$a.a.~$Z\in \C_{[0,1]}$ it must hold
\begin{eqnarray*}
  \lambda(Z+t)&=&\lambda(Z)\\
\lambda (Z)&=&\lambda(Z_{0,t})\lambda(Z_{t,1}). 
\end{eqnarray*}
From these relations, we could extend $\lambda$ to $\bigcup_{n\ge1}\C_{[0,n]}$, e.g. 
\begin{displaymath}
\lambda(Z)=\lambda(Z_{0,1}) \lambda(Z_{1,2}-1). 
\end{displaymath}

Suppose now $Z\in \C_{\NRp}$ is the full zero set of a Bessel process with first hitting time $\tau $. 

Remember the definition of the subordinator $\sg sM$ from \ref{ex:bessel0}. Then it is easy to see from the strong Markov property and measurability of $\lambda$ that the $S^1$-valued process $\sg s\eta$,
\begin{displaymath}
  \eta_s(Z)=\lambda(Z\cap[0,\tau+M_s])
\end{displaymath}
has stationary independent multiplicative increments and measurable paths. Fix $\varepsilon>0$. Then the latter property shows that the set $S^\varepsilon(Z)$ of times $s$, where $\eta$ makes larger jumps  than $\varepsilon$, is locally finite almost surely. Consequently,
\begin{displaymath}
  \phi(Z)=\set{\tau+M_s:s\in S^\varepsilon (Z)}
\end{displaymath}
is an $\M_d$-local  stationary opening with $\phi(Z)\subsetneq Z$. By \ref{prop:openingsubsystem}, $\phi(Z)=\emptyset$ a.s. Since $\varepsilon$ was arbitrary,   $\eta$ must have  continuous paths a.s. 

As a consequence, there is almost surely a continuous version of
\begin{displaymath}
  t\mapsto \frac1{\mathrm{i}}\log \lambda(Z\cap[0,t])=\zeta_t(Z).
\end{displaymath}
Clearly, $\sg t\zeta$ is an additive functional of the Bessel process. Since $\lambda(\emptyset)=1$ $\zeta$ changes only on the zero set $Z$. By  \cite[Theorem 19.24]{Kal01}, $\zeta$ has to be a multiple of the local time. Thus there is some  $\gamma_1\in \mathbb{R}$ such that
\begin{displaymath}
  \lambda(Z)=\mathrm{e}^{\mathrm{i}\gamma_1L_1(Z)}
\end{displaymath}
for $\M_d-$a.a.\ $Z\in\C_{[0,1]}$.  As $\theta_1$ determines $\theta$, this completes the proof.   
\end{proof}

\section{Products of Arveson systems of  Bessel zeros }

Now we want to analise the spatial and tensor  products of the Arveson systems $\E^d,\E^{d'}$. 

First we want to check the condition from \ref{cor:alsoZhatdoesit}.
Remember that  $\dim_{H}(Z)$ is the Hausdorff dimension of any set $Z$. 
 \begin{theorem}
\label{thm:Kahaneps}
Assume that    $0<d _1,d _2<2$.

If $d _1+d _2\ge2$ then almost surely $Z_1\cap Z_2=\emptyset$ and  $ \E^{d_1}\Utimes\E^{d_2}=\E^{d_1}\otimes\E^{d_2}$.

If $d _1+d _2<2$ then with positive probability $Z_1\otimes Z_2\ne\emptyset$. Furthermore, almost surely  
for all $s<t$ with $Z_1\cap Z_2\cap (s,t)\ne \emptyset$
   \begin{equation}
\label{eq:Kahanepure}
       \dim_{H}(Z_1\cap Z_2\cap (s,t))= 1-\frac{d _1+d _2}2
   \end{equation}
Consequently, 
\begin{displaymath}
  \E^{d_1}\Utimes\E^{d_2}\subsetneqq\E^{d_1}\otimes\E^{d_2}
\end{displaymath}
then.
 \end{theorem}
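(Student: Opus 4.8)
The plan is to prove the theorem in two regimes according to the sign of $d_1+d_2-2$, using the independence of $Z_1$ and $Z_2$ together with the Hausdorff-dimension information for Bessel zero sets established in \ref{ex:bessel0} and the characterisation from \ref{cor:alsoZhatdoesit}. Since $Z_1$ and $Z_2$ are independent under $\M^u\otimes\M^v=\M_{d_1}\otimes\M_{d_2}$ and each is (locally) of Hausdorff dimension $1-\frac{d_i}2$, the governing heuristic is that two independent random sets of dimensions $\alpha_1,\alpha_2$ in a one-dimensional ambient space meet iff $\alpha_1+\alpha_2>1$, i.e.\ iff $(1-\frac{d_1}2)+(1-\frac{d_2}2)>1$, which is exactly $d_1+d_2<2$. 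This is a classical intersection theorem going back to Kahane (hence the name of the theorem), and the proof is really about making that heuristic rigorous in the present setting.

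First I would treat the case $d_1+d_2\ge2$. Here I want to show $Z_1\cap Z_2=\emptyset$ almost surely. The cleanest route is a first-moment / Frostman-type argument: condition on $Z_2$, and use that for a \emph{fixed} Borel set $B$ of Hausdorff dimension $\beta$, an independent Bessel zero set $Z_1$ of dimension $1-\frac{d_1}2$ almost surely misses $B$ whenever $(1-\frac{d_1}2)+\beta<1$, which is the standard statement that a random set of dimension $\alpha$ a.s.\ avoids a fixed set of dimension $<1-\alpha$. Since $Z_2$ has dimension $1-\frac{d_2}2$ near each of its points and $(1-\frac{d_1}2)+(1-\frac{d_2}2)=2-\frac{d_1+d_2}2\le1$, Fubini over the product measure and the independence of the two sets give $Z_1\cap Z_2=\emptyset$ $\M_{d_1}\otimes\M_{d_2}$-a.s. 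The boundary case $d_1+d_2=2$ is the delicate one, where the dimensions sum exactly to $1$; there the correct tool is the capacity/energy criterion rather than a crude dimension count, and one uses that the stable subordinators of index $d_i$ with $d_1+d_2=2$ have complementary indices so that the intersection has dimension $0$ and in fact is empty a.s. Once $Z_1\cap Z_2=\emptyset$ a.s.\ is established, \ref{cor:alsoZhatdoesit} immediately gives $\E^{d_1}\Utimes\E^{d_2}=\E^{d_1}\otimes\E^{d_2}$.

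For the case $d_1+d_2<2$ I would argue the complementary direction. To see $Z_1\cap Z_2\ne\emptyset$ with positive probability, I would use a second-moment (energy) method: exhibit a random measure supported on $Z_1\cap Z_2$ with finite $\gamma$-energy for $\gamma<1-\frac{d_1+d_2}2$, the natural candidate being the product of the local-time measures $L(Z_1)$ and $L(Z_2)$ suitably interpreted, and conclude by the Frostman/energy criterion that $Z_1\cap Z_2$ carries positive measure on a set of positive probability. To upgrade this to the sharp dimension formula \ref{eq:Kahanepure}, I would combine the upper bound from a covering/first-moment estimate (the expected number of dyadic intervals of length $2^{-n}$ meeting both sets decays like $2^{-n(1-\frac{d_1+d_2}2)}$) with the matching lower bound from the energy estimate applied locally on every subinterval $(s,t)$ with $Z_1\cap Z_2\cap(s,t)\ne\emptyset$; stationarity and factorisation of the two measure types let me promote the ``positive probability'' statement to a statement holding simultaneously for all such $(s,t)$ almost surely. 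Finally, since \ref{eq:Kahanepure} gives $\dim_H(Z_1\cap Z_2)>0$ on an event of positive probability, in particular $Z_1\cap Z_2\ne\emptyset$ there, condition \ref{eq:emptyintersection} of \ref{cor:alsoZhatdoesit} fails, whence $\E^{d_1}\Utimes\E^{d_2}\subsetneqq\E^{d_1}\otimes\E^{d_2}$.

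The main obstacle I expect is the sharp intersection-dimension statement \ref{eq:Kahanepure}, and especially its uniform (almost-sure, for all $(s,t)$ simultaneously) formulation. The positive-probability nonemptiness is a fairly standard energy estimate, but getting the exact value $1-\frac{d_1+d_2}2$ with matching upper and lower bounds, and then using the stationary-factorising structure to make the statement hold for every relevant subinterval at once rather than merely on average, is where the real work lies. I would lean on the stable-subordinator description of each $Z_i$ (index $d_i$) and on Kahane's intersection theory for independent regenerative/stable sets; the scaling $\dim_H(Z_1\cap Z_2)=\dim_H Z_1+\dim_H Z_2-1$ is exactly the codimension-additivity of independent sets, and the only subtlety is that one must verify the hypotheses (independence, correct local dimensions, the fact that $Z_i\ne[0,1]$ a.s.) hold in this precise measure-type setting, all of which are supplied by \ref{ex:bessel0}.
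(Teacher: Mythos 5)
Your strategy is viable, but it is a genuinely different route from the paper's, and a far heavier one. The paper never invokes Kahane/Hawkes intersection theory: it uses the theorem of Shiga and Watanabe that for independent $X\sim\mathrm{BES}(d_1,x)$ and $X'\sim\mathrm{BES}(d_2,x')$ the process $Y_t=\sqrt{X_t^2+(X_t')^2}$ is again Bessel, $Y\sim\mathrm{BES}(d_1+d_2,\sqrt{x^2+(x')^2})$. Hence $Z_1\cap Z_2=\set{t:Y_t=0}$ is \emph{itself} the zero set of a Bessel process of parameter $d_1+d_2$, and every claim of \ref{thm:Kahaneps} is then read off from facts already recorded in \ref{ex:bessel0}: for $d_1+d_2\ge2$ that zero set is a.s.\ empty; for $d_1+d_2<2$ it is nonempty with positive probability and a.s.\ has local Hausdorff dimension $1-\frac{d_1+d_2}2$ near every point, which is exactly \ref{eq:Kahanepure} already in its uniform-in-$(s,t)$ form. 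Both proofs then finish identically via \ref{cor:alsoZhatdoesit}. What the paper's trick buys is that the three hardest items on your list --- the sharp dimension formula, its simultaneous validity for all subintervals, and above all the critical case --- come for free. What your route buys is generality: it does not use the additive structure of squared Bessel processes and would in principle apply to other independent stationary factorising random sets, which is precisely the point of the remark the paper places after this theorem.

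The one genuine soft spot in your plan is the boundary case $d_1+d_2=2$. You rightly observe that a dimension count is inconclusive there, but you then assert that ``the intersection has dimension $0$ and in fact is empty a.s.''\ without an argument; note that the dimension statement is not what is needed (a nonempty set can have dimension $0$), emptiness is. To close this along your lines you would need (i) the hitting criterion that $Z_1$ meets a fixed compact $B\subseteq(0,1)$ with positive probability if and only if $B$ has positive Riesz capacity of order $d_1/2$, and (ii) the exact Hausdorff measure results for stable subordinator ranges (cf.\ the Fristedt--Pruitt result cited in \ref{ex:bessel0}), which show that $Z_2$, being the range of a stable subordinator of index $1-\frac{d_2}2=\frac{d_1}2$, has locally finite $H^{d_1/2}$-measure and therefore zero capacity of that order, hence is a.s.\ polar for $Z_1$. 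That is a real piece of work which the paper's identification of $Z_1\cap Z_2$ with $\mathrm{BES}(2)$ zeros disposes of instantly. A minor slip in the subcritical case: the expected number of dyadic intervals of length $2^{-n}$ meeting both sets \emph{grows} like $2^{n(1-\frac{d_1+d_2}2)}$ (it does not decay); it is this growth rate that yields the covering upper bound on the dimension.
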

 \begin{proof}
   By a result of \textsc{Shiga and Watanabe} \cite{SW73}, we know that for Bessel processes  $\sg tX\sim \mathrm{BES}(d ,x)$, $\sg t{X'}\sim \mathrm{BES}(d ',x')$ the process $Y$,
   \begin{displaymath}
     Y_t=\sqrt{X_t^2+(X'_t)^2}
   \end{displaymath}
is distributed as $\sg tY\sim \mathrm{BES}(d +d ',\sqrt{x^2+(x'^2)})$. Now 
\begin{displaymath}
  \set{t:X_t=0,X'_t=0}=\set{t:Y_t=0}
\end{displaymath}
Since we know that for any $0\le s<t$ $\mathcal{M}_{d+d'} $-a.s. either $Z\cap[s,t]=\emptyset$ or $\dim_{H}(Z\cap[s,t])=1-\frac{d+d'}2$
this proves the required statements. 

\ref{prop:alsoZhatdoesit} completes the proof.
 \end{proof}

 \begin{remark}
   Please note that we used the special structure of $\M_d $ here. Nevertheless, most of the implications hold true in much more generality, using techniques from   \cite{Kah86} to compute Hausdorff dimensions of stationary random sets.
 \end{remark}

\begin{theorem}
Suppose $d _1\ne d _2$, $0<d _1,d _2<2$.

Then
\begin{displaymath}
  \E^{d _1}\otimes\E^{d _2}\cong\E^{\M_{d _1}\ast\M_{d _2}}. 
\end{displaymath}

Moreover,   $\E^{d _1}\otimes \E^{d _2}$ has at most 5 proper  subsystems: $0$, $u\otimes v$, $u\otimes \E^{d _2}$, $\E^{d _1}\otimes v$, and  $\E^{d _1}\Utimes \E^{d _2}$. The last one appears  if and only if $d _1+d _2 <2$. Then it is not isomorphic to   $\E^{d _1}\otimes\E^{d _2}$.
\end{theorem}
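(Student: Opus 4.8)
The plan is to prove the two assertions in turn: first to realise the tensor product as the Arveson system of the single measure type $\M_{d_1}\ast\M_{d_2}$, and then to read off its subsystems from the local stationary openings of \ref{prop:openingsubsystem}.

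For the isomorphism $\E^{d_1}\otimes\E^{d_2}\cong\E^{\M_{d_1}\ast\M_{d_2}}$ I would argue fibrewise. Identifying $L^2((\M_{d_1})_{0,t})\otimes L^2((\M_{d_2})_{0,t})$ with $L^2((\M_{d_1})_{0,t}\times(\M_{d_2})_{0,t})$, the union map $\Phi\colon(Z_1,Z_2)\mapsto Z_1\cup Z_2$ pushes $\M_{d_1}\times\M_{d_2}$ forward to $\M_{d_1}\ast\M_{d_2}$. The crucial lemma is that $\Phi$ is almost surely injective as soon as $d_1\ne d_2$. Writing $\delta(t)=\lim_{\varepsilon\downarrow0}\dim_{H}(Z\cap(t-\varepsilon,t+\varepsilon))$ for the local dimension of $Z=Z_1\cup Z_2$ and taking $d_1<d_2$, the property from \ref{ex:bessel0} that $Z_i$ has dimension $1-d_i/2$ near each of its points gives $\delta(t)=1-d_1/2$ for every $t\in Z_1$, whereas a point $t\in Z_2\setminus Z_1$ has a neighbourhood avoiding the closed set $Z_1$, so $\delta(t)=1-d_2/2$. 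Hence $Z_1=\set{t\in Z:\delta(t)=1-d_1/2}$, and since $Z_1\cap Z_2$ has the strictly smaller dimension $1-\frac{d_1+d_2}2$ by \ref{thm:Kahaneps} it cannot be relatively open in $Z_2$, so $Z_2=\overline{\set{t\in Z:\delta(t)=1-d_2/2}}$. Thus $\Phi$ induces fibrewise unitaries; as $\cup$ commutes with restriction to subintervals these intertwine the factorisation maps $V_{s,t}$ of \ref{prop:existencepsL1M}, giving the isomorphism. In particular the measure type of the unit $u\otimes v$ is $\M_{d_1}\ast\M_{d_2}$.

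For the subsystem count I would first record that the index is additive, so $(\E^{d_1}\otimes\E^{d_2})^\U$ is type $\mathrm I$ of index $0$, i.e.\ the one-dimensional system $\mathbb C(u\otimes v)$; as the tensor product is neither trivial nor type $\mathrm I$ it is type $\mathrm{II}_0$, and \ref{prop:openingsubsystem} applies with $\M=\M_{d_1}\ast\M_{d_2}$ and $Z=Z_1\cup Z_2$. Via $\mathrm P^\F_{s,t}=\chfc{\set{Z:\varphi(Z)\cap[s,t]=\emptyset}}$ the five openings $\emptyset$, $Z_1\cap Z_2$, $Z_1$, $Z_2$, $Z_1\cup Z_2$ correspond respectively to the whole system, to $\E^{d_1}\Utimes\E^{d_2}$ (using the spatial-product formula \ref{eq:projectionspatialproduct}), to $u\otimes\E^{d_2}$, to $\E^{d_1}\otimes v$, and to $u\otimes v$; together with the trivial subsystem $0$ these exhaust the asserted list. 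The heart of the matter is that no further openings exist. The main obstacle is exactly this classification: the relevant strong Markov process is the planar pair $(X_1,X_2)$ on $\NRp^2$, whose hitting set of the union of the coordinate axes is $Z=Z_1\cup Z_2$, and because this target is not a single point, \ref{prop:subrandomset} does not apply verbatim. I would stratify the axes into the two open half-axes (giving the type-$A$ points $Z_1\setminus Z_2$ and the type-$B$ points $Z_2\setminus Z_1$) and the origin (giving the type-$C$ points $Z_1\cap Z_2$, which form the point-like zero set of the $\mathrm{BES}(d_1+d_2)$ process $\sqrt{X_1^2+X_2^2}$), and run the strong Markov and Blumenthal zero-one argument of \ref{prop:subrandomset} at the first hitting time of each stratum, propagating by stationarity and locality to conclude that $\varphi$ retains either all or none of the points of each type.

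Once this all-or-nothing dichotomy is established, closedness of $\varphi(Z)$ collapses the eight a priori combinations to the five openings above: keeping any type-$A$ point drags in the whole of $\overline{Z_1\setminus Z_2}=Z_1$ (the closure being all of $Z_1$ by the dimension gap), which already contains $Z_1\cap Z_2$, and symmetrically for $B$, so the only admissible configurations are $\emptyset$, $Z_1\cap Z_2$, $Z_1$, $Z_2$, $Z_1\cup Z_2$. Finally \ref{thm:Kahaneps} shows $Z_1\cap Z_2=\emptyset$ almost surely precisely when $d_1+d_2\ge2$, in which case the opening $Z_1\cap Z_2$ merges with $\emptyset$ and $\E^{d_1}\Utimes\E^{d_2}=\E^{d_1}\otimes\E^{d_2}$; when $d_1+d_2<2$ this opening is genuinely new, so $\E^{d_1}\Utimes\E^{d_2}$ is a proper subsystem. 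Since the lattice just computed is finite, hence of finite depth, and \ref{eq:emptyintersection} fails, the corollary following \ref{cor:alsoZhatdoesit} gives $\E^{d_1}\otimes\E^{d_2}\not\cong\E^{d_1}\Utimes\E^{d_2}$, completing the proof.
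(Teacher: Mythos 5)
Your first half (the isomorphism $\E^{d_1}\otimes\E^{d_2}\cong\E^{\M_{d_1}\ast\M_{d_2}}$) is correct and is in substance the paper's own argument: almost sure recovery of the pair $(Z_1,Z_2)$ from $Z_1\cup Z_2$ by local Hausdorff dimension, using $d_1\ne d_2$ together with the dimension drop of $Z_1\cap Z_2$ from \ref{thm:Kahaneps}. The same holds for your overall framework in the second half: type $\mathrm{II}_0$ by additivity of the index, subsystems corresponding to $\M_{d_1}\ast\M_{d_2}$-local stationary openings via \ref{prop:openingsubsystem}, the identification of the openings $\emptyset$, $Z_1\cap Z_2$, $Z_1$, $Z_2$, $Z_1\cup Z_2$ with the five listed subsystems, the collapse of the eight sign patterns by closedness of $\varphi(Z)$, and the non-isomorphism via the finite-depth corollary following \ref{cor:alsoZhatdoesit}. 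All of this matches the paper.

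The gap sits exactly at the step you call the heart of the matter. You correctly note that \ref{prop:subrandomset} does not apply verbatim because the hitting target is not a single point, but your remedy does not remove that obstacle for types $A$ and $B$: the half-axes are still not points. When the planar process $(X^{(d_1)},X^{(d_2)})$ first hits $\set{0}\times(0,\infty)$ after a rational time $q$, it does so at a random position $(0,y)$; by locality the event that this hitting time lies in $\varphi(Z)$ is a germ event of the process restarted at $(0,y)$, so the Blumenthal law only yields a \emph{conditional} dichotomy, i.e.\ a function $h(y)\in\set{0,1}$ of the hitting position. If $h$ is not almost surely constant, then $\varphi$ keeps some type-$A$ points and discards others, and your all-or-none conclusion fails; quasistationarity cannot exclude this, since the hitting position has the same law for every $q$, so the relevant indicator would simply be a nondegenerate Bernoulli variable for each $q$. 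Proving constancy of $h$ requires a further idea: by locality the decision depends only on $Z\cap[\tau,\tau+\varepsilon]$, which for small $\varepsilon$ is, with probability tending to one, a pure $Z_1$-configuration whose law is independent of $y$, so the question reduces to the opening $W\mapsto\varphi(W\cup\emptyset)$ acting on $\M_{d_1}$-typical sets. That reduction is precisely the paper's device: it defines $\varphi_1(W)=\varphi(W\cup\emptyset)$ and $\varphi_2(W)=\varphi(\emptyset\cup W)$, observes these are $\M_{d_i}$-local stationary openings and hence trivial by the already proven primitivity \ref{cor:besseltrivial}, and then propagates to mixed configurations by locality, since every point of $Z_1\setminus Z_2$ has a rational interval around it missing $Z_2$ on which $Z$ is a pure $Z_1$-configuration. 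Only the stratum $Z_1\cap Z_2$ is treated the way you propose, by a fresh application of \ref{prop:subrandomset}, which is legitimate there because it is the zero set of $\sqrt{(X^{(d_1)})^2+(X^{(d_2)})^2}$, a $\mathrm{BES}(d_1+d_2)$ process, at the single point $0$. With the restriction-to-factors argument substituted for your per-stratum Blumenthal claim, your proof closes and coincides with the paper's.
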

\begin{proof}
Since the index of Arveson systems is additive,   $\E^{d _1}\otimes\E^{d _2}$ is of type $\mathrm{II}_0$ again. Thus it   has only one onedimensional subsystem $u\otimes v$. The measure type related to this embedding is the distribution of $Z_1\cup Z_2$ under $\M_{d_1}\otimes\M_{d_2}$. 

Assume w.l.o.g.\ $d _1>d _2$. Then we can almost surely recover $Z_2$ via
\begin{displaymath}
  Z_2=\set{t\in Z_1\cup Z_2: \dim_{H}((Z_1\cup Z_2)\cap (s,s'))=1-\frac{d _2}2 \forall s,s'\in \mathbb{Q}, s<t<s'}.
\end{displaymath}
Further,
 \ref{eq:Kahanepure} and $d _1> \frac{d_1+d _2}2-1$ show that $Z_1\setminus Z_2$ must be dense near every point of $Z_1$. This gives $Z_1=\overline{(Z_1\cup Z_2)\setminus Z_2}$.  

We conclude that the distribution of $Z_1\cup Z_2$ is  measure isomorphic to the distribution of $(Z_1,Z_2)$ or $\E^{d _1}\otimes\E^{d _2}\cong\E^{\M_{d _1}\ast\M_{d _2}}$.

Moreover, every  $\M_{d _1}\ast\M_{d _2}$-local stationary opening $\varphi$ induces an $\M_{d _1}$-local stationary opening $\varphi_1$ and  
an  $\M_{d _2}$-local stationary opening $\varphi_2$ if one of the two sets is empty. That means  for $\M_{d _1}-$a.a. $Z_1$ and $\M_{d _2}$-a.a. $Z_2$
\begin{displaymath}
  \varphi(Z_1\cup \emptyset)=\varphi_1(Z_1)\qquad\mbox{and}\qquad\varphi(\emptyset\cup Z_2)=\varphi_2(Z_2).
\end{displaymath}
By \ref{prop:openingsubsystem}, each of the maps  $\varphi_1$ and $\varphi_2$ is either  almost surely the identity or  almost surely constant to the empty set. 

If $\varphi_1(Z)=\emptyset$ for all $Z$, locality implies  $\varphi(Z_1\cup Z_2)\cap (s,t)=\emptyset$ for all $s,t$ such that $Z_2\cap(s,t)=\emptyset$. If additionally $\varphi_2(Z)=\emptyset$ for almost all $Z$, we see $\varphi(Z_1\cup Z_2)\subseteq Z_1\cap Z_2$. Now observe that $Z_1\cap Z_2$ is the set of zeros of $(X^{(d _1)}_t,X^{(d _2)}_t)_{t\ge0}$.  Applying \ref{prop:subrandomset}  again yields either $\varphi(Z_1\cup Z_2)=Z_1\cap Z_2$ or $\varphi(Z_1\cup Z_2)=\emptyset$ almost surely.  In the former case, we obtain the subsystem $\E^{d_1}\otimes\E^{d_2}$. In the latter case we find the subsystem $\E^{d_1}\Utimes\E^{d_2}$.

If for almost all $Z$ $\varphi_1(Z)=\emptyset$ and   $\varphi_2(Z)=Z$ then $\varphi(Z_1\cup Z_2)\cap (s,t)= Z_2\cap(s,t)$ if $Z_1\cap(s,t)=\emptyset$ such that $\varphi(Z_1\cup Z_2)=Z_2$. The subsystem must be $\E^{d_1}\otimes v$. 

Similar arguments work for $\varphi_2(Z)=\emptyset$ giving the subsystem $u\otimes\E^{d_2}$.

It remains the case $\varphi_1(Z)=\varphi_2(Z)=Z$. Then monotonicity implies $\varphi(Z_1\cup Z_2)\supseteq\varphi(Z_1\cup\emptyset)=Z_1$ and, equally, $\varphi(Z_1\cup Z_2)\supseteq Z_2$ such that $\varphi(Z)=Z$. The subsystem is $u\otimes v$.

Therefore, only the 5 listed subsystems are possible.  \ref{thm:Kahaneps} gives the assertion. 
\end{proof}
\begin{remark}
  This result is very similar to \cite[Theorem 3.5]{Pow04a}. But there only  the ``diagonal'' case $d _1=d _2$ is considered. The only formal difference we see is the use of all positive contractive cocycles as invariant, whereas we deal with  projection valued cocycles (corresponding to Arveson  subsystems). In our examples, the space of  nontrivial positive contractive cocycles of $\E^d$ is onedimensional, \cite{Pow04a} gives at least an estimate of dimension 2. This  indicates that the two examples are nonisomorphic. But, different from us, \cite{Pow04a} does not compute all subsystems.  Of course it would be quite interesting to translate the QP-flows used by  \cite{Pow04a} and others  into the random-set picture by computing their Arveson system.

To complete the picture a bit more, we present a somewhat surprising result in the diagonal case. 
\end{remark}

The ``diagonal case'' $d_1=d_2=d$ is more involved since we cannot transform the situation into a question involving one random set in $[0,1]$. We need direct integrals dealing with the multiplicity issue of representations of abelian von Neumann algebras, here $L^\infty(\M)$ for the  measure type $\M$ coming from embedding $\G=u\otimes u \subset\E$, see \cite[section 6]{Lie09a}.  
This theory  gives us
\begin{displaymath}
 \E_t= \int^\oplus\mu(\d Z)H^t_{Z}
\end{displaymath}
for a measurable family of Hilbert spaces $(H^t_Z)_{Z\in \C_{[0,1]}}$ and some $\mu\in \M_{0,t}$. But, also the change of measures and the product of the Arveson system should play a r\^ ole. 

For general embeddings $\G\subseteq\E$, we look at     a measurable family of Hilbert spaces $H=(H^t_Z)_{t\ge0, Z\in\C_{[0,t]}}$ with
\begin{enumerate}
\item $H^t_\emptyset=\G_t$ for all $t\ge0$.
\item There are unitaries $(V^{s,t}_{Z,Z'})_{s,t\ge0,Z\in\C_{[0,s]},Z'\in\C_{[0,t]}}$, $\map{V^{s,t}_{Z,Z'}}{H^s_{Z}\otimes H^t_{Z'}}{H^{s+t}_{Z\cup (Z'+s)}}$ which fulfil the associativity condition 
\begin{equation}
\label{eq:associativity VstZsZt}
 V_{Z,Z'\cup (Z''+s)}^{r,s+t}\circ(\unit_{H_{Z}^r}\otimes V^{s,t}_{Z',Z''})= V_{Z\cup( Z'+r),Z''}^{r+s,t}\circ(V_{Z\cup (Z'+r)}^{r,s}\otimes\unit_{H_{Z''}^t})
\end{equation}
for all $r,s,t\ge0$ and  for $\M_{0,r}$-a.a.\ $Z$, $\M_{0,s}$-a.a.\ $Z'$ and $\M_{0,t}$-a.a.\ $Z''$.
\end{enumerate}
Define a family $\F=\sg t\F$ of Hilbert spaces, 
\begin{displaymath}
    \F_t=\set{\psi=(\psi_\mu)_{\mu\in\M_{0,t}}:\psi_\mu\in \int^\oplus\mu(\d Z)H^t_{Z},\psi_{\mu'}=U_{\mu,\mu'}\psi_\mu\forall \mu,\mu'\in\M_{0,t} }.
  \end{displaymath}
Again, the unitaries $U_{\mu,\mu'}$ are given through \ref{eq:defUmumus}. 
$\F$  is equipped  with product unitaries $\sg {s,t}W$, $\map{W_{s,t}}{\F_s\otimes\F_t}{\F_{s+t}}$, given through
\begin{displaymath}
\label{eq:multiplication on EMH}
    (W_{s,t}\psi\otimes \tilde \psi')_{\mu_{0,s}\otimes(\mu'_{0,t}+s)}(Z\cup (Z'+s))=V^{s,t}_{Z,Z'}{\psi_{\mu_{0,s}}(Z)}\otimes{\psi'_{\mu'_{0,t}}(Z')}.
  \end{displaymath}
Then $\F$ is an Arveson system, see \cite[Lemma 6.6]{Lie09a},  denote it by $\E^{\M,H}$. 

We need the following result
\def\citt{\cite[Theorem 6.7]{Lie09a}}  
\begin{theorem}[\citt]
  \label{th:directintegralps}
  Let $\E$ be an Arveson  system,  $\G\subseteq\E$ a subsystem and $\M=\M^\G$ the corresponding measure type. 

Then there exists a measurable family of Hilbert spaces $H=(H^t_Z)_{t\ge0, Z\in\C_{[0,t]}}$ such that
   $\E\cong \E^{\M,H}$  under an isomorphism respecting the natural actions of $J_{\G}(L^\infty(\M^\G))$ and $L^\infty(\M^\G)$. 
\end{theorem}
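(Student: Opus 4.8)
The plan is to combine the spectral (direct-integral) decomposition of the abelian von Neumann algebra $j_\G(L^\infty(\M^\G))$ from \ref{th:existencemuomega} with the factorisation structure of the measure type. First I would fix, for each $t\in[0,1]$, a representative $\mu_{0,t}\in\M^\G_{0,t}$ and apply von Neumann's reduction theory to the separable normal representation of $L^\infty(\mu_{0,t})$ on $\E_t$ obtained from the time-$[0,t]$ part of the $J_\G$-action, i.e.\ from the algebra generated by the $\mathrm{P}^\G_{a,b}$ with $[a,b]\subseteq[0,t]$. Since every separable representation of an abelian von Neumann algebra is a direct integral of copies of its defining multiplication representation, this yields a measurable field of multiplicity spaces $(H^t_Z)_{Z\in\C_{[0,t]}}$ with $\E_t\cong\int^\oplus\mu_{0,t}(\d Z)H^t_Z$ under which $j_\G(f)$ acts by multiplication. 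The field is unique up to measurable isomorphism once the measure class $\M^\G_{0,t}$ is fixed, and passing to an equivalent measure changes the identification only through the unitaries \ref{eq:defUmumus}, which is exactly the ambiguity absorbed in the definition of $\E^{\M,H}$.

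Next I would pin down the fibre over the empty set, i.e.\ property (i). The point $\emptyset\in\C_{[0,t]}$ is an atom of $\M^\G_{0,t}$, its mass being the $\omega$-value of $\mathrm{P}^\G_{0,t}=\mathrm{Pr}_{\G_t}\otimes\unit_{\E_{1-t}}$ and hence positive whenever $\G\ne0$; the associated minimal spectral projection is precisely the projection onto $\G_t$ inside $\E_t$. Normalising the decomposition accordingly gives $H^t_\emptyset=\G_t$.

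The heart of the argument is to transport the product unitaries $\sg{s,t}V$ of $\E$ through these decompositions. Here I would exploit the stationarity--factorisation identities $\M_{0,s+t}=\M_{0,s}\ast(\M_{0,t}+s)$: for $\mu_{0,s}\otimes\mu_{0,t}$-almost every pair $(Z_1,Z_2)$ the map $(Z_1,Z_2)\mapsto Z_1\cup(Z_2+s)$, inverted by restriction to $[0,s]$ and $[s,s+t]$, implements the measure isomorphism $\M_{0,s}\otimes\M_{0,t}\cong\M_{0,s+t}$. Because $V_{s,t}$ intertwines the two abelian algebras --- it carries each $\mathrm{P}^\G_{a,b}$ on $\E_{s+t}$ to the corresponding product of $\mathrm{P}^\G$-projections on $\E_s\otimes\E_t$ --- the uniqueness of the diagonalisation forces $V_{s,t}$ to be decomposable, producing a measurable field of fibre unitaries $\map{V^{s,t}_{Z_1,Z_2}}{H^s_{Z_1}\otimes H^t_{Z_2}}{H^{s+t}_{Z_1\cup(Z_2+s)}}$ that induces it. Feeding the associativity axiom of $\E$ into this decomposition and descending the resulting almost-everywhere operator identity on $\E_r\otimes\E_s\otimes\E_t$ to fibres --- again by uniqueness of the disintegration, now over the triple product measure type --- yields exactly \ref{eq:associativity VstZsZt}. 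Reassembling $H=(H^t_Z)$ gives $\E^{\M,H}$, and the decomposition isomorphism is the asserted $\E\cong\E^{\M,H}$, respecting the $J_\G$- and $L^\infty(\M^\G)$-actions by construction.

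The main obstacle I anticipate is measurability and coherence: one must produce the fields $(H^t_Z)$ and the fibre unitaries $V^{s,t}_{Z_1,Z_2}$ so that all objects are jointly measurable in their variables and the identities hold simultaneously for almost all sets, not merely for each fixed time. This requires a careful measurable-selection argument (von Neumann's selection theorem together with a countable cofinal family of partitions) and repeated Fubini arguments over the product measure types to convert global operator equations into fibrewise ones; keeping the choice of $(H^t_Z)$ consistent as $t$ varies is the genuinely technical point.
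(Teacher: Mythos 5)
The paper itself contains no proof of this statement---it is imported verbatim from \cite[Theorem 6.7]{Lie09a}---but your sketch reconstructs essentially the argument of that reference: diagonalise the abelian algebra $j_\G(L^\infty(\M^\G))$ on each $\E_t$ by direct-integral (reduction) theory, identify the fibre over the atom $\emptyset$ (of mass $\omega(\mathrm{P}^\G_{0,t})>0$) with $\G_t$, use decomposability of the unitaries $V_{s,t}$ over the a.e.\ bijection $(Z_1,Z_2)\mapsto Z_1\cup(Z_2+s)$ coming from $\M_{0,s+t}=\M_{0,s}\ast(\M_{0,t}+s)$ to obtain the fibre unitaries $V^{s,t}_{Z_1,Z_2}$ and the almost-everywhere associativity \ref{eq:associativity VstZsZt}, and finally carry out the measurable-field bookkeeping. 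This is the same approach, with the key steps correctly identified, so your proposal is in order.
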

 For the next result, let $ \mathbb{P}^1_{\mathbb{C}}$ denote the onedimensional complex projective space, i.e. the space of all onedimensional subspaces of $\mathbb{C}^2$.

\begin{theorem}
Suppose $d _1=d _2=d \ge1$.

Then $\M_{d }\ast\M_{d }=\M_{d }$ and thus $\E^d\otimes\E^d\not\cong\E^{\M_{d }\ast\M_{d }}$. 

Moreover,   $\E^d\otimes \E^d$ has infinitely many proper  subsystems: $0$, $u\otimes u$, and a continuum $(\E^z)_{z\in \mathbb{P}^1_{\mathbb{C}}}$ of subsystems isomorphic to $\E^d$. Thus, $\mathscr{S}(\E^d\otimes\E^d)$ has depth 4. 
\end{theorem}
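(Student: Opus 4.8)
The plan is to dispatch the three assertions in turn, the second and third of which hinge on the direct integral description of \ref{th:directintegralps} to cope with the multiplicity that the diagonal case forces upon us.

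First I would prove $\M_d\ast\M_d=\M_d$. Realising $Z_1$ and $Z_2$ as the closed ranges of two independent stable subordinators of the common index $\alpha=1-\frac d2$, their union $W=Z_1\cup Z_2$ is again a regenerative set and inherits the self-similarity of index $\alpha$ from its two constituents. A perfect self-similar regenerative subset of $\NRp$ of index $\alpha\in(0,1)$ is the range of a stable subordinator of that index (see \cite{Ber02}), so $W$ carries the measure type of a single Bessel-$d$ zero set; equivalently, by \ref{thm:Kahaneps} with $d+d=2d\ge2$ we have $Z_1\cap Z_2=\emptyset$, so $W$ is stationary factorising, has $\widehat W=W$, and constant local Hausdorff dimension $1-\frac d2$. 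Hence $\M_d\ast\M_d=\M_d$ and therefore $\E^{\M_d\ast\M_d}=\E^{\M_d}=\E^d$. Since $\E^d$ is primitive by \ref{cor:besseltrivial}, its subsystem lattice has depth $2$, whereas $\E^d\otimes\E^d$ will be seen to have depth $4$; in particular they are not isomorphic, which is the stated non-isomorphism.

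Next I would set up the fibre picture. Applying \ref{th:directintegralps} to the one-dimensional subsystem $\G=u\otimes u$, whose measure type is exactly $\M^{u\otimes u}=\M_d\ast\M_d=\M_d$, yields $\E^d\otimes\E^d\cong\E^{\M_d,H}$ over the single random set $W$. The bundle $H=(H^t_W)$ encodes precisely the information lost in passing from $(Z_1,Z_2)$ to $W$: because $Z_1\cap Z_2=\emptyset$, the pair is recorded by a two-valued colouring of $W$ telling, relative to the local time $L_t(W)$ of \ref{ex:bessel0}, which factor each point belongs to. Since the two factors are independent and identically distributed, rotating this colour degree of freedom by a unitary of the multiplicity space $\mathbb{C}^2$ produces automorphisms, and the coordinate subsystems $\E^d\otimes u$ and $u\otimes\E^d$ appear as the two pure-colour sub-bundles.

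For each line $z\in\mathbb{P}^1_{\mathbb{C}}$ I would let $\E^z$ be the coherent rank-one sub-bundle in which the colour is frozen to the direction $z$. This is a sub-product-system because coherence is preserved under the merging rule \ref{eq:associativity VstZsZt} for colourings, and being multiplicity-free with $\M^\U=\M_d$ it satisfies $\E^z\cong\E^{\M_d}=\E^d$ by \ref{prop:existencepsL1M}; the coordinate subsystems are recovered as $z=[1:0]$ and $z=[0:1]$, and distinct lines give an antichain. Conversely, any nonzero proper subsystem $\F'$ has a base random set which is an $\M_d$-local stationary opening of $W$; since $W$ has measure type $\M_d$ it may be realised as the zero set of a single Bessel-$d$ diffusion, so \ref{prop:subrandomset} together with \ref{prop:openingsubsystem} forces the base to be $\emptyset$ (whence $\F'\in\set{0,u\otimes u}$) or all of $W$. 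In the latter case $\F'$ is a full-base sub-bundle of $H$, and stationarity with the factorisation rule forces a rank-one such sub-bundle to be coherent, hence some $\E^z$, while a sub-bundle of rank $\ge2$ contains two distinct coherent directions and thus generates all of $H$, giving $\F'=\E^d\otimes\E^d$. So the proper subsystems are exactly $0$, $u\otimes u$, the family $(\E^z)_{z\in\mathbb{P}^1_{\mathbb{C}}}$, and $\E^d\otimes\E^d$, and the maximal chain $0\subsetneq u\otimes u\subsetneq\E^z\subsetneq\E^d\otimes\E^d$ shows $\mathscr{S}(\E^d\otimes\E^d)$ has depth $4$.

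The main obstacle is the middle step: pinning down the bundle $H$ closely enough to exhibit the two-dimensional colour multiplicity and, above all, proving the rigidity that every stationary factorising rank-one sub-bundle is coherent. This is exactly the multiplicity analysis of the representation of $L^\infty(\M_d)$ on $(\E^d\otimes\E^d)_1$ that is invisible in the multiplicity-free case $d_1\ne d_2$, and it is where the direct integral machinery of \cite[section 6]{Lie09a} must carry the weight; by comparison the probabilistic inputs — self-similarity of the union and the $0$--$1$ law for openings of $W$ — are routine.
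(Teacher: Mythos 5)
Your overall skeleton matches the paper's (direct integral over $W=Z_1\cup Z_2$ via \ref{th:directintegralps} applied to $\G=u\otimes u$, a $\mathbb{C}^2$-valued fibre datum recording the decomposition, dichotomies driven by primitivity of $\E^d$), but three steps are genuinely broken. First, your proof that $\M_d\ast\M_d=\M_d$ fails because $W=Z_1\cup Z_2$ is \emph{not} a regenerative set: at the first point of $W$ after time $t$, which lies a.s.\ in exactly one of the two sets (say $Z_1$), the conditional law of the future of $Z_2$ depends on the current value of the second Bessel process (equivalently on the age of its current excursion), so the future of $W$ is not independent of the past with a fixed law --- the same reason the superposition of two independent renewal processes is not a renewal process. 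Hence the characterisation ``perfect self-similar regenerative set $=$ range of a stable subordinator'' from \cite{Ber02} cannot be invoked. Your fallback list of properties (stationary factorising, $\widehat W=W$, local dimension $1-\frac d2$) comes nowhere near determining a measure type. The paper instead deduces $\M_d\ast\M_d=\M_d$ from the a.s.\ disjointness given by \ref{thm:Kahaneps} together with \cite[Proposition 4.20]{Lie09a}; disjointness is the essential input, and you use it only decoratively.

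Second, your continuum family does not exist as described: the sub-bundle with fibrewise \emph{constant} colour direction $z=[a:b]$ is a subsystem only for $z\in\set{[1:0],[0:1],[1:1]}$. Writing $G_{t,Z}=\mathbb{C}\,(1,\lambda(t,Z))$ for the pair of values at the two pure decompositions, the merging rule forces $\lambda(s+t,Z\cup(Z'+s))=\lambda(s,Z)\lambda(t,Z')$, and with $\lambda(t,\emptyset)=1$ a constant $\lambda=c$ must satisfy $c=c^2$. The coherence you assert is therefore not ``preserved under the merging rule''; the consistent rank-one families are exactly the local-time--twisted ones $G_{t,Z}=\mathbb{C}\,(1,\mathrm{e}^{wL_t(Z)})$, $w\in\mathbb{C}$, with $L_t$ as in \ref{ex:bessel0}, plus the two coordinate subsystems --- which also shows your claimed $U(2)$ colour-rotation automorphisms cannot exist, since they would map the coordinate subsystems onto frozen-direction bundles. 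Third, the rigidity statement you yourself flag as ``the main obstacle'' (every rank-one stationary factorising sub-bundle is coherent) is precisely the content of the paper's proof, not something the machinery of \cite[section 6]{Lie09a} delivers for free: after the $0$--$1$ dichotomies obtained from \ref{cor:besseltrivial}, the paper shows $\lambda$ is a measurable multiplicative functional, upgrades it to continuity using openings and \ref{prop:openingsubsystem}/\ref{prop:subrandomset}, and then identifies the resulting additive functional carried by $Z$ as a multiple of local time via \cite[Theorem 19.24]{Kal01}, exactly as in \ref{th:autEdtrivial}. Deferring that step means the classification --- and with it the count of subsystems and the depth-4 claim --- remains unproved.
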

\begin{proof}
  From   \ref{thm:Kahaneps} we know under $\M_d\otimes\M_d$ that $Z_1\cap Z_2=\emptyset$ a.s. Therefore,  \cite[Proposition 4.20]{Lie09a} shows $\M_d \ast\M_d =\M_d $.  This gives us another view on the random set $(Z_1,Z_2)\sim \M_d\otimes\M_d$ underlying the  Arveson system $\E^d\otimes\E^d$: we could condition on $Z=Z_1\cup Z_2$. If $\mathscr{L}(Z_1),\mathscr{L}(Z_2)=\mu$, we obtain the  conditional distribution of the pair $(Z_1,Z_2)$ given $Z_1\cup Z_2=Z$ as a stochastic kernel $q_\mu(\cdot|Z)$.  

Let us consider the direct integral representation of  $\E$. We derive by disintegration with respect to the measure $\mu\ast\mu\in\M_d$
\begin{displaymath}
  (\E^d\otimes\E^d)_t=\int_\oplus\mu\ast\mu(\mathrm{d} Z)H^t_{Z}
\end{displaymath}
with
\begin{displaymath}
  H^t_{Z}=L^2(q_\mu(\cdot|Z)).
\end{displaymath}
 Observe that for $\mu,\mu'\in \M_d$ the conditional distributions $q_\mu(\cdot|Z)$ and $q_{\mu'}(\cdot|Z)$ are equivalent for almost all $Z$.

Since  $Z_1\cap Z_2=\emptyset$ a.s., there is a partition $\mathfrak{t}=(t_1,\dots,t_n)\in \Pi^1$   such that
for all $i$ either $Z_1\cap[t_1+\dots+t_{i-1},t_1+\dots+t_{i}]=\emptyset$ or $Z_2\cap[t_1+\dots+t_{i-1},t_1+\dots+t_{i}]=\emptyset$. We describe this situation by $(Z_1,Z_2)\to\mathfrak{t}$.
  We could even choose the $t_i\in \mathbb{Q}$. Thus   there are only countably many choices of the partitions and pairs $(Z_1,Z_2)$ compatible with $Z_1\cup Z_2=Z$. We conclude that   $q_\mu(\cdot|Z)$ is a discrete measure. Further, $q_\mu(\set{(Z_1,Z_2)}|Z)=q_\mu(\set{(Z_2,Z_1)}|Z)$ since $\mu\otimes\mu$ is symmetric.

Now any Arveson  subsystem $\G$,  $u\otimes u\subset\G\subset \E$ is determined by Hilbert spaces $H'_{t,Z}\subseteq H_{t,Z}$ sharing the tensor products from the family $H$.  We introduce now the spaces 
\begin{displaymath}
  G_{t,Z}=\set{
    \begin{pmatrix}
      \psi(Z,\emptyset)\\\psi(\emptyset,Z)
    \end{pmatrix}
:\psi\in H'_{t,Z})}\subseteq \mathbb{C}^2,
\end{displaymath}
$t\in[0,1]$, $Z\in\C_{[0,t]}$.  By symmetry of $\mu\otimes\mu$, these spaces are independent from the choice of the measure $\mu\in \M_d$:
\begin{eqnarray*}
  U_{\mu\otimes\mu,\mu'\otimes\mu'}\psi(Z,\emptyset)&=&\sqrt{\frac{q_{\mu'}(\set{(Z,\emptyset)}|Z)}{q_{\mu}(\set{(Z,\emptyset)}|Z)}}\psi(Z,\emptyset)\\
  U_{\mu\otimes\mu,\mu'\otimes\mu'}\psi(\emptyset,Z)&=&\sqrt{\frac{q_{\mu'}(\set{(\emptyset,Z)}|Z)}{q_{\mu}(\set{(\emptyset,Z)}|Z)}}\psi(\emptyset,Z)=\sqrt{\frac{q_{\mu'}(\set{(Z,\emptyset)}|Z)}{q_{\mu}(\set{(Z,\emptyset)}|Z)}}\psi(\emptyset,Z).
\end{eqnarray*}

It is easy to see that the family $H'$ is uniquely determined by $G$. 
For, consider $Z$ distributed according to $\M_d$ and a partition $\mathfrak{t}\in \Pi^1$ like  mentioned above. Then
\begin{displaymath}
  H'_{1,Z}=H'_{t_1,Z_{0,t_1}}\otimes H'_{t_2,Z_{t_1,t_1+t_2}-t_1}\otimes\dots\otimes H'_{t_n,Z_{t_1+\dots+t_{n-1},t_1+\dots+t_n}-(t_1+\dots+t_{n-1})}
\end{displaymath}
So
\begin{displaymath}
  \set{(\psi(Z_1,Z_2))_{(Z_1,Z_2)\to\mathfrak{t}}: \psi \in H'_{1,Z}}\subseteq \mathbb{C}^{2^n},
\end{displaymath}
is fixed.  Varying $\mathfrak{t}$, we find that $H'_{1,Z}$ and consequently all $H'_{t,Z}$ are fixed by $G_{t,Z}$.

How does  $G_{t,Z}$  depend on $t$ and $Z$?      Of course, $G_{1,Z}=G_{1,Z+s} $ for all $s$, since $\G$ is a subsystem. 
Consider first
\begin{displaymath}
Q=\set{Z\in\C_{[0,1]}:
  \begin{pmatrix}
    \mathbb{C}\\1
  \end{pmatrix}
\cap G_{1,Z}\ne\emptyset}.
\end{displaymath}
 It is easy to see that $Z\in Q$ if and only if $ (Z_{0,t}\in Q)\wedge(Z_{t,1}\in Q)$. Also,  $Z\in Q$ if and only if $ Z+t\in Q$ $\M_d$-a.s. Thus $\chfc Q$ is the projection onto $\F_1$ for a subsystem of $\E^d$. By \ref{cor:besseltrivial} those are trivial and $\emptyset\in Q$, thus  either $Q=\set{\emptyset}$ or $Q=\C_{[0,1]}$ almost surely.    
A similar argument applies to $Q'=\set{Z\in\C_{[0,1]}:
  \begin{pmatrix}
    1\\\mathbb{C}
  \end{pmatrix}
\cap G_{1,Z}\ne\emptyset}$. 

If both $Q=Q'=\set{\emptyset}$, $G_{t,Z}=\set{
  \begin{pmatrix}
    0\\0
  \end{pmatrix}
}$ unless $Z=\emptyset$. This means $\G=u\otimes u$.

If $Q=\set{\emptyset}$, $Q'=\C_{[0,1]}$, $G_{t,Z}={
  \begin{pmatrix}
    0\\\mathbb{C}
  \end{pmatrix}
}$ unless $Z=\emptyset$. This means
\begin{displaymath}
  H'_{t,Z}=\set{\psi:\psi(Z_1,Z_2)=0\mbox{ unless }Z_1=\emptyset}
\end{displaymath}
The case $Q'=\set{\emptyset}$ and  $Q=\C_{[0,1]}$ is discussed similarly.

Now suppose  $Q=Q'=\C_{[0,1]}$, Let
\begin{displaymath}
Q''=\set{Z\in\C_{[0,1]}:\dim G_{1,Z}=1}.
\end{displaymath}
Again  we see that $Z\in Q''$ if and only if $ (Z_{0,t}\in Q'')\wedge(Z_{t,1}\in Q'')$. Furthermore,   $Z\in Q''$ if and only if $ Z+t\in Q''$ $\M_d$-a.s. So there are two possibilities: If $Q''=\set{\emptyset}$, $G_{t,Z}=\mathbb{C}^2$ unless $Z=\emptyset$. This means $H'_{t,Z}=H_{t,Z}$. Otherwise, $Q''=\C_{[0,1]}$ implies there is some $\lambda(t,Z)\in \mathbb{C}\setminus\set0$ such that $G_{t,Z}=\mathbb{C}
\begin{pmatrix}
  1\\\lambda(t,Z)
\end{pmatrix}
$.
Of course, $\lambda$ is stationary and fulfils almost surely 
\begin{displaymath}
  \lambda(1,Z)=\lambda(t,Z_{0,t})\lambda(1-t,Z_{t,1}-t)
\end{displaymath}
Since $\lambda$ has to be   measurable and $\lambda(t,\emptyset)=1$,  we find similar to \ref{th:autEdtrivial} some $w\in \mathbb{C}$  such that
\begin{displaymath}
  \lambda(t,Z)=\mathrm{e}^{w L_t(Z)}
\end{displaymath}
This completes the proof.  
\end{proof}
\begin{remark}
Clearly, the gauge group of $ \E^{d }\otimes\E^{d } $ is nontrivial. Nevertheless, this is already for $\E^{d }$ the case, see \ref{th:autEdtrivial}. Nevertheless, the gauge group of $ \E^{d }\otimes\E^d  $ is even not the direct square of the gauge groups. This resembles the type I case.  Loosely speaking, we would classify
\begin{center}
  \begin{tabular}[c]{ll}
    $\E^d$ &to be of type $\mathrm{II}_{\M_d,0}$
\\
    $\E^d\otimes\E^d$ &to be of type $\mathrm{II}_{\M_d,1}$
\\
    $\E^d\otimes\E^d\otimes \E^d$& to be of type $\mathrm{II}_{\M_d,2}$\\
&\vdots
  \end{tabular}
\end{center}
 We derive here some kind of conditional index. It is  given by $\dim G_{t,Z}-1$, which is essentially independent of $t$ and $Z$ as shown in the proof above. Even more, the sections $u^w_{t,Z}$, 
\begin{displaymath}
  u^w_{t,Z,\mu}(Z_1,Z_2)=c_{t,Z,\mu}\mathrm{e}^{wL_t(Z_1)}
\end{displaymath}
play the r\^ole of ``conditional units''.  For instance,  for carefully chosen constants $c_{t,Z,\mu}$ they fulfil
\begin{displaymath}
  u^w_{s,Z}\otimes u^w_{t,Z'}=u^w_{s+t,Z\cup (Z'+s)}.
\end{displaymath}
\end{remark}
\begin{remark}
This similarity with the Arveson system of  type $\mathrm{I}_1$ or  Arveson systems of type $\mathrm{II}_1$ as constructed from \cite[section 4.3]{Lie09a} gives rise to the following question: 
\begin{quote}
  Let  $\G_1,\G_2$ be two different but isomorphic subsystems of an Arveson system
  $\E$.

 Does there exist a $ \mathbb{P}^1_{\mathbb{C}}$-parametrised
  family of mutually different subsystems of $\E$, all of which are
  isomorphic to $\G_1,\G_2$?
\end{quote}
\end{remark}
\begin{remark}
  If $d <1$, we expect a more complicated structure and $\mathscr{S}(\E^d \otimes\E^d )$ to have again depth 5. Surely, there is a chain of length 5, but now we are not so sure about the subsystems ``between''  $\E^d\Utimes\E^d$ and $\E^d\otimes\E^d$.   At least there  seem to be parallels to  type $\mathrm{II}_1$ Arveson systems. 

Observe that the analysis of the \emph{spatial} product $\E^d\Utimes\E^d$ remains unchanged from the case $d\ge1$. 
\end{remark}

\end{document}